\pgfplotsset{compat=newest}
\newcommand{\hhookrightarrow}{\lhook\mkern-3mu\relbar\mkern-12mu\hookrightarrow}
\newtheorem{theorem}{Theorem}%[section]
\theoremstyle{definition} 
\theoremstyle{definition} 
\newtheorem{proposition}[theorem]{Proposition} 
\theoremstyle{definition} 
\theoremstyle{definition} 
\newtheorem{remark}[theorem]{Remark}
\theoremstyle{definition} 
\newtheorem{corollary}[theorem]{Corollary} 
\theoremstyle{definition}
\theoremstyle{definition} 
\theoremstyle{definition} 
\newcommand{\coleq}{\mathrel{\mathop:}=}
\newcommand{\lm}[1]{#1_0}
\newcommand{\st}[1]{#1_{\mathrm{st}}}
\newcommand{\sto}[1]{#1_{\mathrm{st},1}}
\newcommand{\stt}[1]{#1_{\mathrm{st},2}}
\newcommand{\sti}[1]{#1_{\mathrm{st},i}}
\newcommand{\de}[1]{#1_{\mathrm{de}}}
\DeclareMathOperator*{\argmin}{\mathrm{argmin}}
\title[On hysteresis-reaction-diffusion systems]{On hysteresis-reaction-diffusion systems: \\
Singular fast-reaction limit derivation and nonlinear hysteresis feedback.}
\author{Klemens Fellner, Christian M\"unch}
\address{Klemens Fellner \hfill\break
Institute of Mathematics and Scientific Computing, University of Graz, Heinrichstra{\ss}e 36, 8010 Graz, Austria}
\email{klemens.fellner@uni-graz.at}
\address{Christian M\"unch \hfill\break
	Technical University of Munich, Boltzmannstra{\ss}e 3, 85748 Garching bei M\"unchen, Germany}
\email{christian.muench@ma.tum.de}
\begin{document}

\subjclass[2010]{35K57, 47J40, 37B55, 35K51}
\keywords{Population Dynamics, Reaction-Diffusion System; Fast-Reaction Limit; Hysteresis Operator}
\begin{abstract}
This paper concerns a general class of PDE-ODE reaction-diffusion systems, 
which features a singular fast-reaction limit towards a reaction-diffusion equation coupled to a scalar hysteresis operator. 

As prototypical application, we present a PDE model for the growth of a population 
according to a given food supply coupled to an ODE for the turnover of a food stock.
Under realistic conditions the stock turnover is much faster 
than the population growth yielding an intrinsic scaling parameter. 
We present two models of consume rate functions 
such that the resulting food stock dynamics converges to a generalised play operator in the associated fast-reaction-limit. 
We emphasise that the structural assumptions on the considered PDE-ODE models are quite general and 
that 
analogue systems might describe e.g. cell-biological buffer mechanisms, where proteins are stored and used in parallel. 

Finally, we discuss an explicit example showing
that nonlinear coupling with a scalar generalised play operator can lead to spatially inhomogeneous large-time behaviour in a kind of hysteresis-diffusion driven instability.
\end{abstract}

\maketitle

%\begin{center}
%	MSC subject class: 
%\end{center}

\section{Introduction} 
We consider the evolution of a population density $u(t,x)$ depending on time $t\ge0$ and position $x\in \Omega$ within a sufficiently smooth domain $\Omega\subset \mathbb{R}^d$, $d\in \{1,2,3\}$.
The total number of individuals at any time $t\ge0$ is therefore given by
\[
N(t)\coleq \int_\Omega u(t,x) dx.
\]
Note the notational convention that small letters like $u(t,x)$ shall be used for spatial densities and individual rates, 
while capital letters like $N(t)$ shall denote total amounts.

The considered population is assumed to have equal access to an external 
food supply described by a given non-negative function $F(t)\ge0$.
For an arbitrary time $T>0$, the dynamics of the population is modelled by the following (nonlinear) PDE:
\begin{align}
\partial_t u -D\Delta u &= \lambda(N,F,S)\, u && \text{ in } [0,T]\times\Omega,\label{pop_dnymics_pop_evol1}\\
\partial_\nu u &= 0 &&\ \text{on } [0,T]\times\partial\Omega,\label{pop_dnymics_pop_evol2}\\
u(0) &= {u_{in}}&&\ \text{in } \Omega,\label{pop_dnymics_pop_evol3}
\end{align}
where $D$ is a diffusion coefficient and $\nu$ the outer unit normal. 
In particular, 
$\lambda(N,F,S)$ is a nonlinear, time-dependent 
population growth rate to be detailed later and which depends besides $N$ and $F$ also on 
the amount of food $S=S(t)$ stored in stock at time $t$. 
Note also that the evolution of total population size $N(t)$ is entailed from \eqref{pop_dnymics_pop_evol1}--\eqref{pop_dnymics_pop_evol3}:
\begin{equation}\label{pop_dnymics_pop_evol4}
\begin{aligned}
\dot{N}(t) &= \lambda N(t) &&\ \text{in } [0,T],   \\
N(0) &= {N_{in} \coleq \int_\Omega u_{in}(x) dx}.
\end{aligned}
\end{equation}

The evolution of the food stock $S(t)$ assumes (idealistically) that the population stores the entire unconsumed food into the stock at all times.
More precisely, %for a given suitable consume rate function $c=c(N,F,S)$ to be specified later,
the time change of the stock shall be the difference between food supply and the total food consumption of the population, which is proportional to a consume rate function $c=c(N,F,S)$ to be detailed later. Additionally, we suppose 
that the stock evolution can equally be written as a sum of a gain and a loss term 
(according to $\dot{S}>0$ or $\dot{S}<0$) which entails an additional constraint on the structure of admissible consume rate functions $c=c(N,F,S)$. Hence, we consider
\begin{align}
%\varepsilon \dot{S} &= F-Nc =  (F-Nc)_+ - (F-Nc)_-&&\ \text{ in } [0,T], \label{pop_dnymics_stock_evol1}\\
\varepsilon \dot{S} &= F-Nc =  {G(c,N,F) - l(c,N,F)S} &&\ \text{ in } [0,T], \label{pop_dnymics_stock_evol1}\\
S(0)&=S_{in}\ge0,\label{pop_dnymics_stock_evol2}
\end{align}
where $\varepsilon\ll 1$ represents the fast time scale of the stock turnover (in comparison 
to the population dynamics). 

The gain term on the right hand side of \eqref{pop_dnymics_stock_evol1} corresponds to 
situations where stock levels are increasing, i.e. $\dot{S}\ge0$ and we are interested in consume rate functions where$(F-Nc)_+$ equals a rate function $G$ with no dependence on current stock level $S$, 
i.e. 
\begin{equation}\label{G}
(F-Nc)_+ = G(c,N,F).
\end{equation}
This fits the assumption that all excess food is stocked independently of the stock level.  
%When splitting the right hand side of \eqref{pop_dnymics_stock_evol1} into gain and loss term,
%\begin{equation}\label
%\varepsilon \dot{S} = (F-Nc)_+ - (F-Nc)_-,
%\end{equation}

On the other hand, %when the stock is decreasing,  
the stock loss $(F-Nc)_-$ seems more realistically modelled proportional to the current stock size $S(t)$, i.e. 
\begin{equation}\label{lS}
(F-Nc)_-  = l(c,N,F)\,S,
\end{equation}
where $l$ denotes an averaged usage rate.
Note that assumption \eqref{lS} 
implies that the population will use 
an emtpying stock more carefully and also ensures that no food can be taken from an empty stock. 
Hence mathematically, the stock loss rate \eqref{lS} implies that the stock dynamics 
\eqref{pop_dnymics_stock_evol1} 
satisfies the quasi-positivity property, which 
yields $S(t)\ge0$ for all $t\ge0$  provided that $S_{in}\ge0$. 
Finally, note that at any time $t$, either $G(c(t),N(t),F(t))=0$, $l(c(t),N(t),F(t))=0$ or both functions are zero.
\medskip

The first result of the paper provides global existence of 
strong solutions for general consume and growth rate functions $c$ and $\lambda$ satisfying natural Lipschitz conditions. 

%We will first show existence for a general growth rate and discuss this special form of $\lambda$ later.

\begin{theorem}\label{Thm:pop_dnymics_existence}
For $T>0$, let the food supply function $F(t)$ be nonnegative in $\mathrm{W}^{1,\infty}(0,T)$ with
\[
F_{\max}:=\max_{t\in [0,T]}\{F(t)\}.
\]
%	Moreover, let
%	\[F(t)\geq 0\] for all $t\in [0,T]$.
Assume locally Lipschitz continuous consume and growth rate functions $c(S,N,F):\mathbb{R}^+ \times \mathbb{R}^+ \times \mathbb{R}^+ \rightarrow \mathbb{R}^+$ and $\lambda(S,N,F): \mathbb{R}^+ \times \mathbb{R}^+ \times \mathbb{R}^+ \rightarrow \mathbb{R}$, 
which can be extended to continuous and locally Lipschitz continuous functions on $\mathbb{R} \times \mathbb{R} \times \mathbb{R}^+$.
Consider nonnegative initial data $0\le S_{in}$ and $0\le u_{in}
\in \lbrace v\in \mathrm{H}^2(\Omega): \partial_\nu v = 0 \text{ on } \partial\Omega \rbrace$ 
with positive initial population size
\[
N_{in}=\int_{\Omega} u_{in}\, dx>0.
\] 
	
%	 in the case of $c_1(S,N,F)$, or $0\leq S_{in} \leq S_{\max}$ in the case of $c_2(S,N,F)$ be given.
	
Then, for any $\varepsilon>0$ fixed, the PDE-ODE system \eqref{pop_dnymics_pop_evol1}-\eqref{pop_dnymics_stock_evol2} has a unique solution $(u,S)$ with  
\begin{align*}
S&\in \mathrm{C}^1([0,T]), \\
u&\in \mathrm{C}([0,T];\mathrm{H}^2(\Omega))\cap\mathrm{C}((0,T];\mathrm{C}^{2,\beta}(\overline{\Omega}))\cap \mathrm{C}^{1}([0,T];\mathrm{L}^2(\Omega)) \cap \mathrm{C}^1((0,T];\mathrm{C}^{0,\beta}(\overline{\Omega})),
\end{align*}
where the H\"older exponent $0< \beta <1$ is determined by the Sobolev embedding of $\mathrm{H}^{s}(\Omega)$ into $\mathrm{C}^{0,\beta}(\overline{\Omega})$ for arbitrary $s\in (0,2)$.
Moreover, the solutions $(u,S)$ of \eqref{pop_dnymics_pop_evol1}-\eqref{pop_dnymics_stock_evol2} are nonnegative, i.e. 
\[
(u(t,x),S(t))\geq 0, \qquad 	\text{for all } t\in [0,T], \ x\in \overline{\Omega}.
\]
\end{theorem}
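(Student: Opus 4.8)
The plan is to exploit the fact that the growth rate $\lambda(S,N,F)$ is independent of the spatial variable $x$, which decouples the total population size and the stock from the spatial profile of $u$. Integrating \eqref{pop_dnymics_pop_evol1} over $\Omega$ and using the no-flux boundary condition \eqref{pop_dnymics_pop_evol2} to eliminate the diffusion term, one recovers \eqref{pop_dnymics_pop_evol4} and sees that $(N,S)$ solves the \emph{closed} ODE system
\begin{equation*}
\dot N = \lambda(S,N,F)\,N, \qquad \varepsilon\dot S = F - N\,c(S,N,F),
\end{equation*}
with initial data $(N_{in},S_{in})$. I would first solve this system, and only afterwards reconstruct the density $u$ from the resulting time-dependent coefficient.

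For the ODE system, the continuous and locally Lipschitz extension of $\lambda$ and $c$ to $\mathbb{R}\times\mathbb{R}\times\mathbb{R}^+$ makes the right-hand side locally Lipschitz, so Picard--Lindel\"of yields a unique local solution on a maximal interval $[0,T^*)$. To extend it to $[0,T]$ I would derive a priori bounds. Nonnegativity and boundedness of $S$ follow from the structural assumptions \eqref{G}--\eqref{lS}: at $S=0$ the loss term vanishes and $\varepsilon\dot S = G(c,N,F)\ge 0$ (quasi-positivity), while $\varepsilon\dot S = F - Nc \le F_{\max}$ together with $N,c\ge0$ gives $0\le S(t)\le S_{in}+F_{\max}t/\varepsilon$. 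Positivity of $N$ is immediate from $N(t)=N_{in}\exp(\int_0^t\lambda\,ds)>0$. The remaining and, I expect, most delicate point is an upper bound on $N$: since $S$ stays bounded and $F\le F_{\max}$, continuity of $\lambda$ bounds the growth rate along the trajectory, and a Gronwall estimate $N(t)\le N_{in}\exp(\Lambda t)$ then prevents finite-time blow-up, so $T^*>T$ and $S\in C^1([0,T])$.

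Once $(N,S)$ is known, the coefficient $\mu(t):=\lambda(S(t),N(t),F(t))$ is a fixed function in $W^{1,\infty}(0,T)$, and \eqref{pop_dnymics_pop_evol1}--\eqref{pop_dnymics_pop_evol3} becomes the \emph{linear} Neumann problem $\partial_t u - D\Delta u = \mu(t)u$. Because $\mu$ does not depend on $x$, the substitution $u(t,x)=\exp\!\big(\int_0^t\mu\,ds\big)\,v(t,x)$ reduces it to the pure heat equation $\partial_t v = D\Delta v$ with Neumann data and initial datum $u_{in}$. The Neumann Laplacian generates an analytic semigroup on $\mathrm{L}^2(\Omega)$ with domain $\{v\in \mathrm{H}^2:\partial_\nu v=0\}$, so $u_{in}\in D(A)$ gives $v\in C([0,T];\mathrm{H}^2)\cap C^1([0,T];\mathrm{L}^2)$; parabolic smoothing of the analytic semigroup together with the Sobolev embedding $\mathrm{H}^s\hookrightarrow C^{0,\beta}$ upgrades this to $v\in C((0,T];C^{2,\beta})\cap C^1((0,T];C^{0,\beta})$ for $t>0$, and the prefactor $\exp(\int_0^t\mu\,ds)\in C^1([0,T])$ transfers every statement to $u$. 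Nonnegativity of $u$ follows from positivity of the Neumann heat semigroup applied to $u_{in}\ge0$, and mass conservation $\int_\Omega v\,dx=N_{in}$ yields $\int_\Omega u\,dx=N_{in}\exp(\int_0^t\mu\,ds)=N(t)$, confirming consistency of the reconstruction. Uniqueness for the full system follows by running the argument backwards: two solutions produce, via $N=\int_\Omega u\,dx$, the same ODE data, hence the same $\mu$, and therefore the same $u$ by linear uniqueness. The main obstacle throughout is the global-in-time extension of the ODE solution, i.e.\ ruling out blow-up of $N$, which is precisely where boundedness of the growth rate is essential.
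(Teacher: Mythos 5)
Your decoupling strategy is genuinely different from the paper's proof, and most of it is sound. The paper treats the full PDE--ODE system as one coupled problem: it runs a Banach fixed-point argument on the mild formulation (in the variable $x=Au$, using fractional power spaces $X^\alpha$ of the Neumann Laplacian), and then upgrades regularity in several separate steps, including Schauder estimates for the Neumann Poisson problem to reach $\mathrm{C}^{2,\beta}(\overline{\Omega})$. You instead exploit that $\lambda$ and $c$ see $u$ only through $N=\int_\Omega u\,dx$, so that $(N,S)$ closes into a planar ODE system, after which the PDE is linear with an $x$-independent coefficient $\mu(t)$ and the substitution $u=e^{\int_0^t\mu\,ds}v$ reduces it to the Neumann heat equation. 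From there the stated regularity, nonnegativity (positivity of the heat semigroup, where the paper uses a testing argument with $\tilde u_-$), the consistency $\int_\Omega u\,dx=N(t)$, and uniqueness (any solution induces the same ODE data, hence the same $\mu$, hence the same $u$ by linear parabolic uniqueness) all follow cleanly; this makes the paper's Steps (II) and (IV) essentially free. What the paper's heavier machinery buys is robustness: the coupled fixed-point argument survives $x$-dependent $\lambda$ and more general couplings (cf.\ the paper's remark on spatially heterogeneous models), where your decoupling collapses.

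The one genuine gap is exactly where you locate the difficulty: the global bound on $N$. The step ``since $S$ stays bounded and $F\le F_{\max}$, continuity of $\lambda$ bounds the growth rate along the trajectory'' is circular, because $\lambda(S,N,F)$ depends on $N$ itself; boundedness of $S$ and $F$ does not bound $\lambda(S(t),N(t),F(t))$ unless $\lambda$ is bounded in its $N$-argument, which the hypotheses do not provide. Under the stated assumptions, $\dot N=\lambda(S,N,F)N$ can blow up in finite time: take $\lambda(S,N,F)=N$ and $c\equiv 0$ (so that the structural decomposition \eqref{G}--\eqref{lS} holds with $G=F$, $l=0$, and $S$ stays bounded on $[0,T]$), and $\dot N=N^2$ explodes at $t=1/N_{in}$. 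So your Gronwall step fails as written; note that the paper is equally thin at this point, asserting without proof in its Step (III) that the nonlinearities ``satisfy at most linear growth estimates along solutions''. To close the gap you need extra structure. For the growth rates the paper actually uses later, $\lambda_i=c_i/c_{\min}-1$, the stock equation itself supplies it: integrating \eqref{pop_dnymics_stock_evol1} and using $S\ge 0$ gives $\int_0^t Nc\,d\tau\le \varepsilon S_{in}+F_{\max}T$, whence
\[
N(t)\le N_{in}+\frac{1}{c_{\min}}\int_0^t Nc\,d\tau\le N_{in}+\frac{\varepsilon S_{in}+F_{\max}T}{c_{\min}},
\]
after which your argument goes through verbatim. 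In the stated generality you should either impose a one-sided growth condition such as $\lambda(S,N,F)\le C(1+S+F)$, or boundedness of $\lambda$ on sets where $S$ and $F$ are bounded, or restrict to growth rates tied to $c$ as above.
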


\medskip

The main aim of this paper is to rigorously perform the singular fast-reaction limit $\varepsilon\to 0$. 
Indeed, populations reproduce on a time-scale of months yet  stock turnover happens on a time scale of several hours, which implies $\varepsilon =  O(10^{-3})$. 

%Throughout, we suppose that the evolution of the population happens much slower than the evolution of the stock.
%The time scale difference is described by the small parameter $\varepsilon>0$ in \eqref{pop_dnymics_stock_evol1}.
%For example in the case of humans we scale the reproduction rate by lets say months (although it takes much longer to give birth to a child than to die due to starving).
%
%For storing, we consider the time scale of minutes.
%
%Then $\varepsilon$ is given by 
%\[
%\varepsilon = \frac{1}{30*24*60} \approx 2,31 * 10^{-5}.
%\]
%The smaller $\varepsilon$ is, the more interesting it appears to take the limit $\varepsilon \rightarrow 0$.

The following Theorem \ref{Thm:singular_limit} proves in the limit $\varepsilon\to 0$ that solutions $(u_\varepsilon,S_\varepsilon)$ to system 
\eqref{pop_dnymics_pop_evol1}-\eqref{pop_dnymics_stock_evol2} converge under 
natural choices and assumptions on growth- and consume rate functions
$\lambda$ and $c$ to solutions  $(u_0,S_0)$
to a PDE for $u_0$ coupled to a hysteresis (i.e. generalised play) operator for $S_0$.
\medskip

The key structural assumptions of Theorem \ref{Thm:singular_limit} which are responsible 
for the limiting hysteresis operator concern the 
consume rate functions $c$, in particular the dependency on the stock level $S$
and the individual food supply 
\begin{equation*}
f:=\frac{F}{N}.
\end{equation*} 
More precisely, 
we consider consume rate functions, which implement a 
partition of the $f$-$S$-phase space in terms of upper and lower threshold functions
$U(f)$ and $L(f)$ enclosing the area where $\dot{S}=0$, see Fig. \ref{pop_pic_limit_phase_diagram_1} for a prototypical example. 

The precise definitions of $U$ and $L$ in terms of the modelling of the consume rate function $c$
will be stated in Section \ref{consume}. 
At this point, we only remark that both types of consume rate functions $c_1$ and $c_2$ detailed in Section \ref{consume}
are defined as a concatenation of three areas in the $f$-$S$-phase space corresponding to 
$\dot{S}<0$, $\dot{S}=0$ and $\dot{S}>0$:
The first case details the use of the stock by the population, when the individual food supply $f=\frac{F}{N}$ is small, 
i.e. when the population is large in comparison to the food supply rate $F$. The third case considers the opposite 
case when $f$ is large and the population stores into the stock. Finally, the second case 
refers to medium values of $f$, which leave the stock levels unchanged. 

Note that Theorem \ref{Thm:singular_limit} holds for both types of consume rate functions $c_1$ and $c_2$, 
which are detailed in Section \ref{consume}. However, for consume rate function $c_1$, which allows 
in some situations for unbounded stock levels, it requires 
additional assumptions which ensure that $S(t)$ 
is uniformly bounded by a maximal value $S_{\max}$, see Proposition \ref{c1Smax}.

\begin{theorem}[Singular limit to PDE-hysteresis system]\label{Thm:singular_limit}\hfill\\
Suppose the assumptions from Theorem~\ref{Thm:pop_dnymics_existence}.
Let $c_1$ and $c_2$ be the consume rate functions as defined in Section \ref{consume} and consider 
the corresponding growth rate functions 
$$
\lambda_i(S,N,F)= \left(\frac{c_i(S,N,F)}{c_{\min}}-1\right), \qquad i\in \{1,2\}.
$$
For $c_1$ suppose furthermore the assumptions of Proposition \ref{c1Smax}.
Let $U(f)$ and $L(f)$ be as sketched in Fig. \ref{pop_pic_limit_phase_diagram_1}
and defined in detail in Section \ref{consume}, where the maximal stock level $S_{\max}$ 
is given either by the definition of $c_2$ or in 
Proposition \ref{c1Smax} in the case of $c_1$. 
	
Let $(\lm{u}, \lm{S}, \lm{N})$ and $\lm{f}:=F/\lm{N}$ be the unique solution of
\begin{alignat}{2}
	\partial_t \lm{u} -D\Delta \lm{u} &= \left(\frac{c_i(\lm{S},\lm{N},F)}{c_{\min}}-1\right)\lm{u} &&\qquad \text{a.e. in } (0,T)\times \Omega,\label{pop_limit_pop1}\\ 
	\partial_\nu \lm{u} &= 0 ,&&\qquad \text{a.e. in } (0,T)\times\partial\Omega,\label{pop_limit_pop2}\\ 
	\lm{u}(0) &= u_{in}&&\qquad \text{a.e. in } \Omega\label{pop_limit_pop3}\\
	N_{in} &= \int_\Omega \lm{u}(0,x) dx,\\
	\lm{S}(0)&= \min \{\max \{L(\lm{f}(0)) , S_{in}\} ,{U}	(\lm{f}(0))\},\label{pop_limit_stock1}\\
	\dot{\lm{S}}(t)(\lm{S}(t)-z) &\leq 0,\qquad \text{ for all } z\in [{L}(\lm{f}(t)),{U}(\lm{f}(t))]&& \qquad \text{a.e. in }[0,T], \label{pop_limit_stock2}\\
	\lm{S}(t)&\in [{L}(\lm{f}(t)),{U}(\lm{f}(t))] &&\qquad \text{in }[0,T].\label{pop_limit_stock3}
\end{alignat}
	
Then,  for $2\leq q <\infty$ arbitrary, $\lm{u}$ has the same regularity as $u_\varepsilon$ in Theorem \ref{Thm:pop_dnymics_existence} and
$\lm{S}$ is in $\mathrm{W}^{1,\infty}(0,T)$.
Moreover, in the limit $\varepsilon\to 0$ 
\begin{align*}
	u_{\varepsilon} &\rightarrow \lm{u}\qquad \text{in } \mathrm{W}^{1,q}(0,T;\mathrm{L}^2(\Omega))\cap\mathrm{L}^{q}(0,T;\mathrm{H}^2(\Omega))\text{ and}\\
	S_{\varepsilon} &\rightarrow \lm{S} \qquad \text{in } \mathrm{L}^{q}(0,T) 
\end{align*}  
and $S_{\varepsilon}(t)$ and $\lm{S}(t)$ are uniformly bounded in $t$ and $\varepsilon$ by 
\begin{equation*}
	S_\varepsilon(t), \lm{S}(t) \leq S_{\max},\qquad\forall t\in[0,T],\ \forall\varepsilon>0.
\end{equation*}
%	This implies 
%	\begin{equation*}
%	 \leq S_{\max}.
%\end{equation*}
	
Note that the limit $\lm{S}$ is a generalised play operator with input $\lm{f}$ for the curves ${U}(f)$ and
${L}(f)$.
In fact, Figure~\ref{pop_pic_limit_phase_diagram_1} depicts the behaviour in case of consume rate function $c_2$. 
\end{theorem}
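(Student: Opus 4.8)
The plan is to decouple the proof into a uniform parabolic-regularity part for the population density $u_\varepsilon$ and a monotonicity/energy part for the stock $S_\varepsilon$, and then to identify the limit of $S_\varepsilon$ with the generalised play operator applied to $\lm{f}=F/\lm{N}$. First I would collect the a priori bounds that are uniform in $\varepsilon$. By the quasi-positivity discussed after \eqref{lS} one has $S_\varepsilon\ge 0$, and by Proposition~\ref{c1Smax} (for $c_1$), respectively the construction of $c_2$, the crucial uniform bound $0\le S_\varepsilon(t)\le S_{\max}$ holds for all $t$ and all $\varepsilon$. Since $\lambda_i=c_i/c_{\min}-1$ is then bounded in $\mathrm{L}^\infty$, the total mass $N_\varepsilon$, which solves $\dot N_\varepsilon=\lambda_i N_\varepsilon$ with $N_{in}>0$, stays between two positive constants on $[0,T]$, uniformly in $\varepsilon$; hence $f_\varepsilon=F/N_\varepsilon$ is uniformly bounded and uniformly Lipschitz in $t$ (using $F\in\mathrm{W}^{1,\infty}$). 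Inserting the $\mathrm{L}^\infty$-bound on the reaction term into maximal $\mathrm{L}^q$ parabolic regularity yields uniform bounds for $u_\varepsilon$ in $\mathrm{W}^{1,q}(0,T;\mathrm{L}^2(\Omega))\cap\mathrm{L}^q(0,T;\mathrm{H}^2(\Omega))$.

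Next comes compactness and the limit of the PDE part. The uniform bounds give a weakly-$*$ convergent subsequence $u_\varepsilon\rightharpoonup\lm{u}$ in these reflexive spaces; by the Aubin--Lions--Simon lemma (the compact embedding $\mathrm{H}^2\hookrightarrow\hookrightarrow\mathrm{H}^{2-\delta}$ together with the uniform time-derivative bound) the convergence is strong in $\mathrm{L}^q(0,T;\mathrm{H}^{2-\delta}(\Omega))$ and in $\mathrm{C}([0,T];\mathrm{L}^2(\Omega))$. Consequently $N_\varepsilon\to\lm{N}$ uniformly and, since $\lm{N}$ is bounded below, $f_\varepsilon\to\lm{f}:=F/\lm{N}$ uniformly on $[0,T]$. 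This furnishes the strong convergence of the input fed into the stock dynamics.

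The main obstacle is the convergence of $S_\varepsilon$, because $\dot S_\varepsilon=\varepsilon^{-1}(F-N_\varepsilon c_i)$ carries no uniform time-regularity. Here I exploit the structural monotonicity built into the consume rate functions of Section~\ref{consume}: the map $S\mapsto g_\varepsilon(S,t):=F-N_\varepsilon c_i(S,N_\varepsilon,F)$ is non-increasing and vanishes exactly on the band $[L(f_\varepsilon),U(f_\varepsilon)]$. Let $\tilde S_\varepsilon$ denote the generalised play operator applied to the input $f_\varepsilon$ (existence, uniqueness and Lipschitz dependence on the input being part of the classical theory of the play operator), so that $\tilde S_\varepsilon$ satisfies \eqref{pop_limit_stock2}--\eqref{pop_limit_stock3} with $\lm{f}$ replaced by $f_\varepsilon$ and with $\tilde S_\varepsilon(0)$ the projection of $S_{in}$ onto the initial band as in \eqref{pop_limit_stock1}. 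Testing the difference with $w_\varepsilon:=S_\varepsilon-\tilde S_\varepsilon$ and using $g_\varepsilon(\tilde S_\varepsilon,t)=0$ gives
\begin{equation*}
\frac{d}{dt}\tfrac12 w_\varepsilon^2=\tfrac1\varepsilon\,w_\varepsilon\bigl(g_\varepsilon(S_\varepsilon,t)-g_\varepsilon(\tilde S_\varepsilon,t)\bigr)-w_\varepsilon\,\dot{\tilde S}_\varepsilon.
\end{equation*}
The first term is nonpositive by monotonicity of $g_\varepsilon$, and in the regime where the second (``bad'') term is positive --- i.e. when $\tilde S_\varepsilon$ sits on the band boundary while $S_\varepsilon$ lies on the opposite side --- the $\varepsilon^{-1}$ damping is active and, by the off-band behaviour of $c_i$, dominates after a Young estimate. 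This closes a Gronwall inequality of the form $\|w_\varepsilon\|_{\mathrm{L}^2}^2\lesssim \varepsilon+|w_\varepsilon(0)|^2 e^{-ct/\varepsilon}$, and combining with the Lipschitz bound $\|\tilde S_\varepsilon-\lm{S}\|_\infty\lesssim\|f_\varepsilon-\lm{f}\|_\infty\to 0$ (again by play operator theory) yields $S_\varepsilon\to\lm{S}$. The initial-layer contribution $|w_\varepsilon(0)|^2 e^{-ct/\varepsilon}$ --- present because $S_\varepsilon(0)=S_{in}$ need not lie in the band whereas $\lm{S}(0)$ is its projection, cf. \eqref{pop_limit_stock1} --- is small in $\mathrm{L}^q(0,T)$ but not uniformly near $t=0$, which is exactly why the convergence is stated in $\mathrm{L}^q(0,T)$ and not in $\mathrm{C}([0,T])$.

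Finally I would pass to the limit in the coupled system and close the argument. With $S_\varepsilon\to\lm{S}$ in $\mathrm{L}^q(0,T)$, $N_\varepsilon\to\lm{N}$ uniformly and $u_\varepsilon\to\lm{u}$ strongly, the Lipschitz continuity of $\lambda_i$ lets one pass to the limit in the reaction term $\lambda_i(S_\varepsilon,N_\varepsilon,F)u_\varepsilon$, so that $\lm{u}$ solves \eqref{pop_limit_pop1}--\eqref{pop_limit_pop3} and inherits, via maximal parabolic regularity, the same regularity as $u_\varepsilon$; that $\lm{S}\in\mathrm{W}^{1,\infty}(0,T)$ and $\lm{S}\le S_{\max}$ follows from $\lm{S}$ being the play operator of the Lipschitz input $\lm{f}$ and from $U\le S_{\max}$. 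Since the limit system (PDE coupled to the play operator through $\lm{f}=F/\lm{N}$) is itself well-posed --- uniqueness following from a Gronwall estimate that combines well-posedness of the PDE with Lipschitz continuity of the play operator --- the full family, and not merely a subsequence, converges. The hardest and most delicate step is the stock estimate: controlling the sign of the term $-w_\varepsilon\dot{\tilde S}_\varepsilon$ against the singular damping while simultaneously handling the input perturbation $f_\varepsilon\to\lm{f}$ and the initial boundary layer.
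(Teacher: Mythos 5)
Your architecture is coherent and genuinely different from the paper's in the one step that matters. The paper never introduces the auxiliary play $\tilde S_\varepsilon$ of the perturbed input $f_\varepsilon$: instead it projects the \emph{actual} trajectory onto the band via the operator $p_\varepsilon(S_\varepsilon,f_\varepsilon)$ of \eqref{p}, proves a uniform $\mathrm{W}^{1,\infty}$ bound for $p_\varepsilon$, and then uses only the \emph{sign} relation $\dot S_\varepsilon(S_\varepsilon-p_\varepsilon)\le 0$ to bound $\dot S_\varepsilon(S_\varepsilon-p_\varepsilon)$ in $\mathrm{L}^1$, multiply by $\varepsilon$, and conclude $S_\varepsilon-p_\varepsilon\to 0$ a.e.\ and hence in $\mathrm{L}^q$; the identification of the limit as a generalised play is then done by a direct phase-plane verification of the variational inequality \eqref{pop_limit_stock2}--\eqref{pop_limit_stock3} (Step 4), with Visintin's theorem invoked only for uniqueness. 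Your route replaces that qualitative analysis by a quantitative energy comparison with the exact play of $f_\varepsilon$ plus Lipschitz stability of the play operator, which, if it worked as stated, would even yield rates. Your Steps on a priori bounds, Aubin--Lions compactness, passage to the limit in the reaction term, and uniqueness-implies-whole-sequence all match the paper's Steps 1, 3--6 in substance.

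The gap is in the Gronwall step, precisely where you flag the difficulty. Your inequality $\|w_\varepsilon\|^2\lesssim \varepsilon+|w_\varepsilon(0)|^2e^{-ct/\varepsilon}$ presupposes a coercive damping $w_\varepsilon\,g_\varepsilon(S_\varepsilon,t)\le -\tfrac{c}{\varepsilon}w_\varepsilon^2$ off the band, but for the concrete $c_1,c_2$ the drift \emph{degenerates} at the band boundary: above the band $\varepsilon\dot S_\varepsilon=-l\,S_\varepsilon$ with $l=1-e^{-N(1-f/c_{\mathrm{de}}(S))}\to 0$ as $S\downarrow U(f)$, and near the corner $f\approx c_{\min}$ (where $U=0$, so the prefactor $S_\varepsilon$ is itself the distance to the band) one only gets $|g_\varepsilon|\gtrsim \mathrm{dist}(S_\varepsilon,[L,U])^2$; the same quadratic degeneracy occurs for $c_2$ near $S_{\max}$ through the factor $(1-e^{S-S_{\max}})$. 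Consequently the boundary layer decays algebraically, not exponentially, and the bad-regime plateau is $O(\sqrt{\varepsilon})$ rather than $O(\sqrt{C\varepsilon})$ from a clean Young absorption --- still enough for $\mathrm{L}^q$ convergence, but not the inequality you wrote. Worse, when $S_\varepsilon$ and $\tilde S_\varepsilon$ both sit strictly inside the band at different values, $g_\varepsilon(S_\varepsilon)=g_\varepsilon(\tilde S_\varepsilon)=0$ and $\dot{\tilde S}_\varepsilon=0$, so $\tfrac{d}{dt}w_\varepsilon^2=0$: the discrepancy is frozen, not damped, and cannot obey a bound whose right-hand side decays in $t$. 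To close your argument you must (i) prove the quantitative off-band lower bound $|g_\varepsilon|\ge c\,\mathrm{dist}^2$ from the explicit formulas, using $N_\varepsilon\ge\delta(T)$ and $c_{\mathrm{de}}'\ge c>0$ on $[0,S_{\max}]$, and (ii) track that interior discrepancies are only \emph{created} at the end of boundary-sliding episodes (hence inherit the $O(\sqrt\varepsilon)$ bound) and merely persist afterwards --- bookkeeping the single differential inequality does not encode. Also note a small inexactness: $g_\varepsilon$ does not vanish \emph{exactly} on $[L(f_\varepsilon),U(f_\varepsilon)]$ where $U$ is truncated at $S_{\max}$ (for $c_1$ the zero set extends up to $c_{\mathrm{de}}^{-1}(f_\varepsilon)>S_{\max}$); this is harmless thanks to $S_\varepsilon\le S_{\max}$ but should be stated. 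The paper's sign-only projection argument avoids all of this at the cost of the longer qualitative identification in Step 4 --- that is exactly the trade-off between the two approaches.
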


%we consider growth and consume rate functions of the follow kind: First, we assume  
%a minimal positive consume rate $c_{\min}>0$ to be necessary in order to keep the population growing (while is will shrink otherwise).
%In particular, we consider growth rate functions only depending on the consume rate function $c=c(N,S,F)$ in the following way 
%$$
%\lambda(N,S,F) \sim \lambda(c/c_{min}) 
%$$
%while a form $\lambda(N,S,F) \sim \lambda(c-c_{min})$ can be equally treated. 
%Note that the dependence of $\lambda$ on $N$, $S$ and $F$ only occurs indirectly via $c(N,S,F)$.
%
%Secondly, we shall suppose consume rate functions $c(N,S,F)$, which allow to prove boundedness of the 
%stock level $S(t)$ in the limit $\varepsilon\rightarrow 0$.
%In particular, we model two consume rate functions $c_1(N,S,F)$ and $c_2(N,S,F)$ such that there exist $S_{\max}\ge S_{in}\ge0$ with
%$$
%0\leq S_\varepsilon \leq S_{\max}, \qquad \text{on} \quad  [0,T], \qquad \text{for }i\in\{1,2\}.
%$$
%%Since $S_{in}\leq S_{\max}$ we now for arbitrary $\varepsilon$ that $0\leq S_\varepsilon \leq S_{\max}$ on $$.
%%We consider a sequence $\{\varepsilon\}$ which tends to zero.
%Moreover, we define $f(N,F)=F/N$ as the food rate per individual at time $t$. 
%
%Finally, we will present in Section \ref{consume} the detailed modelling of two consume rate functions 
%$c_1(N,S,F)$ and $c_2(N,S,F)$. However, for the limit $\varepsilon\rightarrow 0$, we only require the following structural 
%properties  for $i\in\{1,2\}$:

\begin{figure}
	\begin{tikzpicture}[] 
	%\draw[step=.5cm, gray, very thin] (0,0) grid (10,5); 
	\draw[->] (0,0) -- (10,0) coordinate (x axis);
	\draw (-1pt,5 ) node[anchor=east,fill=white] {${S}$};
	\draw (10 cm,-1pt) node[anchor=north] {${f}$};
	\draw[->] (0,0) -- (0,5) coordinate (y axis);
	\draw (1 cm,1pt) -- (1 cm,-1pt) node[anchor=north] {$c_{\min}$};
%	\draw (5 cm,1pt) -- (5 cm,-1pt) node[anchor=north] {$\st{c}$};
%	\draw (2.5 cm,1pt) -- (2.5 cm,-1pt) node[anchor=north] {$\tilde{c}_1$};
	\draw (1pt,3 ) -- (-1pt,3 ) node[anchor=east,fill=white] {$S_{\max}$};
	\draw [blue, thick, domain=0:1] plot (\x, {0});
	\draw [blue, thick, domain=1:2.5] plot (\x, {(\x-1)/(3-\x)});
	\draw [blue, thick, domain=2.5:8] plot (\x, {(2.5-1)/(3-2.5)});
	\draw [blue] (2 cm,1cm) node[anchor=east] {${U}$};
	\draw (1.7 cm,1.5cm) node[anchor=east] {$\dot{S}<0$};
	\draw (4.9 cm,1.5cm) node[anchor=east] {$\dot{S}=0$};
	\draw (8.8 cm,1.5cm) node[anchor=east] {$\dot{S}>0$};
	\draw [<-] [black, domain=2.1:2.3] plot (\x, {(\x-1)/(3-\x)});
	\draw [red, thick, domain=5:((2.5-1)/(3-2.5)+5)] plot (\x, {(\x-5});
	\draw [red] (6cm +3pt,1cm+1pt) node[anchor=west] {${L}$};
	\draw [red, thick, domain=((2.5-1)/(3-2.5)+5):9.5] plot (\x, {(2.5-1)/(3-2.5)});
	\draw [red, thick, domain= 1:5] plot (\x, {0});
	\draw [->] [black, domain=6:6.5] plot (\x, {(\x-5});
	\draw [<->] [black, domain=(8.6):9.0] plot (\x, {(2.5-1)/(3-2.5)});
	\draw [<->] [black] (4.5,3) -- (6.5,3);
	\draw [<->] [black] (0.3,0) -- (0.7,0);
	\draw [<->] [black] (3.5,2) -- (5.5,2);
	\draw [<->] [black] (2.5,1) -- (4.5,1);
	\draw [<->] [black] (2,0) -- (4,0);
	%c_{\min}=1, c_{\mathrm{Fa}}= 3, \sti{c}=5, \tilde{c}=2.5, S_{\max}=(2.5-1)/(3-2.5)
	\end{tikzpicture}
	\caption{${f}$-${S}$-phase diagram: upper and lower thresholds $U$ and $L$}
	\label{pop_pic_limit_phase_diagram_1}
\end{figure}
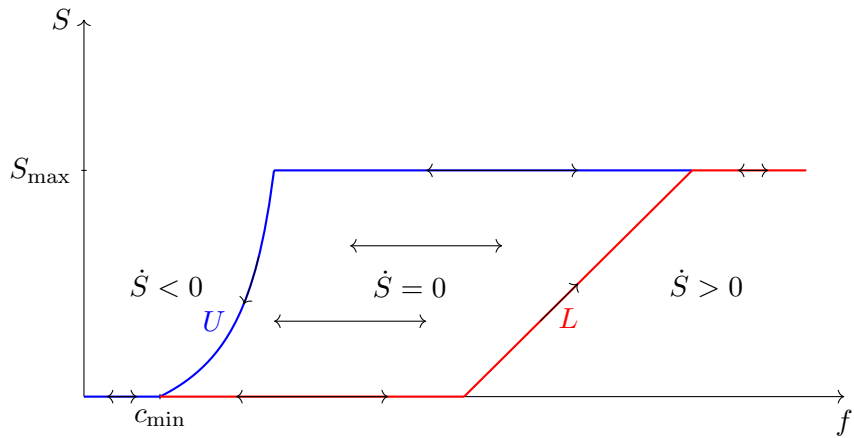

%\textcolor{red}{Existence/uniqueness of limiting system}
%\textcolor{blue}{
\begin{remark}[Remarks on the existence and uniqueness of limiting system]\label{Rem:Exist/UniqueLimit}\hfill\\
The existence and uniqueness of solutions for \eqref{pop_limit_pop1}--\eqref{pop_limit_stock3} are part of the results of Theorem~\ref{Thm:singular_limit}. Existence follows by showing that a limit $(\lm{u},\lm{S}, \lm{N})$ of $(u_\varepsilon,S_\varepsilon,N_\varepsilon)$ solves \eqref{pop_limit_pop1}--\eqref{pop_limit_stock3}.
The limit $\lm{S}$ is a generalised play operator for the Lipschitz continuous curves ${U}$ and $L$ with input $\lm{f}=F/\lm{N}$.
By \cite[III.2. Theorem 2.2]{visintin2013differential}, this generalised play is a Lipschitz continuous hysteresis operator from $\mathrm{C}([0,T])\times \mathbb{R}$ to $\mathrm{C}([0,T])$. 
An alternative, direct approach to show existence uses a fixed point argument similar to \cite[Thm.~3.1]{muench1}.
	Crucial for the proof of uniqueness is an estimate of the form
	\begin{equation*}
	\lm{N}(t)>\delta(T)>0,
	\end{equation*}
	uniformly in $[0,T]$.
	The latter follows from the at most exponential decay of $\lm{N}$ and boundedness of the interval $[0,T]$.
	Therefore, $\lm{f}=F/\lm{N}$ is a Lipschitz continuous function and
	uniqueness of $(\lm{u},\lm{S},\lm{N})$ follows with a Gronwall argument.
\end{remark}
%}

\begin{remark}[Generalisation to reaction-diffusion-system-ODE/hysteresis models]\label{Rem:Generalization}\hfill\\
The semi-group based existence theory of Theorem \ref{Thm:pop_dnymics_existence} 
can be extended to suitable nonlinear reaction-diffusion systems (see e.g.  \cite[Chp.~6]{pazy}),
where the reaction terms describe, for instance, nonlinear interactions between different species. 
One example could be competition for food. 	
For multi-species models, the total population size $N(t)$ at time $t$ for $m$ species is given by
\[
N(t) = \sum_{i=1}^m \int_\Omega u_i(t,x) dx.
\]
Using this definition as total population size, we expect that analogue results to Theorem \ref{Thm:singular_limit}
can be proven via similar arguments. Finally, growth rate functions of the form 
$\lambda(N,S,F) \sim \lambda(c-c_{min})$ can be treated analog as long as  
the dependence of $\lambda$ on $N$, $S$ and $F$ only occurs indirectly via $c(N,S,F)$.
\end{remark}

%\textcolor{red}{Generalisation to $x$-dependent consume (and growth) functions describing e.g. a heterogeneous domain $\Omega$.} 
%\textcolor{blue}{
\begin{remark}[Generalisation to spatially heterogeneous models]\hfill\\
A generalisation of Theorem~\ref{Thm:pop_dnymics_existence} and Theorem~\ref{Thm:singular_limit} to $x$-dependent consume (and growth) functions describing e.g. a heterogeneous domain $\Omega$ can be interesting for many applications. 
In general, such generalisations to $x$-dependent functions are straight forward. 
Note that the modelling has to be adapted if the space-dependency involves an $x$-dependent consume rate function $c$.
More precisely, all terms $F-Nc$ which appear in the evolution of the scalar stock level $S$ have to be replaced by some functional in $c$, for example by $F-\int_\Omega uc\, dx$.

Then, similar proofs as stated in this paper apply also to spatially heterogeneous models as long as all 
functions are sufficiently regularity in $x$ and satisfy the correct boundary conditions. 
For example, if $x\mapsto\lambda(N,F,S,x)\in C_0^\infty(\Omega)$ holds for all fixed $N,F,S$, then $\lambda(N,F,S,\cdot)u(\cdot) \in \lbrace v\in \mathrm{H}^2(\Omega): \partial_\nu v = 0 \text{ on } \partial\Omega \rbrace$ for all $u\in \lbrace v\in \mathrm{H}^2(\Omega): \partial_\nu v = 0 \text{ on } \partial\Omega \rbrace$, which is essential for the proof of Theorem~\ref{Thm:pop_dnymics_existence}.
In this case, all other required regularity conditions during the proofs can be proven without much effort.
Finally, we remark that also the evolution equation for $N$ needs changing if $\lambda$ is $x$-dependent:
\begin{equation*}
	\dot{N}(t) = \int_{\Omega}\lambda(N(t),F(t),S(t),x)u(x,t)\,dx.
\end{equation*}
	The proof for positivity of $u_\varepsilon$ does not require a uniform positive lower bound of $N_\varepsilon$.
Hence, 
	\begin{align*}
	\dot{N}(t) 
%	&= \int_{\Omega}\lambda(N(t),F(t),S(t),x)u(x,t)\,dx\\
	\geq -\|\lambda(N(t),F(t),S(t),\cdot)\|_{\mathrm{L}^\infty(\Omega)}\int_{\Omega}u(x,t)\, dx
	=-\|\lambda(N(t),F(t),S(t),\cdot)\|_{\mathrm{L}^\infty(\Omega)}N(t),
	\end{align*}
which implies uniform positive lower bounds for $N_\varepsilon$ and $\lm{N}$ on $[0,T]$.
The latter is crucial in the proof of Theorem~\ref{Thm:singular_limit}, cf. Remark \ref{Rem:Exist/UniqueLimit}.
\end{remark}
%}

\noindent\underline{Outline:}
The proof of existence and regularity in Theorem~\ref{Thm:pop_dnymics_existence} 
for general consume and growth rate functions $c$ and $\lambda$ and for fixed $\varepsilon>0$
adapts well-known methods of semigroup theory (see e.g. \cite{henry,pazy}) and is therefore postponed 
to the final Section~\ref{existence}.
More specific reference on hysteresis-reaction-diffusion models and fast-reaction hysteresis limits are e.g. 
\cite{GR15b,GR15,PKKCPR12,TAD14}
and the references therein. 

%In Section , we provide the existence and regularity theory stated in Theorem \ref{Thm:pop_dnymics_existence}
%for general consume and growth rate functions $c$ and $\lambda$ and for fixed $\varepsilon>0$. 
%Moreover, we show that $u_\varepsilon(.,t)$ and $S_\varepsilon(.)$ remain non-negative at any time.
%This will first be done for a general consume rate function $c(t)$.
%We do not need to consider the stock in this case in order to model the population dynamics, since the dependence on $S$ only comes in with the concrete consume rate function $c(S,N,F)$.	
%The evolution equation for the stock is already determined by $c(t)$, $F(t)$ and the evolution of $N(t)$.

In the following Section~\ref{consume}, we present two heuristic models for consume rate functions $c_1(S,N,F)$ and $c_2(S,N,F)$, which satisfy the general regularity assumptions of  Theorem~\ref{Thm:pop_dnymics_existence}.
The first model of a consume rate function implements an upper bound for a maximal consume rate. 
As a consequence the stock can build up arbitrarily large. However, under additional assumption on the 
food supply $F(t)$, we will show that the stock remains bounded along the corresponding solutions. 
As alternative model, we also discuss a consume rate function which enforces
an upper maximal bound for the stock level $S(t)$ by forcing the consume rate to become arbitrarily large as the stock 
level gets close to its maximum level. 

In Section~\ref{limit}, we show our main Theorem~\ref{Thm:singular_limit}, i.e. we
prove rigorously the limit  $\varepsilon\to 0$. 
The proof generalises a related previous result of ODE-hysteresis systems \cite{kuehn2017generalized}.

In Section~\ref{examples}, we present and discuss  numerical examples.
A first example illustrates Theorem~\ref{Thm:singular_limit} and plots a typical temporal evolution of $F(t)$, $N(t)$ and $S(t)$ in the hysteresis-reaction-diffusion model \eqref{pop_limit_pop1}--\eqref{pop_limit_stock3} and also provides a simulation video (supplementary material).
A second example is constructed in terms of a simplest nonlinear hysteresis-reaction-diffusion model 
and demonstrates the remarkable observation that the shape of a scalar generalised play operator 
can decide between spatial homogenisation or unbounded growth of spatially inhomogeneous Fourier modes, 
a phenomenon which could be called hysteresis-diffusion driven instability.

\section{Two models of consume rate functions}\label{consume}

%Based on peoples evaluation of the current situation, we model two consume rate functions.

In this section, we derive two general heuristic models for consumption rate functions, which each leads to a generalised play operator in the singular limit $\varepsilon\to 0$.

The first model assumes a maximal consume rate $c_{\max}>0$, which entails, however,
a possible pile up of an infinitely large stock in case of unbounded food supplies.
The second model of a consume rate function ensures that the stock is bounded by some maximum value $S_{\max}>0$ 
by assuming that the population consumes all excess food, which would increase the stock beyond $S_{\max}$.
One justifying interpretation of such an assumption could be 
the tendency to waste resources when the stock reaches its maximum level $S_{\max}$.

\subsection{A bounded consume rate function without stock limitation}\label{cbounded}\hfill\\
In the following, we derive an autonomous consumption rate $c_1(S,N,F)$
as a function, which only depends on time via its arguments $S(t),N(t),F(t)$. 
Hence, we will suppress the time-dependency in the notation of $c_1(S,N,F)$. 
%The dependence of the consume function on time only comes in by the fact that the input functions $S(t),N(t),F(t)$ depend on time.

The function $c_1$ shall be a prototypical model for a uniformly bounded consumption rate function, i.e. we postulate a maximal possible individual consumption rate $c_{\max}>c_{\min}$ (recall that $c_{\min}>0$ denotes the minimal consumption rate required for the growth rate function $\lambda$ to be nonnegative, i.e. the population declines for $c_1<c_{\min}$) such that 
\[
0\leq c_1(S,N,F) \leq c_{\max}.
\]
As a consequence, at times when the individual food supply $f(t) = F(t)/N(t)> c_{\max}$ it is certain that not all food can be consumed
and that the unconsumed food will increase the stock level $S(t)$, which is thus potentially unbounded. 

\medskip
%Moreover, recall that the population does not decline and the growth rate function $\lambda$ is nonnegative 
%as long as the consume rate is at least $c_{\min}$.

In the following, we model $c_1(S,N,F)$ as the concatenation of three regimes in the $f$-$S$-phase plane, 
see Fig. \ref{Fig:f-Sdiagram_UnboundedStock}.
Note that while these three regions are only characterised by the two variable $(S,f)$, the actual values of $c_1$ will also depend on $N$. 
\begin{description}[topsep=5pt, leftmargin=6mm]
\item[Depleting regime]
The depleting regime refers to the left-sided area of the $f$-$S$-phase plane, where the individual food supply $f$ is  close to $c_{\min}$ (or below) and the population 
resorts to supplies from the stock, 
yet depending on the available stock level $S$:
We fix first an intermediate supply level $\de{c}^\infty\in(c_{\min},c_{\max})$
and consider situations where the individual food supply 
$f$ satisfies 
\[
c_{\min}\le f \le \de{c}^\infty< c_{\max},
\]
which means that the total population $N$ is quite large in comparison to the available food supply $F$.

Hence, we postulate an upper threshold of the depleting regime (i.e. the value of $f$ under which the population uses the stock) as a monotone increasing function $\de{c}(S) \in [c_{\min},\de{c}^\infty]$, see the left side of Figure~\ref{Fig:f-Sdiagram_UnboundedStock}.
%that the population will hesitate stronger to  the lower the stock level $S$.
%Therefore, we introduce  of the individual food supply $f(N,F)$, which serves as a threshold under which the population decides to take additional food from the stock. We shall call this range the \textbf{depleting regime}, see the left side of Figure~\ref{Fig:f-Sdiagram_UnboundedStock}. %for a generic $f-S$ diagram of $c_{\mathrm{FaDe}}(S)$. 
If the stock is empty, it is natural to set $\de{c}(0)=c_{\min}$, which is the critical rate of consumption below which the population will decline. 
A prototypical choice for $\de{c}(S)$ is the function
\begin{equation}\label{cde}
	\de{c}(S)=\frac{\de{c}^\infty S + c_{\min}}{S+ 1},
\end{equation}
which saturates at $\de{c}^\infty $ in the limit $S\to\infty$,
yet any strictly monotone $C^1$ function connecting 
$(c_{\min},0)$ and $(\de{c}^\infty,\infty)$ will yield equivalent results. 
Note that $\de{c}^\infty $ is the 
asymptotically largest consumption rate below which the population enters the depleting regime and can be interpreted as a measure for how careful
the population deals with the stock. 

%In the depleting regime, we further distinguish two sub-regimes depending on the current stock level $S$: 
%The fuller the stock $S$ the less hesitate the population will be to use the stock. Therefore, we  
%postulate a monotone increasing critical value function $c_{\mathrm{FaDe}}(S) \in [c_{\min},\de{c}]$ of the individual food supply $f(N,F)$, which serves as a threshold under which the population decides to take additional food from the stock. If the stock is empty, it is natural to set $c_{\mathrm{FaDe}}(0)=c_{\min}$, which is the critical value for $f(N,F)$ below which the population will decline. Hence, a suitable choice for $c_{\mathrm{FaDe}}(S)$ is the function
%\begin{equation*}
%	c_{\mathrm{FaDe}}(S)=\frac{\de{c}S + c_{\min}}{1+S},
%\end{equation*}
%which saturates in the limit $S\to\infty$ to the largest consumption level $\de{c}$ of the depleting regime. 
%See the left side of Figure~\ref{Fig:f-Sdiagram_UnboundedStock} for a generic $f-S$ diagram of $c_{\mathrm{FaDe}}(S)$. 
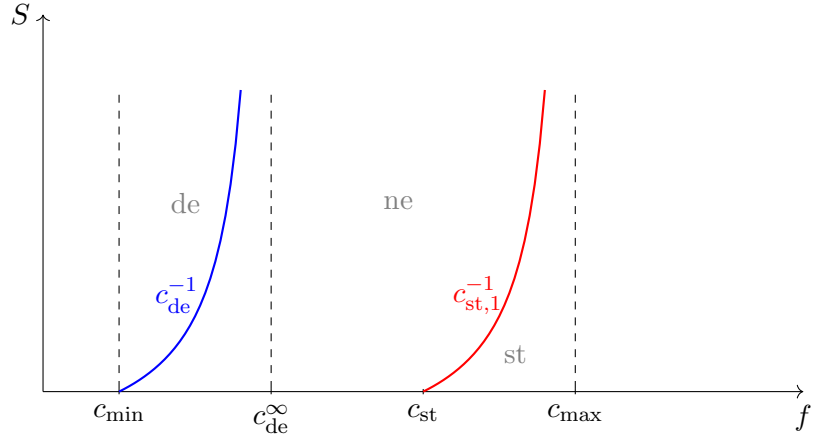
\begin{figure}[htb]
	\begin{tikzpicture}[] 
	\draw[->] (0,0) -- (10,0) coordinate (x axis);
	\draw (-1pt,5 ) node[anchor=east,fill=white] {$S$};
	\draw (10 cm,-1pt) node[anchor=north] {$f$};
	\draw[->] (0,0) -- (0,5) coordinate (y axis);
	\draw (1 cm,1pt) -- (1 cm,-1pt) node[anchor=north] {$c_{\min}$};
	\draw (5 cm,1pt) -- (5 cm,-1pt) node[anchor=north] {$\st{c}$};
	\draw (3 cm,1pt) -- (3 cm,-1pt) node[anchor=north] {$\de{c}^\infty $};
	\draw[dashed] (1 cm,1pt) -- (1 cm,4) ;
	\draw[dashed] (3 cm,1pt) -- (3 cm,4) ;
	\draw (7 cm,1pt) -- (7 cm,-1pt) node[anchor=north] {$c_{\max}$};
	\draw [blue, thick, domain=1:2.6] plot (\x, {(\x-1)/(3-\x)});
	\draw [gray] (2.2 cm,2.5cm) node[anchor=east] {$\mathrm{de}$};
%	\draw [gray] (3cm + 2pt,0.5cm) node[anchor=east] {$\mathrm{FaRe}$};
	\draw [gray] (5 cm,2.5cm) node[anchor=east] {$\mathrm{ne}$};
	\draw [blue] (2cm+5pt,1cm+8pt) node[anchor=east] {$\de{c}^{-1}$};
	\draw [red, thick, domain=5:6.6] plot (\x, {(\x-5)/(7-\x)});
	\draw [red] (6cm +5pt,1cm+8pt) node[anchor=east] {$\sto{c}^{-1}$};
%	\draw [gray] (6.2 cm,2.5cm) node[anchor=east] {$\sti{c}Re}$};
	\draw [gray] (6.5cm,0.5cm) node[anchor=east] {$\mathrm{st}$};
%	\draw[dashed] (5 cm,1pt) -- (5 cm,4) ;
	\draw[dashed] (7 cm,1pt) -- (7,4) ;
	\end{tikzpicture}
\caption{$f$-$S$-phase diagram for a consume rate function without stock limitation. The depleting regime ($\mathrm{de}$) is the area left to $S=\de{c}^{-1}$. The feasting regime ($\mathrm{st}$) is located to the right of $S=\sto{c}^{-1}(f)$.
The neutral consumption regime ($\mathrm{ne}$) corresponds to the area between $\de{c}^{-1}$ and $\sto{c}^{-1}$.}
	\label{Fig:f-Sdiagram_UnboundedStock}
\end{figure}

In the depleting regime, %the individual food supply $f$ has fallen under the threshold $\de{c}(S)$, i.e.  
%$f=F/N < \de{c}(S)$ and 
the stock consumption yields (recall \eqref{pop_dnymics_stock_evol1}): 
\begin{align}
	\varepsilon\dot{S}=- l(S,N,F)\, S = \left[f - c_1(S,N,F)\right]N \le 0\label{pop_stock_take}
\end{align}
for all $S\geq 0$. 
We aim to define a bounded consume rate function $c_1$ in such a way that the rate function $l$ is also bounded by a constant (which we will assume normalised w.l.o.g. due to a possible time rescaling), i.e. 
\begin{equation}\label{lbound}
0\leq l(S,N,F)\leq 1.
\end{equation}
%The situation is depicted in Figure~\ref{Fig:f-Sdiagram_UnboundedStock}.
%\textcolor{red}{Refer to f-S diagram. Introduce regimes into figure.}
Note that because of \eqref{pop_stock_take} and \eqref{lbound}, we have to choose $c_1$ such that for $S\geq 0$, $F\geq 0$ and $N>0$ satisfying $f< \de{c}(S)$ holds
\[
	- S \leq - l S = \left[f - c_1(S,N,F)\right] N \leq 0.
\]
These two inequalities lead to two constraints in the choice of $c_1$: %for $f(N(t),F(t)) < c_{\mathrm{FaDe}}(S(t))$ 
%and the rate $l$:
\begin{align*}
	f  \leq c_1(S,N,F)\leq  f +  \frac{S}{N}, 
\end{align*}
which requires in particular in the limit $S\rightarrow 0$
\begin{equation*}
 c_1(S,N,F) - f = O\Bigl(\frac{S}{N}\Bigr) \qquad \text{and}\qquad
 c_1(0,N,F)=f.
\end{equation*}
Note that the latter condition ensures $\dot S=0$ whenever $S=0$ in \eqref{pop_stock_take}. 
%Hence, in the depleting and depleting sub-regime, 
%That is, for all states $(N,F)$  such that  $f(N,F) < c_{\mathrm{FaDe}}(S)$, 
%we want 
It further entails 
\[
	c_1(0,N,F)=c_{\min}=\de{c}(0) \qquad \text{if}\qquad f= c_{\min}.
\]
Finally, we wish for $c_1(S,N,F)$ to be continuous in all variables and increasing in $S$.

%	if
%	\[
%	f(N,F)= c_{\min}.
%	\]
	%	and 
	%	\[
	%	c_{\min}(0,N)= f
	%	\]
	%	for $N\geq N^0_-(0)$.
	
%	If the stock is rather empty, the population tries not to tough it as long as possible.\\
%	We choose for $c_{\mathrm{FaDe}}(S)$ the function
	%	For this choice we obtain
	%	\begin{align*}
	%	N^0_-(S)=\frac{F(1+S)}{\de{c}S + c_{\min}}.
	%	\end{align*}
	
All these requirements are satisfied by the following  prototypical model for a consumption function $c_1(S,N,F)$ in the depleting regime $f\leq \de{c}(S)$: \begin{equation*}
	c_1(S,N,F)=f + \frac{S}{N}\left(1-e^{-N \left(1-{f}/{\de{c}(S)}\right)}\right)
\qquad \text{for} \qquad f\leq \de{c}(S).
\end{equation*}
This implies 
\[
	l(S,N,F)  = 1-e^{-N \left(1-{f}/{\de{c}(S)}\right)}\le1\qquad \text{for} \qquad f\leq \de{c}(S).
\]
	%	such that 
	%	\[- l S<0
	%	\]
	%	for $N> N^0_-(S)$ and
	%	\[
	%	- l S = 0
	%	\]
	%	for $N< N^0_-(S)$.\\
	
\item[Storing regime]
As second regime, we consider the reverse situation when the food supply $F$ is large compared to the total population $N$. To define this regime, we introduce an upper individual consumption level $\st{c}$, such that 
\[
	\de{c}^\infty <\st{c}<c_{\max}
\]
and consider the situation when $f \geq \st{c}$. Recall that $f\geq c_{\max}$
entails that not all food can be consumed. 
However, depending on the current stock level $S$, we postulate that the population decides to store food whenever the individual food supply $f$ surpasses a lower threshold value $\sto{c}(S)< c_{\max}$, cf. Figure~\ref{Fig:f-Sdiagram_UnboundedStock}.
We call this range the \textbf{storing regime}.
The storing threshold $\sto{c}(S)$ is modelled (similar to $\de{c}(S)$)  as a monotone increasing function of $S$ such that
\[
\st{c} =  \sto{c}(0) \leq  \sto{c}(S) < c_{\max}. 
\]
One interpretation of $\st{c}$, which equals the storing threshold at empty stock $\sto{c}(0)$, 
can be as a measure of how optimistic the population feels for future times.
Moreover, since not all supplied food can be consumed for $f\ge c_{\max}$, it is natural to set
\[
	\lim_{S\to\infty} \sto{c}(S)= c_{\max}.
\]
%The more is already in the stock the closer $c_{\sti{c}St},1}(S)$ tends towards $c_{\max}$.\\
%Because we assume an infinitely large stock we model $c_{\sti{c}St},1}(S)$ such that
%	as $$.
Hence, a suitable heuristic choice for $\sto{c}(S)$ is
\begin{align}\label{st1}
	\sto{c}(S)=\frac{\st{c} + c_{\max}S}{1+S},
\end{align}
yet again any such strictly monotone increasing $C^1$ function will yield equivalent results. 
Next, a prototypical (and simplest) model for $c_1$ in the storing regime is
\begin{align*}
	c_1(S,N,F)=\sto{c}(S)=\frac{\st{c} + c_{\max}S}{1+S}\qquad \text{for}\qquad f \geq \sto{c}(S),
\end{align*}
which means that $c_1$ saturates at the level of $\sto{c}(S)$ for all food supply rates $f \geq \sto{c}(S)$.
	%	
	%	The critical value $N^0_+$, for which
	%	\[
	%	F= c_{\max}(S) N,
	%	\]
	%	is then given by
	%	\[
	%	N^0_+(S)=\frac{F}{c_{\max}(S)}= \frac{F(1+S)}{\sti{c} + c_{\max}S}\downarrow \frac{F}{c_{\max}}
	%	\]
	%	as $S\rightarrow \infty$.\\
	%	For $N(t)< N^0_+(S(t))$, we have
This readily determines
\[
	G(S,N,F)= \left(f -\sto{c}(S)\right)N\qquad \text{for} \qquad f \geq \sto{c}(S).
\]

\item[Neutral consumption regime]
For a medium individual food supply, i.e.
\[
	\de{c}(S) <f< \sto{c}(S),
\]
we assume that the population decides neither to
add nor to use food from the stock, i.e. $G(S,N,F)=l(S,N,F)=0$, cf. Figure~\ref{Fig:f-Sdiagram_UnboundedStock}.
Because of 
\begin{align*}
	\dot{S}&=0 = G(S,N,F)- l(S,N,F)\, S= \left(f - c_1(S,N,F)\right) N
\end{align*}
the only choice for $c_1$ is $c_1=f$ in this regime.
\end{description}
\medskip

Combining those three regimes, our model of a bounded consume rate function $c_1$ reads as
\begin{equation}\label{c_1}
c_1(S,N,F)=
\begin{cases}
f + \frac{S}{N}\left(1-e^{-N \left(1-{f}/{c_{\scriptscriptstyle \mathrm{de}}(S)}\right)}\right) &\qquad \text{if } f \leq \de{c}(S),\\
f &\qquad \text{if } \de{c}(S) <f< \sto{c}(S),\\
\sto{c}(S) &\qquad \text{if } f \geq \sto{c}(S).
\end{cases}
\end{equation}
The functions $G(S,N,F)$ and $l(S,N,F)$ are accordingly given by
\begin{equation}\label{G_1}
G(S,N,F)=
%\left\lbrace \begin{matrix}
\begin{cases}
0 &\qquad \text{if } f< \sto{c}(S),\\
\left(f -\sto{c}(S)\right)N &\qquad \text{if } f \geq \sto{c}(S),
\end{cases}
%\end{matrix}
%\right.
\end{equation}
and
%\begin{align*}
%l(S,N)&=\left\lbrace \begin{matrix}
%\frac{F}{S} \left( \exp\left[\left(\frac{F(1+S)}{\de{c}S + c_{\min}} -N\right)S\right]-1 \right) && \text{if } f \leq \frac{\de{c}S + c_{\min}}{1+S},\\
%0 && \text{if } F> N\frac{\de{c}S + c_{\min}}{1+S}.
%\end{matrix}
%\right.
%\end{align*}

\begin{equation}\label{l_1}
l(S,N,F)=
%\left\lbrace \begin{matrix}
\begin{cases}
1-e^{-N \left(1-{f}/{c_{\scriptscriptstyle \mathrm{de}}(S)}\right)} &\qquad \text{if } f \leq \de{c}(S),\\
0 &\qquad \text{if } f> \de{c}(S).
\end{cases}
\end{equation}

%Note that $l(S,N)$ is also well defined for $S=0$.
%By de hospital'S rule we obtain
%\begin{align*}
%l(0,N)&:=\lim_{S\downarrow 0}l(S,N) = \left\lbrace \begin{matrix}
%F\left( \frac{F}{c_{\min}} - N\right) && \text{if } f \leq c_{\min},\\
%0 && \text{if } F> Mc_{\min}.
%\end{matrix}
%\right.
%\end{align*}

\subsection{An unbounded consume rate function ensuring limited stock}\label{cunbounded}\hfill\\
As alternative model to the previous bounded consume rate function without stock limitation, we introduce here 
a second consume rate $c_2(S,N,F)$, 
which ensures a bounded stock with maximal value $S_{\max}>0$ as a consequence of a potentially unbounded consume rate $c_2$. In this context, unbounded consumption can be interpreted as a (seemingly) unlimited tendency to 
waste resources with saturating stock level, despite the fact that any realistic stock is limited.

As in the previous subsection, we distinguish three cases:

\begin{description}[topsep=5pt, leftmargin=6mm]
\item[Depleting regime]
If the population is large in comparison to the food supply rate, we chose the same 
depleting regime as in the previous Subsection \ref{cbounded}, i.e. for a $\de{c}^\infty  \in(c_{\min} , c_{\max})$, 
we consider $\de{c}(S)$ as defined in \eqref{cde} and call the depleting regime 
all states such that $f<\de{c}(S)$.
In particular, this implies
\begin{align*}
c_2(S,N,F) = f+ \frac{S}{N}\left(1-e^{-N \left(1-{f}/{c_{\scriptscriptstyle \mathrm{de}}(S)}\right)}\right)
\qquad \text{for} \quad f<\de{c}(S).
\end{align*}
	
\item[Storing regime] Considering a small population in comparison to the food supply rate
in the sense that $f> \st{c}$ for a $\st{c}\in(\de{c}^\infty ,c_{\max})$, 
we postulate (similar to Subsection \ref{cbounded}) a lower threshold $f=\stt{c}(S)$, above which the population decides to store food in the stock. 
Again it is natural to model $\stt{c}(S)$ monotone increasing in $S$.  
If the current stock level $S$ reaches the maximum value $S_{\max}$, 
the individuals consume (or waste) everything.
A simplest choice for $\stt{c}(S)$ which satisfies all our conditions is
\begin{equation}\label{st2}
\stt{c}(S)= S + c_{\mathrm{st}}\quad\Leftrightarrow\quad \stt{c}^{-1}(f) = f - c_{\mathrm{st}}.
\end{equation}
A model of a prototypical continuous consume rate function for $f\geq \stt{c}(S)$ is given by	
\begin{equation*}
c_2(S,N,F)=
\begin{cases}
	fe^{S-S_{\max}}+ \stt{c}(S)\left(1- e^{S-S_{\max}}\right)  & \text{if } f > \stt{c}(S) \text{ and } S< S_{\max},\\
	f & \text{if } f > \stt{c}(S) \text{ and } S\geq S_{\max}.
\end{cases}
\end{equation*}
	
\item[Neutral consumption regime] 
As in Subsection \ref{cbounded}, for medium individual food supply $\de{c}(S) <f< \stt{c}(S)$, 
the only possible choice is 
\[
	c_2(S,N,F)=f.
\]	
\end{description}
\medskip

Combining the three regimes, a prototypical model of a consume rate function with limited stock is 
\begin{equation}\label{c_2}
c_2(S,N,F)=
\begin{cases}
f + \frac{S}{N}\left(1-e^{-N \left(1-{f}/{c_{\scriptscriptstyle \mathrm{de}}(S)}\right)}\right) & \text{if } f \leq \de{c}(S),\\
f & \text{if } \de{c}(S) < f\leq \stt{c}(S),\\
fe^{S-S_{\max}}+ \stt{c}(S)\left(1- e^{S-S_{\max}}\right) & \text{if } f > \stt{c}(S) \text{ and } S< S_{\max},\\
f & \text{if } f > \stt{c}(S) \text{ and } S\geq S_{\max}.
\end{cases}
\end{equation}

By the definition of $G(S,N,F)= \left(f-c_2(S,N,F)N\right)_{+}$, we obtain
\begin{equation*}
G(S,N,F)=
\begin{cases}
N (f-\stt{c}(S))\left(1- e^{S-S_{\max}}\right) &\qquad \text{if } f > \stt{c}(S) \text{ and } S< S_{\max},\\
0 &\qquad \text{else, }
\end{cases}
\end{equation*}
and for $l(S,N,F)S= \left(f-c_2(S,N,F)N\right)_{-}$, we have
%\begin{align*}
%l(S,N)&=\left\lbrace \begin{matrix}
%\frac{F}{S} \left( \exp\left[\left(\frac{F(1+S)}{c_{\mathrm{Fa}}S + c_{\min}} -N\right)S\right]-1 \right) && \text{if } f \leq \frac{c_{\mathrm{Fa}}S + c_{\min}}{1+S},\\
%0 && \text{if } F> N\frac{c_{\mathrm{Fa}}S + c_{\min}}{1+S}.
%\end{matrix}
%\right.
%\end{align*}
\begin{equation*}
l(S,N,F)=
\begin{cases}
1-e^{-N \left(1-{f}/{c_{\scriptscriptstyle \mathrm{de}}(S)}\right)}&\qquad \text{if } f \leq \de{c}(S),\\
0 &\qquad \text{if } f> \de{c}(S).
\end{cases}
\end{equation*}

\subsection{Properties of the consume rate functions $c_1$ and $c_2$}\hfill\\
The following Proposition~\ref{c1Smax} and Corollary~\ref{c12} ensure that both consume rate models satisfy the requirements of  Theorems~\ref{Thm:pop_dnymics_existence} and 
\ref{Thm:singular_limit} under suitable assumptions. 

\begin{proposition}[A-priori bounds for $S$ for consume rate function $c_1$]\label{c1Smax}\hfill\\
For fixed $T>0$, assume either 
\begin{equation}\label{bounda}
\|F\|_{C([0,T])}\leq \kappa c_{\max}N_{in}\exp(-T)\qquad 
\text{for some} \quad  \kappa\in (0,1)
\end{equation}
or 
\begin{equation}\label{boundb}
\begin{gathered}
0< F_{\min}\le F(t) \le F_{\max},\qquad \forall t\in [0,T],\qquad
\text{with}\quad \gamma:=\frac{F_{\max}}{F_{\min}}\frac{c_{\min}}{c_{\max}}<1,\\
%\end{equation}
\text{and initial data, which satisfy}\\
%\begin{equation}
N_{in}\ge\frac{F_{\min}}{c_{\min}}.
%, \qquad 
%S_{in} \le \frac{\kappa-\frac{\st{c}}{c_{\max}}}{1-\kappa}.
\end{gathered}
\end{equation}

Then, there holds
%exists a constant ${S_{\max}}\ge S_{in}\ge0$ such that
$$
0\leq S_\varepsilon \leq {S_{\max}}, \qquad \text{on} \quad  [0,T], 
$$
where 
%either
$$
S_{\max}:=\max\left\{S_{in}, \frac{\kappa c_{\max} - \st{c}}{c_{\max}(1-\kappa)}\right\}
 \qquad \text{or} \qquad S_{\max}:=\frac{\gamma-\frac{\st{c}}{c_{\max}}}{1-\gamma}.
$$
\end{proposition}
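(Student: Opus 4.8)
The plan is to reduce both cases to a single barrier argument for $S$, the only nontrivial input being a uniform upper bound on the individual food supply $f = F/N$. The lower bound $S_\varepsilon \geq 0$ is already guaranteed by the quasi-positivity of \eqref{pop_dnymics_stock_evol1} established in Theorem~\ref{Thm:pop_dnymics_existence}, so I focus on the upper bound. The key structural observation is that, by \eqref{c_1}, the stock can only grow in the storing regime: in the depleting and neutral regimes one has $c_1(S,N,F) \geq f$, hence $\varepsilon \dot{S} = N(f - c_1) \leq 0$ there, whereas in the storing regime $\varepsilon \dot{S} = N(f - \sto{c}(S))$ with $\sto{c}(S) = (\st{c} + c_{\max}S)/(1+S)$ from \eqref{st1}. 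A direct computation shows that $f > \sto{c}(S)$ is equivalent to $S < (f - \st{c})/(c_{\max} - f)$. Thus, whenever $f \leq \theta c_{\max}$ for some $\theta < 1$, the stock is strictly increasing only for $S$ below a threshold that is monotone in $f$ and hence bounded by $(\theta c_{\max} - \st{c})/(c_{\max}(1-\theta))$.

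First I would treat case \eqref{bounda}. Since $c_1 \geq 0$, the population equation $\dot{N} = (c_1/c_{\min} - 1)N \geq -N$ yields $N(t) \geq N_{in}e^{-t} \geq N_{in}e^{-T}$ on $[0,T]$, hence $f = F/N \leq \|F\|_{C([0,T])}/(N_{in}e^{-T}) \leq \kappa c_{\max}$. Taking $\theta = \kappa$ in the observation above gives the threshold $(\kappa c_{\max} - \st{c})/(c_{\max}(1-\kappa))$, matching the stated $S_{\max}$. For case \eqref{boundb} the lower bound on $N$ instead comes from an invariant-region argument: I claim $N(t) \geq F_{\min}/c_{\min}$ on $[0,T]$. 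Indeed, at the level $N = F_{\min}/c_{\min}$ one has $f = Fc_{\min}/F_{\min} \geq c_{\min}$ because $F \geq F_{\min}$; using $c_1 \geq f$ in the depleting and neutral regimes and $c_1 = \sto{c}(S) \geq \st{c} > c_{\min}$ in the storing regime, it follows that $c_1 \geq c_{\min}$, so $\dot{N} \geq 0$ there. Together with $N_{in} \geq F_{\min}/c_{\min}$ the barrier is never crossed, whence $f \leq F_{\max}c_{\min}/F_{\min} = \gamma c_{\max}$; taking $\theta = \gamma$ gives the threshold $(\gamma - \st{c}/c_{\max})/(1-\gamma)$ as claimed.

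Finally, with the bound $f \leq \theta c_{\max}$ in hand (for $\theta = \kappa$ respectively $\theta = \gamma$), I would close with a Nagumo-type invariance argument for $S$. Set $M$ equal to the stated $S_{\max}$, i.e. the maximum of $S_{in}$ and the threshold in case \eqref{bounda}, and the threshold itself in case \eqref{boundb} (under the natural assumption $S_{in}\le S_{\max}$). Whenever $S(t) \geq M$, the threshold estimate places the state outside the storing regime, so $\varepsilon\dot{S}(t) = N(f - c_1) \leq 0$; since $S(0) = S_{in} \leq M$ and $S \in C^1([0,T])$ by Theorem~\ref{Thm:pop_dnymics_existence}, a first-crossing argument then excludes $S(t) > M$. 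The main obstacle is the derivation of the uniform lower bound on $N$, equivalently the upper bound on $f$: this is where the two hypotheses genuinely differ, and in case \eqref{boundb} it hinges on recognising that the structural inequality $c_1 \geq f$ renders the set $\{N \geq F_{\min}/c_{\min}\}$ invariant. Once $f$ is controlled, the remaining barrier argument for $S$ is routine.
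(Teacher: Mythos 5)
Your proposal is correct and takes essentially the same route as the paper's own proof: a uniform lower bound on $N_\varepsilon$ (exponential decay under \eqref{bounda}; the invariant region $N_\varepsilon\ge F_{\min}/c_{\min}$ under \eqref{boundb}, which is exactly the paper's differential inequality $\dot N_\varepsilon \ge F_{\min}/c_{\min}-N_\varepsilon$ recast as a barrier argument) yields $f_\varepsilon\le\theta c_{\max}$ with $\theta=\kappa$ or $\theta=\gamma$, after which the sign analysis of $\dot S_\varepsilon$ in the storing regime gives the stated threshold. Your explicit inversion $f>\sto{c}(S)\Leftrightarrow S<(f-\st{c})/(c_{\max}-f)$ combined with a first-crossing argument is just a repackaging of the paper's estimate $0\le\dot S_\varepsilon\le\frac{N_\varepsilon}{\varepsilon}\bigl(\theta c_{\max}-\sto{c}(S_\varepsilon)\bigr)$, and you correctly flag that in case \eqref{boundb} the argument actually produces $\max\{S_{in},\,(\gamma-\st{c}/c_{\max})/(1-\gamma)\}$, matching the paper's proof rather than its slightly weaker statement.
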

\begin{remark}
We remark that the assumptions \eqref{boundb} allow to identify an invariant region for $(N_{\varepsilon},S_{\varepsilon})$ of the form $N_{\varepsilon}\ge \frac{F_{\min}}{c_{\min}}$ and $S_{\varepsilon}\in[0,\frac{\kappa-{\st{c}}/{c_{\max}}}{1-\kappa}]$.
Note that without lower bounds $F_{\min}$ 
and $N_{in}$ the population $N_{\varepsilon}$ can be arbitrarily small
and that without an upper bound $F_{\max}$ the stock 
can grow arbitrarily large arbitrarily fast as $\varepsilon\to 0$.
% The proof uses an estimate of the  
\end{remark}

\begin{proof}
Remember the definitions $N_\varepsilon:=N(u_\varepsilon)$ and $f_\varepsilon={F}/{N_\varepsilon}$.

Integrating \eqref{pop_dnymics_pop_evol4} with 
$\lambda_1= \Bigl(\frac{c_1(S,N,F)}{c_{\min}}-1\Bigr)\ge -1$ yields independently of $\varepsilon$
\begin{equation}\label{Nbounda}
	N_\varepsilon(t) \geq N_{in}\exp(-t)\geq N_{in}\exp(-T)=: \delta(T),\qquad\forall t\in[0,T],\ \forall\varepsilon>0.
\end{equation}
By assumption \eqref{bounda}, $F$ is uniformly bounded by $\kappa c_{\max} \delta(T)$.
Consequently, $f_\varepsilon=F/N_\varepsilon$ is uniformly bounded by 
\begin{equation*}
	f_\varepsilon \leq \kappa c_{\max} ,\qquad\forall t\in[0,T],\ \forall\varepsilon>0.
\end{equation*}
In case $\kappa c_{\max}\leq \st{c}$, then $\dot{S}_\varepsilon\leq 0$ in $[0,T]$ by definition of 
$G=\max\{0,N(f-c_1(S,N,fN))\}$ in \eqref{G_1} and since $c_1\geq f$ holds for $f\leq \st{c}$.
Hence, then $S_\varepsilon \leq S_{in}$ for all $t\ge 0$ and $\varepsilon>0$.

In case that $\kappa c_{\max}\in (\st{c},c_{\max})$, we have $\dot{S}_\varepsilon \leq 0$ if $f_\varepsilon \leq \sto{c}(S_\varepsilon)$ while $\dot{S}_{\varepsilon}\ge0$ provided $f_\varepsilon \geq \sto{c}(S_\varepsilon)= \frac{\st{c} + c_{\max}S_\varepsilon}{1+S_\varepsilon}$. In the second case, we estimate 
\begin{equation*}
0 \le \dot{S}_\varepsilon = \frac{N_\varepsilon}{\varepsilon}\left(f_\varepsilon - \frac{\st{c} + c_{\max}S_\varepsilon}{1+S_\varepsilon}\right) \leq \frac{N_\varepsilon}{\varepsilon}\left(\kappa c_{\max} - \frac{\st{c} + c_{\max}S_\varepsilon}{1+S_\varepsilon}\right),
\end{equation*}
which implies that $S_\varepsilon$ is uniformly bounded by
\begin{equation*}
	S_\varepsilon(t) \leq \max\left\{S_{in}, \frac{\kappa c_{\max} - \st{c}}{c_{\max}(1-\kappa)}\right\},\qquad\forall t\in[0,T],\ \forall\varepsilon>0.
\end{equation*}
\medskip
		
Alternatively, assume assumption \eqref{boundb}.
Since $N_\varepsilon$ in \eqref{pop_dnymics_pop_evol4}
decays only if $c_1\le c_{\min}$, which by definition of $c_1$ only happens if also $c_1\ge f_{\varepsilon}$ holds, we estimate in such situations that
\begin{equation*}
\dot{N}_\varepsilon = \lambda N_\varepsilon \ge \left(\frac{f_{\varepsilon}}{c_{\min}}-1\right) N_\varepsilon \ge 
\frac{F_{\min}}{c_{\min}}-N_\varepsilon,
\end{equation*}
while otherwise $N_\varepsilon$ is non-decreasing. 
Hence, we obtain independently of $\varepsilon$ that 
\begin{equation}\label{Nboundb}
N_\varepsilon \ge \min\left\{N_{in},\frac{F_{\min}}{c_{\min}}\right\}=\frac{F_{\min}}{c_{\min}} 
,\qquad\forall t\in[0,T],\ \forall\varepsilon>0,
\end{equation}
by the assumption on $N_{in}$ in \eqref{boundb}.
%$\lambda_1= \bigl(\frac{c_1}{c_{\min}}-1\bigr)$ can be 
Together with the definition of $\gamma<1$ in \eqref{boundb}, this implies the estimate
$f_\varepsilon \le \frac{F_{\max}}{N_{\min}}\le \gamma c_{\max}$, which is a sufficient condition to avoid 
that $S_{\varepsilon}$ grows unboundedly 
in the limit $\varepsilon\to 0$, see Fig.~\ref{Fig:f-Sdiagram_UnboundedStock}.
Moreover, we have $\dot{S}_\varepsilon\ge0$ 
only if $f_\varepsilon \geq \st{c}(S_\varepsilon)= \frac{\st{c} + c_{\max}S_\varepsilon}{1+S_\varepsilon}$, where we estimate
\begin{equation*}
\dot{S}_\varepsilon = \frac{N_\varepsilon}{\varepsilon}\left(f_\varepsilon - \sto{c}(S_\varepsilon)\right) \leq \frac{1}{\varepsilon}\left(F_{\max} - \frac{F_{\min}}{c_{\min}}\frac{\st{c} + c_{\max}S_\varepsilon}{1+S_\varepsilon}\right)=\frac{F_{\min}c_{\max}}{\varepsilon \,c_{\min}}
\left(\gamma - \frac{\frac{\st{c}}{c_{\max}}+S_\varepsilon}{1+S_\varepsilon}\right),
\end{equation*}
which implies independently of $\varepsilon$ that
\begin{equation*}
S_\varepsilon(t) \leq \max\left\{S_{in}, \frac{\gamma-\frac{\st{c}}{c_{\max}}}{1-\gamma}\right\}=\max\left\{S_{in}, \frac{\gamma c_{\max} - \st{c}}{c_{\max}(1-\gamma)}\right\},
\qquad\forall t\in[0,T],\ \forall\varepsilon>0.
\end{equation*}
Note that in case $\gamma < \frac{\st{c}}{c_{\max}}$,
we always have $\dot{S}_{\varepsilon}\le 
{\varepsilon}^{-1}\left(F_{\max} - \frac{F_{\min}}{c_{\min}}\st{c}\right)\le 0$.
\end{proof}

%\begin{corollary}[Admissibility of the consume rate functions $c_1$ and $c_2$]\label{c12}\hfill\\
%Consider the consume rate function $c_1(S,N,F)$ 
%as defined in \eqref{c_1} and with $S_{\max}$ 
%as given by Proposition \ref{c1Smax}
%or  $c_2(S,N,F)$ as given in \eqref{c_2} with $S_{\max}$ as in the definition. 
%
%Then, $c_1$ and $c_2$ 
%satisfy the assumptions of 
%Theorems~\ref{Thm:pop_dnymics_existence} and 
%\ref{Thm:singular_limit}. 
%In particular, by recalling $0<c_{\min}<\de{c}^\infty <\st{c}<c_{\max}$, 
%we have 
%\begin{align*}
%{U}(f)&=\max\{0,\min\{S_{\max},\de{c}^{-1}(f)\}\},\qquad \text{with}\quad \de{c}^{-1}(f)=\frac{f-c_{\min}}{\de{c}^{\infty} -f},\qquad \text{i.e.}\\
%&
%\begin{cases}
%U(f) =0 & \qquad f\in[0,c_{\min})\\[2mm]
%{U}(f)\in(0,S_{\max}) \text{ monotone increasing} 
%&\qquad f\in\left(c_{\min},\frac{S_{\max}\de{c}^{\infty} + c_{\min}}{S_{\max} +1}\right)\\[2mm]
%U(f) =S_{\max} & \qquad f\ge \frac{S_{\max}\de{c}^{\infty} + c_{\min}}{S_{\max} +1}.
%\end{cases}
%\\
%\intertext{and}
%L(f)&=\max\{0,\min\{S_{\max},\sti{c}^{-1}(f)\}\},\qquad \text{for}\quad i=1,2,\qquad \text{i.e.}\\
%&
%\begin{cases}
%L(f) =0 & \qquad f\in[0,\st{c})\\[2mm]
%{L}(f)\in(0,S_{\max}) \text{ monotone increasing}  
%&\qquad f\in\left(\st{c},\sti{c}(S_{\max})\right)\\[2mm]
%L(f) =S_{\max} & \qquad f\ge \sti{c}(S_{\max}).
%\end{cases}
%\end{align*}
%\end{corollary}

\begin{corollary}[Admissibility of the consume rate functions $c_1$ and $c_2$]\label{c12}\hfill\\
	Consider the consume rate function $c_1(S,N,F)$ 
	as defined in \eqref{c_1} and with $S_{\max}$ 
	as given by Proposition~\ref{c1Smax}
	or  $c_2(S,N,F)$ as given in \eqref{c_2} with $S_{\max}$ as in the definition. 
	
	Then, $c_1$ and $c_2$ 
	satisfy the assumptions of 
	Theorems~\ref{Thm:pop_dnymics_existence} and 
	\ref{Thm:singular_limit}. 
	Moreover, for $0<c_{\min}<c^{\infty}_{\mathrm{de}}<\st{c}<c_{\max}$, 
	we have as upper and lower thresholds functions 
	\begin{align*}
	{U}(f)&=\min\{S_{\max},u(f)\}\qquad \text{with}\quad 
	u(f):=\max\{0,c_{\mathrm{de}}^{-1}(f)\}\quad \text{and}\quad c_{\mathrm{de}}^{-1}(f)=\frac{f-c_{\min}}{\de{c}^{\infty} -f} \\
	&=
	\begin{cases}
	0 & \qquad f\in[0,c_{\min})\\[2mm]
	\in(0,S_{\max})\quad \text{ strictly monotone increasing} 
	&\qquad f\in\left(c_{\min},\tilde{c}\right)\\[2mm]
	S_{\max} & \qquad f\ge \tilde{c},
	\end{cases}
	\end{align*}
	where
	\[
	\tilde{c}:=\frac{S_{\max}\de{c}^{\infty} + c_{\min}}{S_{\max} +1} \quad\Leftrightarrow \quad u(\tilde{c})=S_{\max},
	\] 
and 
	\begin{align*}
	L(f)&=\max\{0,l(f)\}\qquad \text{with}\quad l(f):=\min\{S_{\max},\sti{c}^{-1}(f)\}\\
	&=
	\begin{cases}
	0 & \qquad f\in[0,\st{c})\\[2mm]
	\in(0,S_{\max})\quad \text{ strictly monotone increasing}  
	&\qquad f\in\left(\st{c},\sti{c}(S_{\max})\right)\\[2mm]
	S_{\max} & \qquad f\ge \sti{c}(S_{\max}),
	\end{cases}
	\end{align*}
	where $\sti{c}^{-1}(f) =\eqref{st1}$ or $\eqref{st2}$ for 
	$i=1,2$ and $l(\st{c})=0$.
\end{corollary}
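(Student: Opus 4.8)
The plan is to split the verification into the regularity required by Theorem~\ref{Thm:pop_dnymics_existence} and the play-operator structure required by Theorem~\ref{Thm:singular_limit}, and then to read off $U$ and $L$ by inverting the monotone regime boundaries. For the regularity I would first note that on the interior of each of the three (resp.\ four) regimes in \eqref{c_1} and \eqref{c_2} the function $c_i$ is a composition of $C^1$ maps --- the ratio $f=F/N$, exponentials, and the rational thresholds $\de{c}$ and $\sti{c}$ from \eqref{cde}, \eqref{st1}, \eqref{st2} --- whose denominators ($S+1$, and $\de{c}^\infty-f$ resp.\ $c_{\max}-f$) stay bounded away from zero on the relevant region; hence $c_i$ is $C^1$ and locally Lipschitz there. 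Next I would check that the branches agree on the shared interfaces: at $f=\de{c}(S)$ the depleting branch $f+\frac SN(1-e^{-N(1-f/\de{c}(S))})$ collapses to $f$ because the exponent vanishes, and at $f=\sti{c}(S)$ (resp.\ $S=S_{\max}$ in \eqref{c_2}) the storing branch likewise reduces to $f$, matching the neutral value. A standard gluing lemma --- a continuous function that is Lipschitz with a common constant on each of finitely many closed pieces covering a neighbourhood is Lipschitz there --- then upgrades this to local Lipschitz continuity on the physical domain $\{S\ge0,\ N>0,\ F\ge0\}$, and since $\lambda_i=c_i/c_{\min}-1$ is affine in $c_i$ the same holds for $\lambda_i$. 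The extension to $\mathbb R\times\mathbb R\times\mathbb R^+$ is obtained by pre-composing with a Lipschitz truncation keeping $N$ bounded away from $0$, which is harmless because the dynamics keeps $N_\varepsilon\ge N_{in}e^{-T}>0$ by \eqref{Nbounda}.

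For the structural assumptions I would identify the boundaries of the neutral regime $\{\dot S=0\}$. Since $\de{c}$ and $\sti{c}$ are strictly increasing in $S$, the depleting condition $f\le\de{c}(S)$ is equivalent to $S\ge\de{c}^{-1}(f)$ and the storing condition $f\ge\sti{c}(S)$ to $S\le\sti{c}^{-1}(f)$; hence at fixed $f$ the neutral band is exactly $\sti{c}^{-1}(f)\le S\le\de{c}^{-1}(f)$. Imposing the physical constraint $0\le S\le S_{\max}$ by clipping with $\max\{0,\cdot\}$ and $\min\{S_{\max},\cdot\}$ produces precisely $U(f)=\min\{S_{\max},\max\{0,\de{c}^{-1}(f)\}\}$ and $L(f)=\max\{0,\min\{S_{\max},\sti{c}^{-1}(f)\}\}$, and solving $\de{c}^{-1}(f)=S_{\max}$ gives the breakpoint $\tilde c=\de{c}(S_{\max})$. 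Computing $\de{c}^{-1}(f)=\frac{f-c_{\min}}{\de{c}^\infty-f}$ from \eqref{cde} and $\sti{c}^{-1}$ from \eqref{st1}/\eqref{st2} then yields the stated case distinctions.

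Finally I would verify the two properties the generalised play needs. Nonemptiness $L(f)\le U(f)$ follows from $\de{c}(S)\le\de{c}^\infty<\st{c}\le\sti{c}(S)$, which under the ordering $c_{\min}<\de{c}^\infty<\st{c}<c_{\max}$ gives $\sti{c}^{-1}(f)\le\de{c}^{-1}(f)$ on the common range (smaller increasing function has larger inverse). Lipschitz continuity of $U$ and $L$ --- the hypothesis needed to invoke \cite[III.2, Thm.~2.2]{visintin2013differential} in Remark~\ref{Rem:Exist/UniqueLimit} --- follows since on the open middle interval the inverse thresholds are $C^1$ with bounded derivative (the denominators $\de{c}^\infty-f$ and $c_{\max}-f$ stay bounded below as $f<\tilde c<\de{c}^\infty$ resp.\ $f<\sti{c}(S_{\max})<c_{\max}$), and composing with $\max$/$\min$ against constants preserves the Lipschitz bound. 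The \emph{main obstacle} I anticipate is not the algebraic inversion but the careful bookkeeping of local Lipschitz continuity across all interfaces together with the singular behaviour of $f=F/N$ as $N\to0$; everything else is a direct computation from the explicit formulas \eqref{c_1} and \eqref{c_2}.
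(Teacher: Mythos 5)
Your proposal is correct and follows essentially the same route as the paper's own (much terser) proof: regime-wise smoothness of $c_1,c_2$ with continuous matching at the interfaces, glued to local Lipschitz continuity for $N>0$ (with the lower bound on $N_\varepsilon$ from \eqref{Nbounda} resp.\ \eqref{Nboundb} rendering the singularity of $f=F/N$ at $N=0$ harmless), and $U$, $L$ read off by inverting the strictly monotone thresholds $\de{c}$, $\sti{c}$ and clipping to $[0,S_{\max}]$ --- which the paper compresses into ``follows directly from the definitions''. You merely spell out details the paper leaves implicit (the branch matching at $f=\de{c}(S)$, $f=\sti{c}(S)$ and $S=S_{\max}$, the gluing lemma, the ordering $L\le U$, and the Lipschitz continuity of $U$ and $L$ needed for the generalised play in Theorem~\ref{Thm:singular_limit}), and all of these check out.
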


\begin{proof}
	In order to verify the assumptions of Theorems~\ref{Thm:pop_dnymics_existence} and 
	\ref{Thm:singular_limit}, we observe first that 
	both consume rate functions $c_1(S,N,F)$ and $c_2(S,N,F)$ 
	are piecewise $C^1$-functions as long as $N>0$
	and absolutely continuous in the points where the functions are glued together. Hence, $c_1$ and $c_2$
	are locally Lipschitz continuous in $N$ for all $S,F\geq 0$ and $N>0$.
	This lower bound for $N_\varepsilon$ is given by \eqref{Nbounda} or respectively \eqref{Nboundb} in the case of consumption rate $c_1$, and by \eqref{Nbounda} in the case of consumption rate $c_2$, where $\lambda_i(S,N,F)= \frac{c_i(S,N,F)}{c_{\min}}-1$, $i=1,2$. 
	Equally, local Lipschitz continuity holds for the mappings $S\mapsto c_i(S,N,F)$ and $F\mapsto c_i(S,N,F)$, $i=1,2$ and for every $N>0$. 
	
	Finally, the above characterisations of $U$ and $L$ 
	follow directly from the definitions. 
	%satisfy the assumptions for $c(S,N,F)$ in Theorem~\ref{Thm:pop_dnymics_existence}, and that $\lambda_i(S,N,F)= \frac{c_i(S,N,F)}{c_{\min}}-1$, $i=\{1,2\}$, 
	%satisfies the conditions assumptions for $\lambda(S,N,F)$ in Theorem~\ref{Thm:pop_dnymics_existence}.
	%
	%Note first that $c_1$ and $c_2$ are (only) continuous as functions from $\mathbb{R}^+\times \mathbb{R}^+\backslash\{0\} \times\mathbb{R}^+$ into $\mathbb{R}$.
	%	This is no problem, since in Theorem~\ref{Thm:pop_dnymics_existence}, for any $T>0$ and all $t\in [0,T]$ the population size $N(t)$ is bounded from below by $\delta(T)>0$ and $S(t)\geq 0$.
	%	That is, for $i\in \{1,2\}$, $c_i$ and therewith also $\lambda_i$ can be arbitrarily continued to $\mathbb{R}\times \mathbb{R} \times\mathbb{R}^+$.
	%	
	%But this implies local Lipschitz continuity of the mappings $(S,N,F)\mapsto c_i(S,N,F)$ from $\mathbb{R}^+\times \mathbb{R}^+\backslash\{0\}\times \mathbb{R}^+$ to $\mathbb{R}^+$ and of $(S,N,F)\mapsto \lambda_i(S,N,F)$ from $\mathbb{R}^+\times \mathbb{R}^+\backslash\{0\}\times \mathbb{R}^+$ to $\mathbb{R}$.
\end{proof}

\section{Limit $\varepsilon \rightarrow 0$}\label{limit}

This section is devoted to the proof of Theorem~\ref{Thm:singular_limit} for 
both consume functions $c_1$ or $c_2$, where the case $c_1$ requires the a-priori estimates stated in Proposition~\ref{c1Smax}.
Moreover, we recall the form of the growth rate function $\lambda = \frac{c}{c_{\min}} - 1$ for $c_{\min} >0$.

%We want to vary $\varepsilon$ now. 
%But then we need some norm estimates of the solution $u_\varepsilon$ and $S_\varepsilon$ independent of $\varepsilon$ before we can take any limit.
%Since $S_\varepsilon$ is not a priori bounded in the model with bounded consume function $c_1(S,N,F)$, this case needs the additional assumption $F_{C([0,T])}\leq \kappa c_{\max}N_{in}\exp(-T)$ for some $\kappa\in (0,1)$. 
%Moreover, we define $S_{\max,1}:=\max\left\{S_{in}, \frac{\kappa c_{\max} - \st{c}}{c_{\max}(1-\kappa)}\right\}$ and $S_{\max,2}:=S_{\max}$ and prove that $S_\varepsilon$ is uniformly bounded by $S_{\max}$.

In performing the limit $\varepsilon\to 0$, we denote $N_\varepsilon:=N(u_\varepsilon)$ and $f_\varepsilon:= F/N_\varepsilon$.
A main difficulty in the limit $\varepsilon\rightarrow 0$ are 
lacking bounds of the derivative of $S_\varepsilon$.
One main element of our proof is to introduce a projection operator $p_{\varepsilon}$ onto the ($\varepsilon$ independent) area $\dot{S}_\varepsilon=0$ 
in the $f_\varepsilon$-$S_\varepsilon$ phase space diagram, cf. \cite{kuehn2017generalized}. In order to appropriately define this projection $p_{\varepsilon}$,
we require the uniform in $\varepsilon$ bounds on the stock level $S_\varepsilon$.
%The projection can be defined in case of a bounded stock because then there are no poles in the $f_\varepsilon-S_\varepsilon$-graph.

The main advantage of the projection operator $p_{\varepsilon}$ is to bypass the unbounded 
derivative of $S_\varepsilon$ in the limit $\varepsilon\rightarrow 0$. More precisely, since the upper and lower threshold functions $U(f)$ and $L(f)$ (which envelop the area $\dot{S}_{\varepsilon}=0$) have finite slope, 
the derivatives of $p_{\varepsilon}$ are bounded in norm independently of $\varepsilon$
in terms of the regularity of its driving input variable 
$f_{\varepsilon}(t)=F/N_\varepsilon$, i.e. by 
$F,N_\varepsilon\in\mathrm{W}^{1,\infty}(0,T)$ since 
$N_\varepsilon$ is bounded below independently of $\varepsilon>0$.

Finally, before we prove Theorem~\ref{Thm:singular_limit}, let us illustrate that the 
limit $\lm{S}$ evolves as a generalised play operator between the curves ${U}$ and $L$ with input $\lm{f}=F/\lm{N}$,
see e.g. Figure~\ref{pop_pic_limit_phase_diagram_1} in the case of $c_1$ or compare Figure~\ref{pop_pic_projection} in the case $c_2$. 
Let $\lm{f}=F/\lm{N}$ be a piecewise monotone function
on some intervals $[t_1,t_2],[t_2,t_3],\ldots\subset [0,T]$,
then $\lm{S}$ behaves according to the following cases: 
\begin{align*}
\text{If }\lm{f}(t_1) \in[0,\tilde{c})\ &\text{and}\  
\lm{S}(t_1) = u(\lm{f}(t_1))\\ 
&\text{then for }t>t_1
\begin{cases}
\lm{S}(t)= u(\lm{f}(t)) & \text{as long as}\quad \lm{f}(t) \subset [0,\lm{f}(t_1))\searrow \\
\lm{S}(t)\equiv  \lm{S}(t_1) & \text{as long as}\quad \lm{f}(t)  \subset (\lm{f}(t_1),\lm{f}(t_2)] \nearrow.
\end{cases}
\end{align*}
In the second case, if $\lm{f}(t)$ increases 
until $\lm{S}(t_1)=\lm{S}(t_2)=l(\lm{f}(t_2))$ at some time $t_2$, 
then, for $\lm{f}$ monotone on $[t_2,t_3]\subset [0,T]$
we have $\lm{S}(t_2) = l(\lm{f}(t_2))$ and $\lm{f}(t_2) \ge \st{c}$
\begin{align*}
\qquad
\text{and for }t>t_2 
\begin{cases}
\lm{S}(t)\equiv  \lm{S}(t_2)& \text{as long as}\quad \lm{f}(t) \subset [c_{\mathrm{de}}(\lm{S}(t_2)),\lm{f}(t_2))\searrow \\
\lm{S}(t) =  l(\lm{f}(t)) & \text{as long as}\quad \lm{f}(t)  \subset (\lm{f}(t_2),\lm{f}(t_3)] \nearrow
\end{cases}
\end{align*}
where in the first case the maximal possible decrease of $\lm{f}$ down to $\lm{S}(t_3)=c_{\mathrm{de}}(\lm{S}(t_2))=u(\lm{f}(t_3))$ 
leads back to the beginning of the first case above with $\lm{f}(t_3) \in[c_{\min},\tilde{c}]$.

%If 
%$\lm{f} \leq \tilde{c}$
%then 
%\[
%\lm{S}(t_1)\leq \min\left\{ \frac{\lm{f}(t_1)-c_{\min}}{\de{c}^{\infty} -\lm{f}(t_1)}, {S}_{\max} \right\}
%\]
%and
%\[
%\lm{S}(t)= \max\left\{ 0, \min\left\{ \frac{\lm{f}(t)-c_{\min}}{\de{c}^{\infty} -\lm{f}(t)}, \lm{S}(t_1) \right\}\right\}.
%\]
%
%If 
%\[
%\tilde{c}_i < \lm{f} < \st{c},
%\]
%then $\lm{S}$ remains constant.
%
%Finally if 
%\[
%\lm{f} \geq \st{c}
%\]
%then
%\[
%\lm{S}(t_1)\geq \max\left\{ \lm{f}(t_1) - \st{c}, 0 \right\}
%\]
%and
%\[
%\lm{S}(t)= \min\left\{ S_{\max}, \max\left\{ \lm{f}(t) - \st{c}, \lm{S}(t_1) \right\}\right\}.
%\]

\begin{proof}[Proof of Theorem~\ref{Thm:singular_limit}]
	
We consider a sequence $\varepsilon>0$ which tends to zero and recall that Proposition \ref{c1Smax} implies 
uniform bounds of $S_\varepsilon(t)$ in $\varepsilon$ and $t$  for the consume rate functions $c_1$ under suitable assumptions on external food supply $F(t)$
while such uniform bounds are true for $c_2$ by construction. Note that we shall denote both $c_1$ or $c_2$ simply by $c$. 
Hence, for $c$ being either $c_1$ or $c_2$, there exists a constant $S_{\max}>0$ such that
$$
S_\varepsilon(t)\le S_{\max}>0, \qquad \text{for all } \varepsilon >0, t\ge0.
$$

	Moreover, we already know that $S_\varepsilon$ is nonnegative independently of $\varepsilon$.
	During the whole proof, we suppress the dependence of $l$ and $L$ on $i\in \{1,2\}$.
	
	Let $p_{\varepsilon}(S_\varepsilon,N_\varepsilon,F)$ be the following projection operator in the $f_\varepsilon$-$S_\varepsilon$ phase space, cf. Figure~\ref{pop_pic_projection}:
	\begin{equation}\label{p}
	p_{\varepsilon}(S_\varepsilon,f_\varepsilon):= \begin{cases}
	l(f_\varepsilon) & \text{if } S_\varepsilon\leq \sti{c}^{-1}(f_\varepsilon),\quad i=1,2,\\
	u(f_\varepsilon) & \text{if } S_\varepsilon\geq u(f_\varepsilon) \text{ and } f_\varepsilon \leq \tilde{c},\\
	S_\varepsilon & \text{else.}
	\end{cases}
	\end{equation}
	Note that $0\leq S_\varepsilon \leq {S}_{\max}$ and also $0\leq p_{\varepsilon} \leq {S}_{\max}$.
	In particular, $p_{\varepsilon}(S_\varepsilon,f_\varepsilon)=0\neq S_\varepsilon$ follows always due to a projection downwards, whereas $p_{\varepsilon}(S_\varepsilon,f_\varepsilon)={S}_{\max}\neq S_\varepsilon$ follows always due to a projection upwards.
	
	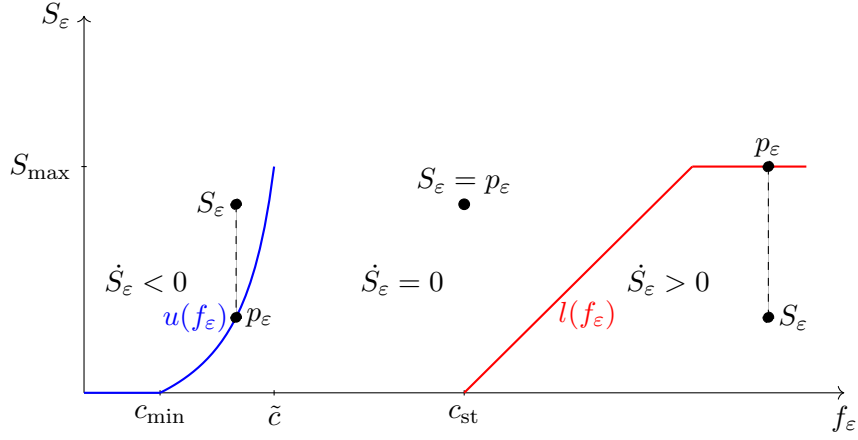
\begin{figure}
		\center
		\begin{tikzpicture}[]
		%\draw[step=.5cm, gray, very thin] (0,0) grid (10,5); 
		\draw[->] (0,0) -- (10,0) coordinate (x axis);
		\draw (-1pt,5 ) node[anchor=east,fill=white] {$S_\varepsilon$};
		\draw (10 cm,-1pt) node[anchor=north] {$f_\varepsilon$};
		\draw[->] (0,0) -- (0,5) coordinate (y axis);
		\draw (1 cm,1pt) -- (1 cm,-1pt) node[anchor=north] {$c_{\min}$};
		\draw (5 cm,1pt) -- (5 cm,-1pt) node[anchor=north] {$\st{c}$};
		\draw (2.5 cm,1pt) -- (2.5 cm,-1pt) node[anchor=north] {$\tilde{c}$};
		\draw (1pt,3 ) -- (-1pt,3 ) node[anchor=east,fill=white] {$S_{\max}$};
		\draw [blue, thick, domain=0:1] plot (\x, {0});
		\draw [blue, thick, domain=1:2.5] plot (\x, {(\x-1)/(3-\x)});
		\draw [blue] (2 cm +1pt,1cm) node[anchor=east] {$u(f_{\varepsilon})$};
		\draw [red, thick, domain=5:((2.5-1)/(3-2.5)+5)] plot (\x, {(\x-5});
		\draw [red] (6cm +3pt,1cm+1pt) node[anchor=west] {$l(f_{\varepsilon})$};
		\draw [red, thick, domain=((2.5-1)/(3-2.5)+5):9.5] plot (\x, {(2.5-1)/(3-2.5)});
		\draw  [black] (3.5,1.5) node[anchor=west] {$\dot{S_{\varepsilon}}=0$};
		\draw  [black] (1.5,1.5) node[anchor=east] {$\dot{S_{\varepsilon}}<0$};
		\draw  [black] (7,1.5) node[anchor=west] {$\dot{S_{\varepsilon}}>0$};
		\filldraw [dashed]
		(2,2.5) circle (2pt) node[align=left, left] {$S_{\varepsilon}$}--
		(2,1) circle (2pt) node[align=left, right] {$p_{\varepsilon}$};
		\filldraw [dashed]
		(9,3) circle (2pt) node[align=left, above] {$p_{\varepsilon}$}--
		(9,1) circle (2pt) node[align=left, right] {$S_{\varepsilon}$};
		\filldraw (5,2.5) circle (2pt) node[align=left, above] {$S_{\varepsilon}=p_{\varepsilon}$};
		%c_{\min}=1, \de{c}= 3, \sti{c}=5, \tilde{c}=2.5, S_{\max}=(2.5-1)/(3-2.5)
		\end{tikzpicture}
		\caption{Sign of the gradient of $S_\varepsilon$ and projection to $p_{\varepsilon}$ in the $f_\varepsilon$-$S_\varepsilon$-phase diagram in the case of $c_2$.}
		\label{pop_pic_projection}
	\end{figure}
	
\bigskip	
	
We divide the proof of the theorem into six steps.
\medskip
	
\noindent\underline{Step 1: Boundedness of $N_\varepsilon$ and $p_{\varepsilon}$ in $\mathrm{W}^{1,\infty}(0,T)$ independently of $\varepsilon>0$.}
\smallskip

The proof of Theorem \ref{Thm:pop_dnymics_existence} yielded $N_\varepsilon(t)\ge \delta(T)>0$  for $t\in[0,T]$, which implies together with $0\leq S_\varepsilon \leq {S}_{\max}$ and $F\in \mathrm{W}^{1,\infty}(0,T)$ that 
$\lambda={c}/{c_{\min}} - 1$ is bounded independently of $\varepsilon$.		
%We show that $N_\varepsilon$ and $p_{\varepsilon}$ are bounded in $\mathrm{W}^{1,\infty}(0,T)$ independently of $\varepsilon>0$:
%	First of all, note that $0\leq p_{\varepsilon} \leq S_{\max}$.
% From Theorem \ref{Thm:pop_dnymics_existence} it follows $0<\delta(T) <N_\varepsilon(t)$  for $t\in[0,T]$ and since $0\leq S_\varepsilon \leq S_{\max}$, the derivative of $N_\varepsilon$ can be bounded independently of $\varepsilon$. 
Hence, a Gronwall argument applied to \eqref{pop_dnymics_pop_evol4} yields boundedness of $N_\varepsilon\in C([0,T])\cap C^1(0,T)$ independently of $\varepsilon$.
Moreover, $f_\varepsilon=F/N_\varepsilon$ is equally Lipschitz continuous on $[0,T]$ with a modulus $L$ that is independent of $\varepsilon$. Since $u$ and $l$ are Lipschitz continuous on $[0,\tilde{c}]$ and $[\st{c},\infty)$ respectively, we conclude that $u(f_\varepsilon)$ is Lipschitz continuous for $f_\varepsilon\in[0,\tilde{c}]$ and $l(f_\varepsilon)$ is Lipschitz continuous for $f_\varepsilon\in [\st{c},\infty)$.
		
Next, we choose $\delta>0$ sufficiently small such that for all $t_1,t_2 \in [0,T]$ with $|t_1-t_2|<\delta$, there holds
\begin{equation}\label{asdf}
	|f_\varepsilon(t_1)-f_\varepsilon(t_2)| \leq L |t_1-t_2| < \frac{\st{c} - \tilde{c}}{2}.
\end{equation}
Let $t_0\in [0,T]$ be given. The definition of $p_{\varepsilon}$ implies that 
$p_{\varepsilon}(t_0)=S_\varepsilon(t_0)$ and  
$\dot{p}_\varepsilon(t_0) = \dot{S_\varepsilon}(t_0)=0$ almost surely whenever 
$f_\varepsilon\in [0,\tilde{c}]$ and $S_\varepsilon(t_0) < u(f_\varepsilon(t_0))$ or $\tilde{c}\leq f_\varepsilon(t_0) \leq \st{c}$ or $f_\varepsilon>\st{c}$ and $S_\varepsilon(t_0) > l(f_\varepsilon(t_0))$, see also Figure~\ref{pop_pic_projection}.

Moreover, if $f_{\varepsilon}(t_0)<\tilde{c}$, then \eqref{asdf} yields $f_{\varepsilon}(t)<\st{c}$ on $[t_0,t_0+\delta]$. Hence, since $S_\varepsilon\geq 0$, in this case
$p_{\varepsilon}(t)= \min\{u(f_\varepsilon(t)), S_\varepsilon(t)\}$ in $[t_0,t_0+\delta]$, and for a.e. $t\in [t_0,t_0+\delta]$ holds
\[
f_{\varepsilon}(t_0)<\tilde{c} \quad\Longrightarrow\quad
\dot{p}_{\varepsilon}(t)=
\begin{cases}
\frac{d}{dt} u(f_\varepsilon)(t)
&\ \text{ if } \quad
u(f_\varepsilon(t))\leq S_\varepsilon(t),\\
0 &\ \text{ if } \quad 
u(f_\varepsilon(t))> S_\varepsilon(t),
\end{cases}
\quad\text{on}\quad  [t_0,t_0+\delta].
\]
As a consequence $\dot{p}_{\varepsilon}$ is bounded a.e. on $[t_0,t_0+\delta]$ independently of $\varepsilon$.		

Finally, the case when $f_{\varepsilon}(t_0)>\st{c}$ is treated analogously.		
Consequently, $p_{\varepsilon}$ is bounded in $\mathrm{W}^{1,\infty}(0,T)$ independently of $\varepsilon>0$.
\medskip
		
\noindent\underline{Step 2: Convergence of $S_\varepsilon-p_{\varepsilon}$ to zero in $\mathrm{L}^q(0,T)$ for arbitrary $q \in (1,\infty)$.}\smallskip
		
%We show that $(S_\varepsilon - p_{\varepsilon})$ converges to zero in $\mathrm{L}^q(0,T)$ .
		
		%Because $N_\varepsilon >\delta(T)$ and $\dot{N_\varepsilon}$ and $\dot{F}$ are bounded in $\mathrm{L}^{\infty}(0,T)$ independently of $\varepsilon$, and since $\dot{p}_{\varepsilon}=0$ if $p_{\varepsilon}=S_\varepsilon$, the right side is bounded independently of $\varepsilon$ for all $t\in [0,T]$.
		
For arbitrary $\varepsilon>0$ and $t\in[0,T]$, we have 
\begin{multline*}
|S_\varepsilon(t)-p_{\varepsilon}(t)| = |S_\varepsilon(0)-p_{\varepsilon}(0)| + \int_0^t (\dot{S_\varepsilon}(\tau) - \dot{p}_{\varepsilon}(\tau)) \frac{S_\varepsilon(\tau)-p_{\varepsilon}(\tau)}{|S_\varepsilon(\tau)-p_{\varepsilon}(\tau)|} d\tau\\
= |S_\varepsilon(0)-p_{\varepsilon}(0)| + \int_0^t \underbrace{\dot{S_\varepsilon}(\tau) \frac{S_\varepsilon(\tau)-p_{\varepsilon}(\tau)}{|S_\varepsilon(\tau)-p_{\varepsilon}(\tau)|}}_{\le 0} d\tau - \int_0^t \dot{p}_{\varepsilon}(\tau) \frac{S_\varepsilon(\tau)-p_{\varepsilon}(\tau)}{|S_\varepsilon(\tau)-p_{\varepsilon}(\tau)|} d\tau,
\end{multline*}
where the above inequality follows from the 
definition of $p_{\varepsilon}$ 
%is defined such that
%		\[
%		\dot{S_\varepsilon}(\tau) \frac{S_\varepsilon(\tau)-p_{\varepsilon}(\tau)}{|S_\varepsilon(\tau)-p_{\varepsilon}(\tau)|} \leq 0
%		\]
for all $\tau$, see Figure~\ref{pop_pic_projection}.
Hence,
\begin{equation}\label{pop_dnymics_stock_estimate}
|S_\varepsilon(t)-p_{\varepsilon}(t)| + \int_0^t \left|\dot{S_\varepsilon}(\tau) \frac{S_\varepsilon(\tau)-p_{\varepsilon}(\tau)}{|S_\varepsilon(\tau)-p_{\varepsilon}(\tau)|} \right|d\tau 
\leq |S_\varepsilon(0)-p_{\varepsilon}(0)| + \int_0^t |\dot{p}_{\varepsilon}(\tau)|d\tau,
\end{equation}
and the bounds of Step 1 imply that the right side of \eqref{pop_dnymics_stock_estimate} and thus $|S_\varepsilon(t)-p_{\varepsilon}(t)|$ is bounded independently of $\varepsilon$.
		
Similar, we calculate 
\begin{align*}
(S_\varepsilon(t)-p_{\varepsilon}(t))^2 &= (S_\varepsilon(0)-p_{\varepsilon}(0))^2 + 2\int_0^t (\dot{S_\varepsilon}(\tau) - \dot{p}_{\varepsilon}(\tau)) (S_\varepsilon(\tau)-p_{\varepsilon}(\tau)) d\tau,
%&= (S_\varepsilon(0)-p_{\varepsilon}(0))^2 + 2\int_0^t (\dot{S_\varepsilon}(\tau)) (S_\varepsilon(\tau)-p_{\varepsilon}(\tau)) d\tau\\
%& - 2\int_0^t (\dot{p}_{\varepsilon}(\tau)) (S_\varepsilon(\tau)-p_{\varepsilon}(\tau)) d\tau.
\end{align*}
and
\begin{multline*}
(S_\varepsilon(t)-p_{\varepsilon}(t))^2 - 2\int_0^t \dot{S_\varepsilon}(\tau) (S_\varepsilon(\tau)-p_{\varepsilon}(\tau)) d\tau\\ =(S_\varepsilon(0)-p_{\varepsilon}(0))^2 - 2\int_0^t \dot{p}_{\varepsilon}(\tau) (S_\varepsilon(\tau)-p_{\varepsilon}(\tau)) d\tau.
\end{multline*}
For $t=T$, the above boundedness of $|S_\varepsilon-p_{\varepsilon}|$ and $\dot{p}_{\varepsilon}$ together with
$\dot{S_\varepsilon}(\tau) (S_\varepsilon(\tau)-p_{\varepsilon}(\tau)) \leq 0$
yields that $\dot{S_\varepsilon}(S_{\varepsilon}-p_{\varepsilon})$ is bounded in $L^1(0,T)$ independent of $\varepsilon$. Hence, 
$$
\varepsilon \|\dot{S_\varepsilon} (S_\varepsilon-p_{\varepsilon})\|_{L^1(0,T)} \xrightarrow{\varepsilon\to 0} 0
\quad\Longrightarrow\quad
 \varepsilon \dot{S_\varepsilon}(\tau)(S_{\varepsilon}(\tau)-p_{\varepsilon}(\tau)) \xrightarrow{\varepsilon\to 0} 0
\quad \text{for a.a. } \ \tau \in(0,T).
$$
%Moreover, for a.e. $\tau \in (0,T)$
%\[
%\dot{S_\varepsilon}(\tau) (S_\varepsilon(\tau)-p_{\varepsilon}(\tau)) = \frac{1}{\varepsilon}(F(\tau) - c_i(S_\varepsilon(\tau),N_\varepsilon(\tau),F(\tau))N_\varepsilon(\tau))(S_{\varepsilon}(\tau)-p_{\varepsilon}(\tau)) \leq 0.
%\]
By the definition of $\varepsilon	\dot{S_\varepsilon}=F(\tau) - c N_\varepsilon$, it follows 
\begin{align*}
(F(\tau) - c(S_\varepsilon(\tau),N_\varepsilon(\tau),F(\tau))N_\varepsilon(\tau))\,(S_{\varepsilon}(\tau)-p_{\varepsilon}(\tau))\xrightarrow{\varepsilon\to 0} 0, \qquad \text{a.e. } 
\tau\in (0,T),
\end{align*}
which implies 
$S_{\varepsilon}(\tau) - p_{\varepsilon}(\tau)  \xrightarrow{\varepsilon\to 0} 0$ whenever 
 $(F(\tau) - c(S_\varepsilon(\tau),N_\varepsilon(\tau),F(\tau))N_\varepsilon(\tau))$ 
should be bounded away from zero.  
		%Because 
		%\[(G(S_{\varepsilon}(\tau),N_{\varepsilon}(\tau),F(\tau)) - l(S_{\varepsilon}(\tau),N_{\varepsilon}(\tau),F(\tau)) S_{\varepsilon}(\tau))(S_{\varepsilon}(\tau)-p_{\varepsilon}(\tau))\]
		%is continuous, the convergence holds for all $\tau$.
		
%		So whenever for $\tau$ 
%		\[
%		(F(\tau) - c_i(S_\varepsilon(\tau),N_\varepsilon(\tau),F(\tau))N_\varepsilon(\tau))
%%		G(S_{\varepsilon}(\tau),N_{\varepsilon}(\tau),F(\tau)) - l(S_{\varepsilon}(\tau),N_{\varepsilon}(\tau),F(\tau)) S_{\varepsilon}(\tau)
%		\]
%		is bounded away from zero as $\varepsilon\rightarrow 0$ then 
%		\[
%		S_{\varepsilon}(\tau) - p_{\varepsilon}(\tau) \rightarrow 0.
%		\]
%		
		
Alternatively, if  
\[
F(\tau) - c(S_\varepsilon(\tau),N_\varepsilon(\tau),F(\tau))N_\varepsilon(\tau)
=(f_\varepsilon(\tau) - c(S_\varepsilon(\tau),N_\varepsilon(\tau),F(\tau)))N_\varepsilon(\tau) \xrightarrow{\varepsilon\to 0} 0
\]
it follows from the definition of $p_{\varepsilon}$ and $c$ for either $c_1$ or $c_2$, and because $N_\varepsilon(\tau)>\delta(T)$, that
\[
S_{\varepsilon}(\tau) - p_{\varepsilon}(\tau) \xrightarrow{\varepsilon\to 0} 0.
\]
		
Finally, since $|S_{\varepsilon}(\tau) - p_{\varepsilon}(\tau)| \leq 2S_{\max}$, Lebesgue's dominated convergence theorem yields that
$S_{\varepsilon} - p_{\varepsilon}$
converges to zero in $\mathrm{L}^q(0,T)$ for arbitrary $q\in(1,\infty)$.
		
\noindent\underline{Step 3: Weak convergence of a subsequence $\varepsilon\to 0$ and compact embeddings.}\smallskip
		
%We show convergence results for a subsequence of $\varepsilon$:
		
%		Let $2\leq q\leq \infty$ be arbitrary.
We recall from Step 1 that due to $0\leq S_\varepsilon \leq {S}_{\max}$, $F\in \mathrm{W}^{1,\infty}(0,T)$ and $N_{\varepsilon}>\delta(T)$
holds independently of $\varepsilon$, the growth rate function
$\lambda(S_{\varepsilon},N_{\varepsilon},F)=\bigl(\frac{c}{c_{\min}}-1\bigr)$
is bounded independently of $\varepsilon$, which implied that $N_\varepsilon$ is bounded 
in $C([0,T])$ independently of $\varepsilon$.
Moreover, the norms of $f_\varepsilon\in \mathrm{W}^{1,\infty}(0,T)$ are bounded independently of $\varepsilon$.
$0\leq S_\varepsilon \leq {S}_{\max}$ also implies that $S_\varepsilon$ is bounded with respect to the sup norm in $C([0,T])$,
independently of $\varepsilon$.
%\textcolor{red}{Unklar, $S_{\varepsilon}$ k\"onnte beschr\"ankt in Abh\"anigkeit von $\varepsilon$ oszillieren wie $\sin(x/\varepsilon)$. 
%Die Aussage wird schon stimmen, aber wie ohne kompliziertes Gronwallargument zeigen? Das war meine Frage. Allerdings 
%w\"are mir nicht aufgefallen, dass wir irgendwo mehr mehr als Beschr\"anktheit brauchen} 
%Similar, $S_\varepsilon$ is also bounded 
%in $C([0,T])$ independently of $\varepsilon$ \textcolor{red}{this is not obvious, proof?}.
%Also, the proof of Theorem~\ref{Thm:pop_dnymics_existence} yields that the norm of $u_\varepsilon$ in \[\mathrm{W}^{1,\infty}(0,T;\mathrm{L}^2(\Omega))\cap\mathrm{L}^{\infty}(0,T;\mathrm{H}^2(\Omega))\] is bounded independently of $\varepsilon$.
%Note that $\lambda(S_{\varepsilon},N_{\varepsilon},F)=\left(\frac{c(S_{\varepsilon},N_{\varepsilon_k},F)}{c_{\min}}-1\right)$ can be bounded independently of $\varepsilon$.
Moreover, with $\lambda$ bounded independently of $\varepsilon$, a Gronwall argument applied to the mild representation of $u_\varepsilon$ (see also the proof of Theorem~\ref{Thm:pop_dnymics_existence}) yields 
that the following norms of $u_\varepsilon$ are bounded independently of $\varepsilon$:
\begin{align*} \mathrm{C}([0,T];\mathrm{H}^2(\Omega))\cap\mathrm{C}((0,T];\mathrm{C}^{2,\beta}(\overline{\Omega}))\cap \mathrm{C}^{1}([0,T];\mathrm{L}^2(\Omega)) \cap \mathrm{C}^1((0,T];\mathrm{C}^{0,\beta}(\overline{\Omega}))\\
\hookrightarrow \mathrm{W}^{1,\infty}(0,T;\mathrm{L}^2(\Omega))\cap\mathrm{L}^{\infty}(0,T;\mathrm{H}^2(\Omega)).
\end{align*}
%\textcolor{red}{Reference?}
Finally, by Step 1, $p_{\varepsilon}$ is bounded in $\mathrm{W}^{1,\infty}(0,T)$ independently of $\varepsilon$.
		
Consequently, we can extract a subsequence $\{\varepsilon_k\}_k$ and find some 
\[
\lm{S}\in \mathrm{L}^q(0,T)\quad \text{ and }\quad
\lm{u}\in \mathrm{W}^{1,q}(0,T;\mathrm{L}^2(\Omega))\cap\mathrm{L}^{q}(0,T;\mathrm{H}^2(\Omega)),
\] 
such that $(p_{\varepsilon_k},u_{\varepsilon_k})$ converges to $(\lm{S},\lm{u})$ weakly in those spaces.
		
Next, we use the following embeddings, where $(\cdot,\cdot)_{\eta,1}$ denotes real interpolation:
\begin{align*}
\mathrm{W}^{1,q}((0,T);\mathrm{L}^{2}(\Omega))\cap \mathrm{L}^{q}((0,T);\mathrm{H}^2(\Omega)) &\hhookrightarrow \mathrm{C}^\beta((0,T); (\mathrm{L}^{2}(\Omega), \mathrm{H}^{2}(\Omega))_{\eta,1})\\
&\hookrightarrow \mathrm{C}^\beta([0,T]; \mathrm{L}^{2}(\Omega)),
\end{align*}
\begin{align*}
\mathrm{W}^{1,q}((0,T);\mathrm{L}^{2}(\Omega))\cap \mathrm{L}^{q}((0,T);\mathrm{H}^2(\Omega))
&\hhookrightarrow \mathrm{C}([0,T]; (\mathrm{L}^{2}(\Omega), \mathrm{H}^{2}(\Omega))_{\eta,q})\\
&\hookrightarrow \mathrm{C}([0,T]; \mathrm{L}^{2}(\Omega))
\end{align*}
for every $0 < \eta < 1-1/q$ and $0\leq \beta < 1/q'-\eta$,
see e.g. \cite[Theorem 3]{amann1995linear}. 
%\textcolor{red}{Define interpolation, Reference for embedding. What is $\theta$?}
Moreover, $\mathrm{W}^{1,q}(0,T)$ is compactly embedded into $\mathrm{C}([0,T])$.
		
Hence, $u_{\varepsilon_k}$ converges strongly to $\lm{u}$ in $\mathrm{C}([0,T], \mathrm{L}^{2}(\Omega))$ and thus $N_{\varepsilon_k}$ converges strongly to $\lm{N}=N(\lm{u})$ in $\mathrm{C}([0,T])$ while $p_{\varepsilon}$ converges strongly to $\lm{S}$ in $\mathrm{C}([0,T])$.
Since by Step 2, 
$S_{\varepsilon} - p_{\varepsilon}$
converges to zero in $\mathrm{L}^q(0,T)$ as $\varepsilon \rightarrow 0$, we obtain that
\[
S_{\varepsilon_k} \xrightarrow{k\to \infty} \lm{S}\qquad \text{in}\quad \mathrm{L}^q(0,T).
\]		

\noindent\underline{Step 4: $(\lm{S},\lm{u})$ solves the limiting 
system \eqref{pop_limit_pop1}-\eqref{pop_limit_stock3}
and strong convergence.}
%Strong convergence and limit system:}
\smallskip
		
%We show that $(\lm{S},\lm{u})$ solves \eqref{pop_limit_pop1}-\eqref{pop_limit_stock3}, and improve the convergence from Step 3:
		
We will first show by Step 3 and dominated convergence that 
\begin{equation}\label{step4a}
\left(\frac{c(S_{\varepsilon_k},N_{\varepsilon_k},F)}{c_{\min}}-1\right)u_{\varepsilon_k}
\xrightarrow{\varepsilon_k \to 0}
\left(\frac{c	(\lm{S},\lm{N},F)}{c_{\min}}-1\right)\lm{u}
\quad \text{in}\quad \mathrm{L}^{q}((0,T),\mathrm{L}^{2}(\Omega)).
\end{equation}
%		converges to 
%		\[
%		\]
%		in $\mathrm{L}^{q}((0,T),\mathrm{L}^{2}(\Omega))$ with $\varepsilon_k \rightarrow 0$.
By Step 3, the subsequence $u_{\varepsilon_k}$ converges to $\lm{u}$ in $\mathrm{C}([0,T],\mathrm{L}^{2}(\Omega))$.
Moreover, for a further subsequence $\varepsilon_k$ (again denoted by $\varepsilon_k$), it follows 
from $S_{\varepsilon_k}\rightarrow \lm{S}$ in $\mathrm{L}^q(0,T)$
that
\[
\left(\frac{c(S_{\varepsilon_k}(t),N_{\varepsilon_k}(t),F(t))}{c_{\min}}-1\right)
\xrightarrow{\varepsilon_k \to 0} \left(\frac{c(\lm{S}(t),\lm{N}(t),F(t))}{c_{\min}}-1\right) \qquad \text{a.e. in }  [0,T],
\]
where we have used that $c$ is locally Lipschitz continuous due to $N_\varepsilon(t),\lm{N}(t)>\delta(T)>0$ and $F(t) \geq 0$ (recall also   
the conditions of Proposition~\ref{c1Smax} in the case $c=c_1$). Together with $u_{\varepsilon_k}(x,t)\rightarrow\lm{u}(x,t)$ for all $t\in [0,T]$ and a.a. $x\in \Omega$ (w.l.o.g. for the same subsequence $\varepsilon_k$), we obtain 
\[
\left(\frac{c(S_{\varepsilon_k}(t),N_{\varepsilon_k}(t),F(t))}{c_{\min}}-1\right) u_{\varepsilon_k}(x,t)
\xrightarrow{\varepsilon_k \to 0}
\left(\frac{c(\lm{S}(t),\lm{N}(t),F(t))}{c_{\min}}-1\right)\lm{u}(x,t)
\]
for a.a. $t\in [0,T]$ and a.a. $x\in \Omega$.
Moreover, we have derived in Step 1 that
\[
\left|\left(\frac{c(S_{\varepsilon_k},N_{\varepsilon_k},F)}{c_{\min}}-1\right)\right|\leq C
\]
uniformly in $[0,T]$ for some $C>0$ and this estimate holds also when $S_{\varepsilon_k}$ is replaced by any $S$ with $0\leq S \leq S_{\max}$.
Hence, also
\[
\left|\left(\frac{c(\lm{S},\lm{N},F)}{c_{\min}}-1\right)\right|\leq C
\]
is uniformly bounded in $[0,T]$. Therefore, since $u_{\varepsilon_k}(.,t)$ and $\lm{u}(.,t)$ have a common upper bound in $\mathrm{L}^2(\Omega)$ by Step~3, Lebesgue's dominated convergence theorem yields for a.e. $t\in [0,T]$,
\[
\left\|\biggl(\frac{c(S_{\varepsilon_k}(t),N_{\varepsilon_k}(t),F(t))}{c_{\min}}-1\biggr) u_{\varepsilon_k}(\cdot,t)
	-
\biggl(\frac{c(\lm{S}(t),\lm{N}(t),F(t))}{c_{\min}}-1\biggr)\lm{u}(\cdot,t)\right\|_{\mathrm{L}^2(\Omega)}\!\!\!
\xrightarrow{\varepsilon_k \to 0} 0.
\]
Finally, since this sequence is also bounded uniformly in $t\in [0,T]$, using Lebesgue's dominated convergence theorem again implies convergence in $\mathrm{L}^q(0,T)$ and thus \eqref{step4a}.
\smallskip
%		
%		This proves
%		\[
%		\left(\frac{c_i(S_{\varepsilon_k},N_{\varepsilon_k},F)}{c_{\min}}-1\right)u_{\varepsilon_k}
%		\rightarrow
%		\left(\frac{c_(\lm{S},\lm{N},F)}{c_{\min}}-1\right)\lm{u}
%		\]
%		with $\varepsilon_k \rightarrow 0$ in $\mathrm{L}^{q}((0,T),\mathrm{L}^{2}(\Omega))$.
		
		Next, we observe that the Neumann realisation of $-D \Delta$ satisfies maximal parabolic Sobolev regularity on $\mathrm{L}^2(\Omega)$, see e.g. \cite[Theorem 2.9]{disser2015h} as a recent reference with a state-of-the-art context.
		%The negative of Neumann realization of $-D \Delta + 1$ generates an analytic semigroup on $\mathrm{L}^2(\Omega)$ \cite[cf.][Chapter 3.3.2]{yagi2009abstract}.
		%This then holds also for the negative of $-D \Delta$.
		%Since $\mathrm{L}^2(\Omega)$ is a Hilbert space, this is equivalent to the property of maximal parabolic Sobolev regularity \cite[cf.][Comment (iv) after Definition 3.1]{rehberg_optimal}.
As a consequence, %that with $\varepsilon_k \rightarrow 0$,
\begin{align*}
u_{\varepsilon_k} &= (\partial_t -D\Delta)^{-1} \biggl(\frac{c(S_{\varepsilon_k},N_{\varepsilon_k},F)}{c_{\min}}-1\biggr)u_{\varepsilon_k}
\xrightarrow{\varepsilon_k \to 0}
 (\partial_t -D\Delta)^{-1} \biggl(\frac{c(\lm{S},\lm{N},F)}{c_{\min}}-1\biggr)\lm{u}
\end{align*}
in $\mathrm{W}^{1,q}((0,T),\mathrm{L}^{2}(\Omega))\cap \mathrm{L}^{q}((t_0,T),\mathrm{H}^2(\Omega))$.
Since $u_{\varepsilon_k}$ converges to $\lm{u}$ in $\mathrm{L}^{q}((0,T),\mathrm{L}^{2}(\Omega))$, this shows also
\begin{align*}
\lm{u} &= (\partial_t -D\Delta)^{-1} \left(\frac{c(\lm{S},\lm{N},F)}{c_{\min}}-1\right)\lm{u}
\end{align*}
and that the weak convergence of $u_{\varepsilon_k}$ is actually strong. As a consequence, $\lm{u}$ solves the limiting evolution equation 
\begin{alignat*}{2}
\partial_t \lm{u} -D\Delta \lm{u} &= \left(\frac{c(\lm{S},\lm{N},F)}{c_{\min}}-1\right)\lm{u} &&\qquad \text{in } (0,T)\times\Omega,\\
\partial_\nu \lm{u} &= 0 &&\qquad \text{on } (0,T)\times\partial\Omega,\\
\lm{u}(0) &= u_{in}&&\qquad \text{on } \Omega.
%		\int_\Omega \lm{u}(0,x) dx &= N_{in}.
\end{alignat*}
\medskip
		
		%Since the sequence of functions $p_{\varepsilon}$ is bounded uniformly in $\varepsilon$ in $\mathrm{W}^{1,\infty}(0,T)$, it converges weakly in every $\mathrm{W}^{1,q}(0,T)$ space for $1<q<\infty$ and strongly in $\mathrm{C}([0,T])$ to a function $\lm{S}$.
		
		\begin{figure}
			\begin{tikzpicture}[] 
			%\draw[step=.5cm, gray, very thin] (0,0) grid (10,5); 
			\draw[->] (0,0) -- (10,0) coordinate (x axis);
			\draw (-1pt,5 ) node[anchor=east,fill=white] {$\lm{S}$};
			\draw (10 cm,-1pt) node[anchor=north] {$\lm{f}$};
			\draw[->] (0,0) -- (0,5) coordinate (y axis);
			\draw (1 cm,1pt) -- (1 cm,-1pt) node[anchor=north] {$c_{\min}$};
			\draw (5 cm,1pt) -- (5 cm,-1pt) node[anchor=north] {$\st{c}$};
			\draw (2.5 cm,1pt) -- (2.5 cm,-1pt) node[anchor=north] {$\tilde{c}$};
			\filldraw [dashed]
			(4.5, 2) circle (2pt) node[anchor=south] {\tiny $(\lm{f}(t_0),\lm{S}(t_0))$}--
			(5.75,0.75) circle (2pt) node[anchor=north west] {\tiny $(x_l,l(x_l))$}
			node[anchor=north, pos=0.5]{\tiny$\delta_{l}$};
			\filldraw [dashed]
			(4.5, 2) node[anchor=south, above] {}--
			(2.4,1.4/0.6) circle (2pt) node[align=right, left] {\tiny $(x_u,u(x_u))$}
			node[anchor=north, pos=0.5]{\tiny $\delta_{u}$};
			\draw (1pt,3 ) -- (-1pt,3 ) node[anchor=east,fill=white] {$S_{\max}$};
			\draw [blue, thick, domain=0:1] plot (\x, {0});
			\draw [blue, thick, domain=1:2.5] plot (\x, {(\x-1)/(3-\x)});
			\draw [blue] (2 cm +2pt,1cm+1pt) node[anchor=east] {$u$};

			\draw [red, thick, domain=5:((2.5-1)/(3-2.5)+5)] plot (\x, {(\x-5});
			\draw [red] (6cm +3pt,1cm+1pt) node[anchor=west] {$l$};
			\draw [red, thick, domain=((2.5-1)/(3-2.5)+5):9.5] plot (\x, {(2.5-1)/(3-2.5)});
			%c_{\min}=1, \de{c}= 3, \sti{c}=5, \tilde{c}=2.5, S_{\max}=(2.5-1)/(3-2.5)
			\end{tikzpicture}
\caption{$\lm{f}$-$\lm{S}$-phase diagram: Example for $(\lm{f}(t_0),\lm{S}(t_0))$ and $x_u$ and $x_l$}
	\label{pop_pic_limit_phase_diagram}
\end{figure}
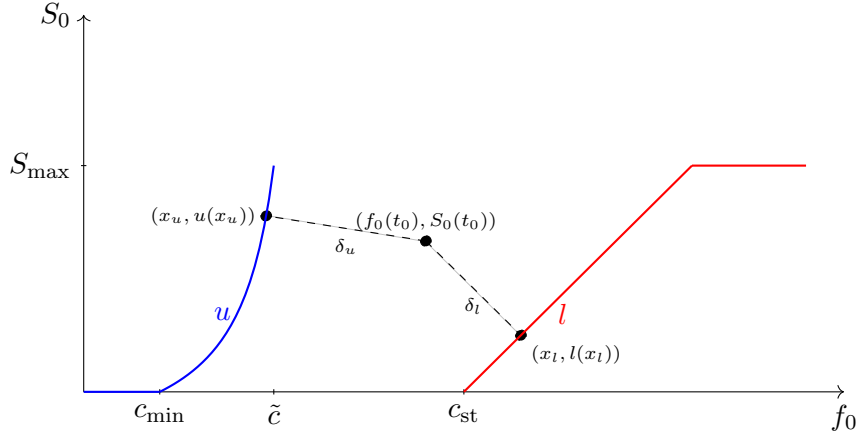
		
We now analyse the limiting behaviour of $\lm{S}$ in dependence on
\[
\lm{f}=F/\lm{N}.
\]
Assume first that at a time $t_0\in [0,T]$,
the point $(\lm{f}(t_0),\lm{S}(t_0))$ is located in the $\lm{f}$-$\lm{S}$-phase space diagram between the graphs $(x,l(x))$ for $x\geq \st{c}$ and $(x,u(x))$ with $x\leq \tilde{c}$, cf. Figure~\ref{pop_pic_limit_phase_diagram} (in the case of $c=c_2$)
and recall that $\lm{S}\le S_{\max}$.

We will show that this assumption implies $\dot{\lm{S}}=0$ a.e. on a sufficiently small interval $J$ with $t_0\in J$:
For the point $(\lm{f}(t_0),\lm{S}(t_0))$, we introduce {any} nearest point $(x_u,u(x_u))$ and the associated distance $\delta_u$, i.e. 
\[
{\delta}_{u}=\|(x_u,u(x_u)))-(\lm{f}(t_0),\lm{S}(t_0))\| 
\quad\text{with}\quad x_u{\in} \!\!\!
 \argmin\limits_{(x,u(x)),\, x\leq \tilde{c}}\|(x,u(x))-(\lm{f}(t_0),\lm{S}(t_0))\|.
\]
{Note that $(x_u,u(x_u))$ is unique due to the convexity of $u$. However, the following argument holds also for general increasing functions $u$.}
Analog, we defined {any} nearest point $(x_l,l(x_l))$ and its distance ${\delta}_l$.
{Again, $(x_l,l(x_l))$ is unique due to the concavity of $l$, but the argument holds for general increasing functions $l$.}
%\textcolor{red}{Comment on generalisation requiring convexity resp. concavity?} 
		
Due to the uniform convergence of $f_{\varepsilon_k}$ to $\lm{f}$ and of $p_{\varepsilon_k}$ to $\lm{S}$, as shown in Step~3, we can choose $\varepsilon_0$ sufficiently small such that for all $\varepsilon_k<\varepsilon_0$ holds
\[
	|(f_{\varepsilon_k},p_{\varepsilon_k})-(\lm{f},\lm{S})|<\frac{1}{4}\min \{\delta_u,{\delta}_l\}
\]
uniformly in $t\in [0,T]$.  Hence, for all $\varepsilon_k<\varepsilon_0$,
the point $(f_{\varepsilon_k}(t_0),p_{\varepsilon_k}(t_0))$ has a distance larger than 
$\frac{3}{4}\min \{\delta_u,{\delta}_l\}$
to the graphs $(x,l(x))$ for $x\geq \st{c}$ and $(x,u(x))$ with $x\leq \tilde{c}$.

As a consequence, there exists an open interval $I\ni t_0$ such that  for all $t\in I$ the distance between $(f_{\varepsilon_k}(t),p_{\varepsilon_k}(t))$ and the graphs $(x,l(x))$ and $(x,u(x))$ is greater than
$\frac{1}{2}\min \{\delta_u,{\delta}_l\}$.
From Steps 2 and 3, we know that $(f_{\varepsilon_k}(t),S_{\varepsilon_k}(t))$ converges to $(f_{\varepsilon_k}(t),p_{\varepsilon_k}(t))$  a.e. in $I$.
Moreover, $f_{\varepsilon_k}$ converges  uniformly to $\lm{f}$ and is Lipschitz continuous with a modulus independent of $\varepsilon_k$.
Therefore, there is some $t_1\in I$, $t_1<t_0$, with
\[
|(f_{\varepsilon_k}(t_1),S_{\varepsilon_k}(t_1))-(f_{\varepsilon_k}(t_1),p_{\varepsilon_k}(t_1))|< \frac{1}{4}\min \{\delta_u,{\delta}_l\}
\]
for all $\varepsilon_k$  sufficiently small (i.e. by eventually choosing $\varepsilon_0$ smaller).
Moreover, for each $\varepsilon_k$ and for some $t\in [0,T]$, if $(f_{\varepsilon_k}(t),S_{\varepsilon_k}(t))$ is located between the graphs $(x,l(x))$ and $(x,u(x))$, then $S_{\varepsilon_k}$ remains constant until the first time $\tilde{t}>t$ when 
\[
	S_{\varepsilon_k}(\tilde{t})=u\bigl(f_{\varepsilon_k}(\tilde{t})\bigr)\qquad \text{or} \qquad
	S_{\varepsilon_k}(\tilde{t})=l\bigl(f_{\varepsilon_k}(\tilde{t})\bigr).
\]
With the Lipschitz modulus of $f_{\varepsilon_k}$ being independent of  $\varepsilon_k$, 
the trajectory $(S_{\varepsilon_k},f_{\varepsilon_k})$ keeps for all $\varepsilon_k<\varepsilon_0$ a positive distance to the graphs $(x,l(x))$ and $(x,u(x))$ 	
on an interval $J:=I\cap [t_1,T]\ni t_0$ if $I$ is chosen sufficiently small.
Furthermore on $J$, it follows by definition that 
\[
	S_{\varepsilon_k}=p_{\varepsilon_k}\qquad
	\text{and}\qquad 
	\dot{p}_{\varepsilon_k} = 0 
	\qquad\text{a.e. in}\quad J\quad \text{and for all}\quad \varepsilon_k < \varepsilon_0.
\]
%, then there is an open interval containing $t$, the length of which only depends on the Lipschitz modulus of $\lm{f}$, such that $\dot{S_{\varepsilon_k}}=0$ on the whole interval and for all $\varepsilon_k<\varepsilon_0$. Choosing $t$ close to $t_0$ it follows that $\dot{S_{0,\varepsilon_k}}=0$ on a small interval containing $t_0$.
	%This argument shows that $\dot{S_{0,\varepsilon_k}}=0$ on the whole interval $I$.
Hence, for $t\in J$, 
\[
		\lm{S}(t)=\lim_{k\to\infty} p_{\varepsilon_k}(t) = \lim_{k\to\infty} p_{\varepsilon_k}(t_1) = \lm{S}(t_1)
\]
and $\dot{\lm{S}}=0$ a.e. in $J$ as claimed.
\medskip
		
We will now consider the case when $\lm{S}(t_0) = u(\lm{f}(t_0))$:
For $t\in [t_0-\delta,t_0+\delta]$ with $\delta$ chosen in \eqref{asdf} (which excludes $(\lm{f},\lm{S})$ to reach the graph $(x,l(x))$ in $[t_0-\delta,t_0+\delta]$), we know from the uniform convergence of $f_{\varepsilon_k}$ to $\lm{f}$, that $\dot{S}_{\varepsilon_k}(t)\leq 0$
 for $\varepsilon_k$ sufficiently small, cf. Figure~\ref{pop_pic_projection}.
We claim that therefore %for $\varepsilon_k$  small  
\[
	\dot{p}_{\varepsilon_k}\leq 0, \qquad \text{a.e. in}\quad [t_0-\delta,t_0+\delta].
\]
The proof assumes in contradiction that for some $t_1<t_2\in [t_0-\delta,t_0+\delta]$ and for some 
$0<\varepsilon_k<\varepsilon_1$ for $\varepsilon_1$ chosen sufficiently small
\[
	p_{\varepsilon_k}(t_2)>p_{\varepsilon_k}(t_1).
\]		
The ${\varepsilon_k}$-independent Lipschitz continuity of $p_{\varepsilon_k}$ (see Step 1), implies the existence of a time $t_3\in (t_1,t_2)$ and a constant $\delta_1>0$ such that
\[
	p_{\varepsilon_k}(t_2)-\delta_1 >p_{\varepsilon_k}(t)\qquad\text{for all}\quad t\in [t_1,t_3]\quad\text{and all}\quad 
	\varepsilon_k < \varepsilon_1,
\]
with $0<\varepsilon_1$ sufficiently small.
%\textcolor{blue}{Why $\varepsilon_1$? Reference}
Due to the a.e. pointwise convergence of $S_{\varepsilon_k}$ to $p_{\varepsilon_k}$, 
there exists a time $t_4\in [t_1,t_3]$ such that %and some $\varepsilon_3<\varepsilon_2$  such that for some $\delta_2<\delta_1$
\[
 S_{\varepsilon_k}(t_4) < p_{\varepsilon_k}(t_2)-\frac{\delta_1}{2}\qquad\text{for all}\quad  \varepsilon_k< \varepsilon_1,
\]
where $\varepsilon_1$ might again be chosen smaller.  
Now, because $\dot{S}_{\varepsilon_k}\leq 0$ on $[t_0-\delta,t_0+\delta]$, we have
\[
	S_{\varepsilon_k}(t) < p_{\varepsilon_k}(t_2)-\frac{\delta_1}{2}\qquad \text{for all}\quad t\in [t_4,t_2] \quad
	\text{and all} \quad \varepsilon_k< \varepsilon_1.
\]
In return, by using again that $S_{\varepsilon_k}$ converges a.e. to $p_{\varepsilon_k}$,		
\[
	p_{\varepsilon_k}(t) < p_{\varepsilon_k}(t_2)-\frac{\delta_1}{4}\qquad \text{for a.a.}\quad t\in [t_4,t_2]\quad
	\text{and all}\quad \varepsilon_k< \varepsilon_1,
\]
for $\varepsilon_1$ chosen sufficiently small. 
By the continuity of $p_{\varepsilon_k}$, this estimate holds for all $t\in [t_4,t_2]$, which gives the contradiction 
\[
		p_{\varepsilon_k}(t_2) < p_{\varepsilon_k}(t_2)-\frac{\delta_1}{4}.
\]
Hence, we have proven 
\[
\dot{p}_{\varepsilon_k}\leq 0\qquad \text{a.e. in}\quad [t_0-\delta,t_0+\delta]
\]
and for $\varepsilon_k$ sufficiently small.
Since $p_{\varepsilon_k}$ converges to $\lm{S}$ uniformly and weakly in $\mathrm{W}^{1,\infty}(0,T)$, this also yields
\[
\dot{\lm{S}}\leq 0\qquad\text{a.e. in}\quad  [t_0-\delta,t_0+\delta].
\]
		
Altogether, by combining the situations with $\dot{\lm{S}}=0$ a.e. and $\dot{\lm{S}}\le 0$ a.e., we have proven that
$\dot{\lm{S}}<0$ 
in a set of positive measure is only possible if a.e. in this set
$
\lm{S}=u(\lm{f}).
$ 
In the analog case when $\lm{S}(t_0) = l(\lm{f}(t_0))$, it follows similarly that
\[
	\dot{\lm{S}}\geq 0
\]
and that $\dot{\lm{S}}>0$
in a set of positive measure is only possible if a.e. in this set
$\lm{S} = l(\lm{f})$.

Summarising, we have shown that $\lm{S}$ satisfies for a.e. $t\in [0,T]$
\begin{align*}
&\dot{\lm{S}}(t)(\lm{S}(t)-z) \leq 0 \qquad 
\begin{cases}
\text{for all}\quad z\in [0,u(\lm{f}(t))] & \text{if}\quad \lm{f}(t)\leq \tilde{c},\\[1.7mm]
\text{for all}\quad z\in [l(\lm{f}(t)),S_{\max}] & \text{if}\quad \lm{f}(t)\geq \st{c},
\end{cases}\\
&\dot{\lm{S}}(t)=0,\hspace{7.5cm} \text{ if }\quad \tilde{c}< \lm{f}(t)< \st{c}.
\end{align*}
Because $0\leq \lm{S} \leq S_{\max}$, this shows that $\lm{u}$ and $\lm{S}$ solve \eqref{pop_limit_pop1}-\eqref{pop_limit_stock3}.
\medskip
		
\noindent\underline{Step 5: Uniqueness of solutions $(\lm{u},\lm{S})$ to the limiting system \eqref{pop_limit_pop1}-\eqref{pop_limit_stock3}.}\smallskip

The limit $\lm{S}$ is a generalised play for the Lipschitz continuous curves $U$ and $L$ with input $\lm{f}=F/\lm{N}$.
By \cite[III.2. Theorem 2.2]{visintin2013differential} this generalised play is a Lipschitz continuous hysteresis operator from $\mathrm{C}([0,T])\times \mathbb{R}$ to $\mathrm{C}([0,T])$. Since 
$\lm{N}(t)>\delta(T)>0$ uniformly in $[0,T]$, also $\lm{f}=F/\lm{N}$ is Lipschitz continuous.
The uniqueness of $(\lm{u},\lm{S})$ then follows by using a Gronwall argument, see e.g. \cite{visintin2013differential}.
		
The regularity properties of $\lm{u}$ follow essentially as in the proof of Theorem \ref{Thm:pop_dnymics_existence}. Since $\lm{f}\in \mathrm{W}^{1,\infty}(0,T)$, also $\lm{S}\in \mathrm{W}^{1,\infty}(0,T)$ by \cite[III.2. Theorem 2.3]{visintin2013differential}.
\medskip
		
\noindent\underline{Step 6: Convergence of the whole sequence $\varepsilon\to 0$.}
\smallskip
		
		Because every sequence $\{\varepsilon\}$ with $\varepsilon \rightarrow 0$ has a subsequence $\{\varepsilon_k\}_k$ such that  \[u_{\varepsilon_k} \rightarrow \lm{u}\] in $\mathrm{W}^{1,q}(0,T;\mathrm{L}^2(\Omega))\cap\mathrm{L}^{q}(0,T;\mathrm{H}^2(\Omega))$ and
		\[S_{\varepsilon_k} \rightarrow \lm{S}\]
		in $\mathrm{L}^{q}(0,T)$, uniqueness of the limit implies convergence of the whole sequence $(u_{\varepsilon},S_{\varepsilon})$.
\end{proof}

%\section{Numerical examples}\label{examples}
\section{Numerical examples and discussion %of hysteresis-reaction-diffusion interplay
}\label{examples}
In this section, we present selected examples of the behaviour of hysteresis-reaction-diffusion systems
on the one-dimensional domain $\Omega= (0,1)$.  

The first example depicts the behaviour of the population-hysteresis system \eqref{pop_limit_pop1}--\eqref{pop_limit_stock3} subject to a time-periodic food supply $F(t)$. 

The subsequent examples detail the interplay between the geometric properties of the two defining boundary curves $\mathcal{U}$ and $\mathcal{L}$ (see below) of a generalised play operator and 
a reaction-diffusion model. For the sake of a clear  discussion, those examples consider a 
simplest hysteresis-reaction-diffusion model and compare the numerical simulation with a 
Fourier analysis of the analytic solution. Generalisations of our observations to systems of hysteresis-reaction-diffusion equations are possible.

\subsubsection*{Numerical method} 
All numerical examples are implemented and simulated in Matlab using a uniform mesh with $99$ elements for the domain $\Omega = (0,1)$.
%, and without mesh refinement.
The discretised Neumann realisation of the Laplace operator $-\Delta$ is computed by a standard 3-point stencil finite difference scheme and the corresponding discretised eigenfunctions $\phi_k$, $k\geq 0$, together with their eigenvalues $\lambda_k$ can be computed by explicit formulas. The eigenfunctions are only used for the specification of initial data.
%\textcolor{blue}{Are the eigenfunctions used for the numerics? I guess only for the initial data. Scheme first order?, central differences? .... }
%For the diffusion coefficient we choose $D = \frac{1}{\lambda_1}$. 

The hysteresis operator is approximated by an ODE regularisation of the variational inequality \eqref{Eq:Hyst_op}, similar as in \cite{brokate2013optimal} or \cite{kuehn2017generalized}.
As regularisation parameter we use $\varepsilon = 10^{-4}$.
The resulting approximative ODE-reaction-diffusion-equation is solved by a semi-implicit time-stepping procedure 
with implicit diffusion and explicit reaction with a step size $dt = 7.5*10^{-5}$.
%\textcolor{blue}{numerical scheme tested by what? Ich habe lediglich verschiedene Schrittweiten und Werte für $\varepsilon$ ausprobiert. Vielleicht würde ein Kommentar nicht schaden, dass die Numerik eine Intuition geben soll und Stabilitätsfragen nicht im Vordergrund standen und nicht weiter verfolgt wurden.}

%collect specific parameters which define  and analyse their impact on the evolution of the solution of a hysteresis-reaction-diffusion equation. Depending on the two boundary curves which define the hysteresis operator, different phenomena are observed.
%We do not give rigorous proves for many of our observations, but rather create an intuition for important parameters in hysteresis-reaction-diffusion systems or equations.
%In particular, we illustrate our observations with a numerical example in Subsection~\ref{Subsec:Discussion of numerical Examples}.
%In order to emphasise that this section differs from the model discussed so far, we adapt the notation in several parts.

\begin{figure}[b]
	\input{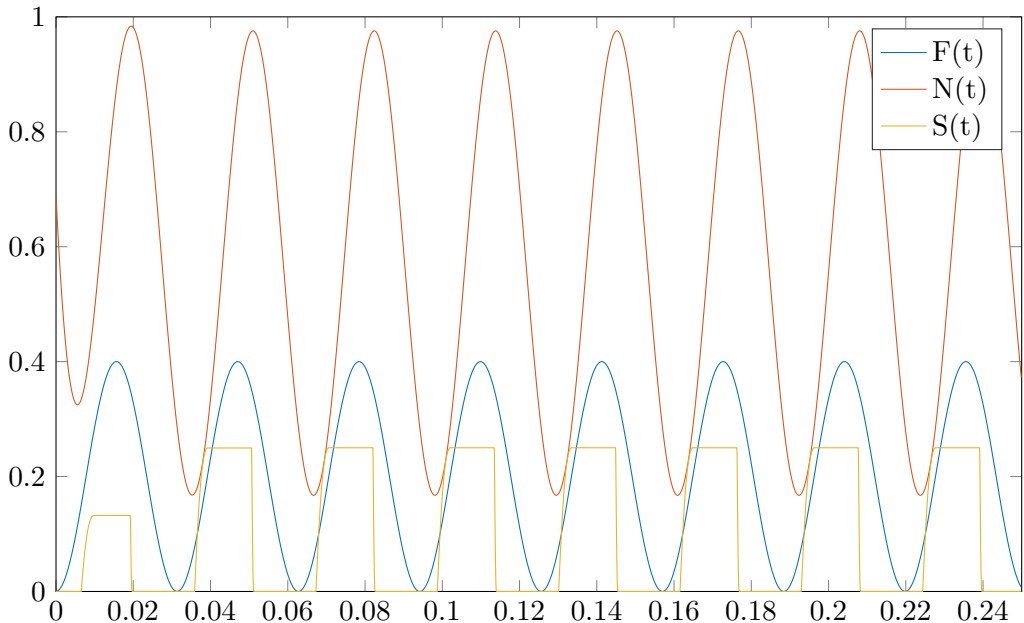}
\caption{Evolution of the population-hysteresis-diffusion system \eqref{pop_limit_pop1}--\eqref{pop_limit_stock3}:
Time-periodic food supply $F(t)=0.2(1-\cos(t))$ (blue) and the resulting population size $N$ (red) and stock levels $S$ (yellow).
	}\label{Fig:PlotNFS}
\end{figure}

\subsection{Simulation of the hysteresis-reaction-diffusion system \eqref{pop_limit_pop1}--\eqref{pop_limit_stock3}}\hfill\\
The first example illustrates the 
population dynamical model \eqref{pop_limit_pop1}--\eqref{pop_limit_stock3} in case of consume rate function $c_2$ with parameters $c_{\min}= 0.35$, $\de{c}^\infty=0.4$, $\st{c}=0.45$ and $S_{\max}= 0.3$. In particular, the food supply is given as $F(t)=0.2(1-\cos(t))$.
Moreover, the initial data where set to
$N_{in}= 7$,
$u_{in} = N_{in}\phi_1 + 2.5\phi_2 + 2.5\phi_3$ and
$S_{in} = 0$.

Figure~\ref{Fig:PlotNFS} depicts the evolution of the scalar quantities $F$, $N$ and $S$. More details can be observed in a simulation video (see supplementary material). The video shows the time evolution of $u,S,F$ and $F/N(u)$.
The upper plot in the video depicts the current location of $(F/N(u),S)$ (red dot) in phase space in relation to the upper (magenta) and lower (cyan) boundary curves of 
the limiting generalised play operator
and it can be verified that $S$ indeed approximates this generalised play operator.
The legend also shows the current value of $S$ (see also Figure~\ref{Fig:PlotNFS}) as well as the maximal value of $S$ during the previous cycle. The value is updated every time when $S$ starts to decrease. 
The lower plot shows the evolution of the population density $u$ (blue).

\subsubsection*{Discussion}
We observe that the hysteresis cycles gain initially amplitude before saturating. Qualitatively spoken, the system seems to behaves like a nonlinear oscillator, which adapts to the periodic external forcing within a transition phase. 
%Note that 
%Afterwards, the maximal value of $S$ during each cycle decreases, until it finally settles down at $0.2501$.

The simulation video shows in more detail the interplay between the amplitude of the hysteresis cycles, the current stock level $S$ (red), the individual food supply $f=F/N(u)$ (green) and the total food supply $F$ (yellow).
The legend also shows the current value of $N(|u-N(u)|)$, as well as its maximal value during the last cycle. Note that since $N(u-N(u))=0$, we can interpret  $N(|u-N(u)|)$ as a measure of the spatial inhomogeneity
of the population density $u$. 
As done for $S$, the video updates the values whenever $N(|u-N(u)|)$ starts to decrease. 

We observe that although diffusion is clearly dominant since the maximal value of $N(|u-N(u)|)$ decreases, the nonlinear coupling hysteresis-reaction leads nevertheless to large oscillations of $N(|u-N(u)|)$.
In fact, the following example will show that 
these nonlinear effects can be so strong as to prevent 
spatial homogenisation and lead to 
the growth of spatial inhomogeneities.

\subsection{Interplay between scalar hysteresis and a reaction-diffusion equation}

\subsubsection{A simplest hysteresis-reaction-diffusion model}\label{Subsec:Introduction of the model}\hfill\\
For $\Omega=(0,1)$ and $D>0$, we consider the hysteresis-reaction-diffusion equation
\begin{equation}\label{Eq:Reac_Diff_Syst}
\begin{aligned}
&\partial_t y -D\Delta y = R y 	&&\qquad \text{on } \quad \Omega\times (0,\infty),\\ 
&\partial_\nu y = 0				&&\qquad \text{on } \quad \partial\Omega\times (0,\infty),\\ 
&y(0) = y_0&&\qquad \text{on }\quad\Omega,
\end{aligned}
\end{equation}
where $R=R(\mathrm{T}y,R_0)$ (or $R(\mathrm{T}y)$ for short)
is a generalised scalar play operator defined according to Lipschitz continuous and strictly monotone increasing boundary curve functions $\mathcal{U}>\mathcal{L}:\mathbb{R}\rightarrow \mathbb{R}$, see e.g. \cite[Chapter III.2]{visintin2013differential}, i.e. 
\begin{equation}\label{Eq:Hyst_op}
\begin{aligned}
	&R(0)= \min \{\max \{\mathcal{L}(\mathrm{T}y(0)) , R_0\} , \mathcal{U}(\mathrm{T}y(0)) \} &&\qquad R_0\in \mathbb{R}\\
	&\dot{R}(t)(R(t)-z) \leq 0 \quad\text{ for all }\quad z\in [\mathcal{L}(\mathrm{T}y(t)), \mathcal{U}(\mathrm{T}y(t))] &&\qquad \text{for a.e. }t>0,\\
	&R(t)\in [\mathcal{L}(\mathrm{T}y(t)), \mathcal{U}(\mathrm{T}y(t))] 						&&\qquad \text{for }t\geq 0.
\end{aligned}
\end{equation}
In \eqref{Eq:Hyst_op}, $T$ is a linear and continuous functional  on $\mathrm{L}^2(\Omega)$ which is independent of time.
In particular, if defined on $C([0,\infty); \mathrm{L}^2(\Omega))$, we find that $(\mathrm{T}y)(t) = (\mathrm{T}y(\cdot,t))$ for $t>0$ and $y\in C([0,\infty); \mathrm{L}^2(\Omega))$ serves as input to a scalar hysteresis operator.  
Specifically, we consider $T$ to be a linear combination of Fourier coefficients of $y$
in terms of the eigenfunctions $\{\phi_k\}_{k\geq 0}$ of the Neumann realisation of the Laplacian $-\Delta$ (see e.g.  \cite{amann1995linear} and recall that the eigenfunctions $\{\phi_k\}_{k\geq 0}$ form an orthonormal basis of $\mathrm{L}^2(\Omega)$ while the eigenvalues satisfy $\lambda_0=0<\lambda_1 < \lambda_2 <\ldots$ and $\lambda_k\rightarrow \infty$ as $k\rightarrow \infty$).
Recalling that $\phi_0$ is a positive constant and
in view of the goal to study the interplay between hysteresis and reaction respectively diffusion, we  
consider $T$ %on $\mathrm{L}^2(\Omega)$ 
as a linear combination of Fourier coefficients $\langle u ,\phi_k \rangle =\langle u ,\phi_k \rangle_{\mathrm{L}^2((0,1))}$ of spatially inhomogeneous eigenfunctions $\{\phi_k(x)\}_{k\geq 1}$:
\begin{equation}\label{Eq:W}
	\mathrm{T}y := \underbrace{k_m}_{>0} \langle y , \phi_m \rangle + \sum_{i=m+1}^{M-1} \underbrace{k_i}_{\ge 0} \langle y , \phi_i \rangle
	+ \underbrace{k_M}_{>0} \langle y , \phi_M \rangle, \qquad 1\le m <M, \quad m,M \in \mathbb{N}.
\end{equation}
Another generic example for $\mathrm{T}$ could be the mean value functional $\mathrm{T}y = |\Omega|^{-1} \int_\Omega y(x) \,dx$.

The system \eqref{Eq:Reac_Diff_Syst}--\eqref{Eq:W}
will be considered subject to non-trivial initial data
$0 \neq y_0\in \lbrace v\in \mathrm{H}^2(\Omega): \partial_\nu v = 0 \text{ on } \partial\Omega \rbrace$.
We choose $y_0$ to be a linear combination of eigenfunctions $\phi_k$.
In particular, we assume 
\begin{equation}\label{y0}
\begin{split}
&0<y_{0}^{(m)} = \langle y_0,  \phi_{m} \rangle, \qquad 0<y_{0}^{(M)} = \langle y_0, \phi_{M} \rangle\quad
\text{and}\\
&0\le y_{0}^{(k)} = \langle y_0 , \phi_{k} \rangle \quad \text{for}\quad  m<  k <  M,\quad\text{while}\quad 
0=y_{0}^{(k)}  \quad\text{for} \quad0\leq k < m.
\end{split}
\end{equation}

\begin{remark}
The assumption $y_{0}^{(k)} =0$ for $0\leq k < m$ is made because those eigenmodes would have no effect on $R(\mathrm{T}y)$. However, the eigenmodes to small eigenfunctions have a much stronger influence on the large-time behaviour of $y$ and would significantly complicate the interpretation of the results in this section. 

Note moreover that inverting the signs $y_{0}^{(m)}$, $y_{0}^{(M)} <0$ and $y_{0}^{(k)} \leq 0$ for $m< k < M$
as well as $k_m,k_M <0$ and $k_{m+1}, \ldots, k_{M-1} \leq 0$ will lead to the same qualitative behaviour 
except that $y$ is replaced by $-y$. 
\end{remark}

\subsubsection{Spatial homogenisation versus grow-up due to hysteresis}\label{dichoto}\hfill\\
In this section, we show that geometric properties of 
generalised play operators \eqref{Eq:Hyst_op} such as 
convexity/concavity or the slope of  
$\mathcal{U}$ resp. $\mathcal{L}$ can have a decisive 
influence on the evolution of the model 
\eqref{Eq:Reac_Diff_Syst}--\eqref{y0}. 
In particular, the scalar hysteresis operator \eqref{Eq:Hyst_op} can decide between spatial homogenisation or unbounded 
growth of spatially inhomogeneous Fourier modes.  
This dichotomy is illustrated in Figures \ref{Fig:phase_diag} and \ref{Fig:solution}.

\begin{figure}[h]
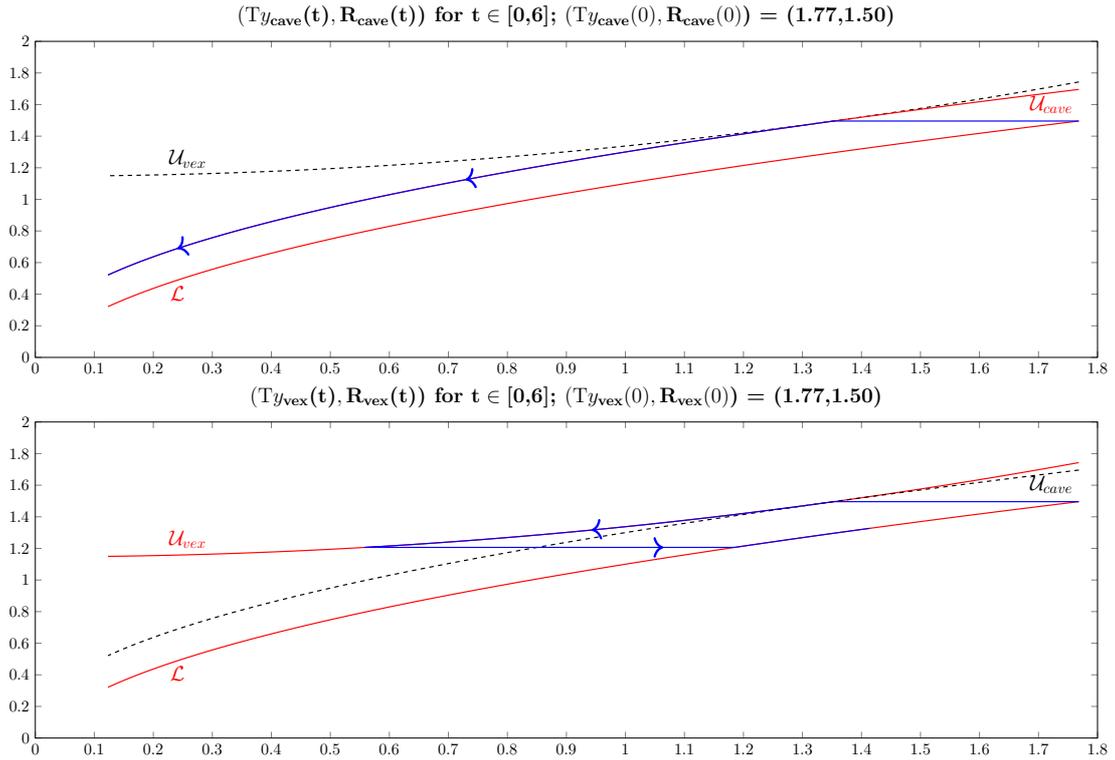

	\centering	
%\scalebox{1}[0.75]{
\include{phase_diagram}
%}
\caption{
%\textcolor{blue}{add arrows in picture to indicate flow, add $\mathcal{U}_{cave}$, $\mathcal{U}_{vex}$ and $\mathcal{L}$ to plot.}   
The top image shows the phase-space evolution of $(\mathrm{T}y_{cave},R_{cave})$ (blue line) subject to $\mathcal{U}_{cave}$ (upper red line) and $\mathcal{L}$ (lower red line). The black dashed line plots $\mathcal{U}_{vex}$ for the sake of comparison.
The bottom image shows the evolution of $(\mathrm{T}y_{vex},R_{vex})$ (blue line) subject to $\mathcal{U}_{vex}$ (upper red line) and $\mathcal{L}$ (lower red line). The black dashed line plots $\mathcal{U}_{cave}$ for the sake of comparison.
Both solutions start at the upper right end of the blue graphs at $(1.77,1.5)$ and continue identically until hitting $\mathcal{U}_{cave}$ resp. $\mathcal{U}_{vex}$ at a point of identical slope. While the decay of $(\mathrm{T}y_{cave},R_{cave})$ to zero yields spatial homogenisation, the turning of $(\mathrm{T}y_{vex},R_{vex})$ leads to 
unbounded growth. 
}\label{Fig:phase_diag}
\end{figure}

\subsubsection*{Description}
Figures~\ref{Fig:phase_diag}  and \ref{Fig:solution} 
depict a numerical simulation of system \eqref{Eq:Reac_Diff_Syst}--\eqref{y0} 
subject to initial data 
$$
y_0 = \frac{\phi_1 + \phi_2}{\|\phi_1 + \phi_2\|_{C([0,1])}}\qquad\text{ and }\qquad R_0=0. 
$$
Moreover, we have set the diffusion coefficient and the parameters of the functional $T$  in \eqref{Eq:W} to
$$
D = \frac{1}{\lambda_1}\qquad\text{and}\qquad 
m=1, M=2,\qquad  k_1 = 0.1, k_2= 0.4.
$$

As boundary curves of the generalised play operator \eqref{Eq:Hyst_op}, we
consider either a concave or a convex upper curve
$\mathcal{U}$ and a concave lower curve $\mathcal{L}$:
\begin{eqnarray*}
&\mathcal{U}_{cave}(z) = 1.2 |z|^{0.5}\mathrm{sign}(z) + 0.1,\qquad\text{resp.}\qquad
\mathcal{U}_{vex}(z)= 0.1097 |z|^{2}\mathrm{sign}(z) + 1.1468,\\
&\mathcal{L}(z) = 1.2 |z|^{0.5}\mathrm{sign}(z) - 0.1.
\end{eqnarray*}
Note that the lacking Lipschitz continuity of 
$\mathcal{U}_{cave}$ and $\mathcal{L}$ at zero is irrelevant since $\mathrm{T}y$ remains positive during the entire simulation. 
%For the concave case (B.ii) we set
%\begin{equation*}
%\begin{split}
%\mathcal{U}(z) &= G_{cave}(z) = 1.2 |z|^{0.5}\mathrm{sign}(z) + 0.1,\\
%\mathcal{L}(z) &= 1.2 |z|^{0.5}\mathrm{sign}(z) - 0.1,
%\end{split}
%\end{equation*}
%
%and for the convex case (B.i) we set
%\begin{equation*}
%\begin{split}
%\mathcal{U}(z) &= G_{vex}(z)= 0.1097 |z|^{2}\mathrm{sign}(z) + 1.1468,\\
%\mathcal{L}(z) &= 1.2 |z|^{0.5}\mathrm{sign}(z) - 0.1.
%\end{split}
%\end{equation*}

In the following, we denote by $(y_{vex},R_{vex})$ the solution of \eqref{Eq:Reac_Diff_Syst} subject to $\mathcal{U}_{vex}$ and by $(y_{cave},R_{cave})$ the solution for $\mathcal{U}_{cave}$.
Figure~\ref{Fig:phase_diag} compares the $\mathrm{T}y$-$R$ phase-space diagrams of two numerical solutions $(\mathrm{T}y_{vex},R_{vex})$ and $(\mathrm{T}y_{cave},R_{cave})$
starting both at the initial point $(1.77,1.5)$.
Hence, both solution trajectories move initially identically to the left at constant $R$-level 
and hit the upper boundary at the same time $t_+=0.15$ at a point 
where $\mathcal{U}_{cave}$ and $\mathcal{U}_{vex}$ share the same slope. 
With $\mathrm{T}y$ continuing to 
decrease, both solutions slide along their respective 
upper boundaries. %$\mathcal{U}$. 
While the top image in Figure~\ref{Fig:phase_diag} depicts the evolution of  $(\mathrm{T}y_{cave},R_{cave})$
according to the concave shape of $\mathcal{U}_{cave}$
the lower images shows $(\mathrm{T}y_{vex},R_{vex})$
following  $\mathcal{U}_{vex}$. 
\medskip

The key difference shown by Figure~\ref{Fig:phase_diag}  is that the solution 
$(\mathrm{T}y_{cave},R_{cave})$ continues to slide along $\mathcal{U}_{cave}$
and thus converges to zero (see discussion below), while $(\mathrm{T}y_{vex},R_{vex})$ features a turning point at time 
$t_0= 1.34$ when $\mathrm{T}y_{vex}$ starts to increase.  
In fact, $(\mathrm{T}y_{vex},R_{vex})$ remains increasing, first at constant $R$-level, later 
sliding along the lower curve $\mathcal{L}$ and will thus become unbounded (see discussion below).

%Figure~\ref{Fig:phase_diag} shows a plot of $(\mathrm{T}y_{vex}(t),R_{vex}(t))$ and $(\mathrm{T}y_{cave}(t),R_{cave}(t))$ in phase space.
%In both cases the solution starts on the graph of $\mathcal{L}$. 
%$G_{vex}$ is constructed such that $(\mathrm{T}y_{vex}(t),R_{vex}(t))$ hits the graph of $G_{vex}$ exactly where $(\mathrm{T}y_{cave}(t),R_{cave}(t))$ hits the graph of $G_{cave}$. This happens both times at $t=t_+=0.15$. The slope of $G_{vex}$ and $G_{cave}$ at this point is the same.
%$\mathrm{T}y_{cave}$ keeps decreasing after $t_+=0.15$ with $R_{cave}(t) = G_{cave}(\mathrm{T}y_{cave}(t))$. The time $t_m>t_+$ when $n_{t_m} = m$, with $n_t$ from Definition~\ref{Def_n_t} below, is the time when $R_{cave}(t_m)= D\lambda_1 = 1$. As we will see, this determines the convergence of $\mathrm{T}y_{cave}$ to zero.
%$\mathrm{T}y_{vex}$ has a turning point at $t_0= 1.34$ and starts to increase again after $t_0$ with $R_{vex}(t)=G_{vex}(\mathrm{T}y_{vex}(t_0))$ constant. Note that $R_{vex}(t)>1$ for all $t\geq 0$ so that $n_{t}>m$. 
%This causes the monotone growth of $\mathrm{T}y_{vex}$ after $t_0$ as will be discussed below.
%At time $t_-=5.31$,  $(\mathrm{T}y_{vex}(t),R_{vex}(t))$ hits the graph of $\mathcal{L}$ and $\mathrm{T}y_{vex}$ increases further for $t\geq t_-$ with $R_{vex}(t) = \mathcal{L}(\mathrm{T}y_{vex}(t))$.

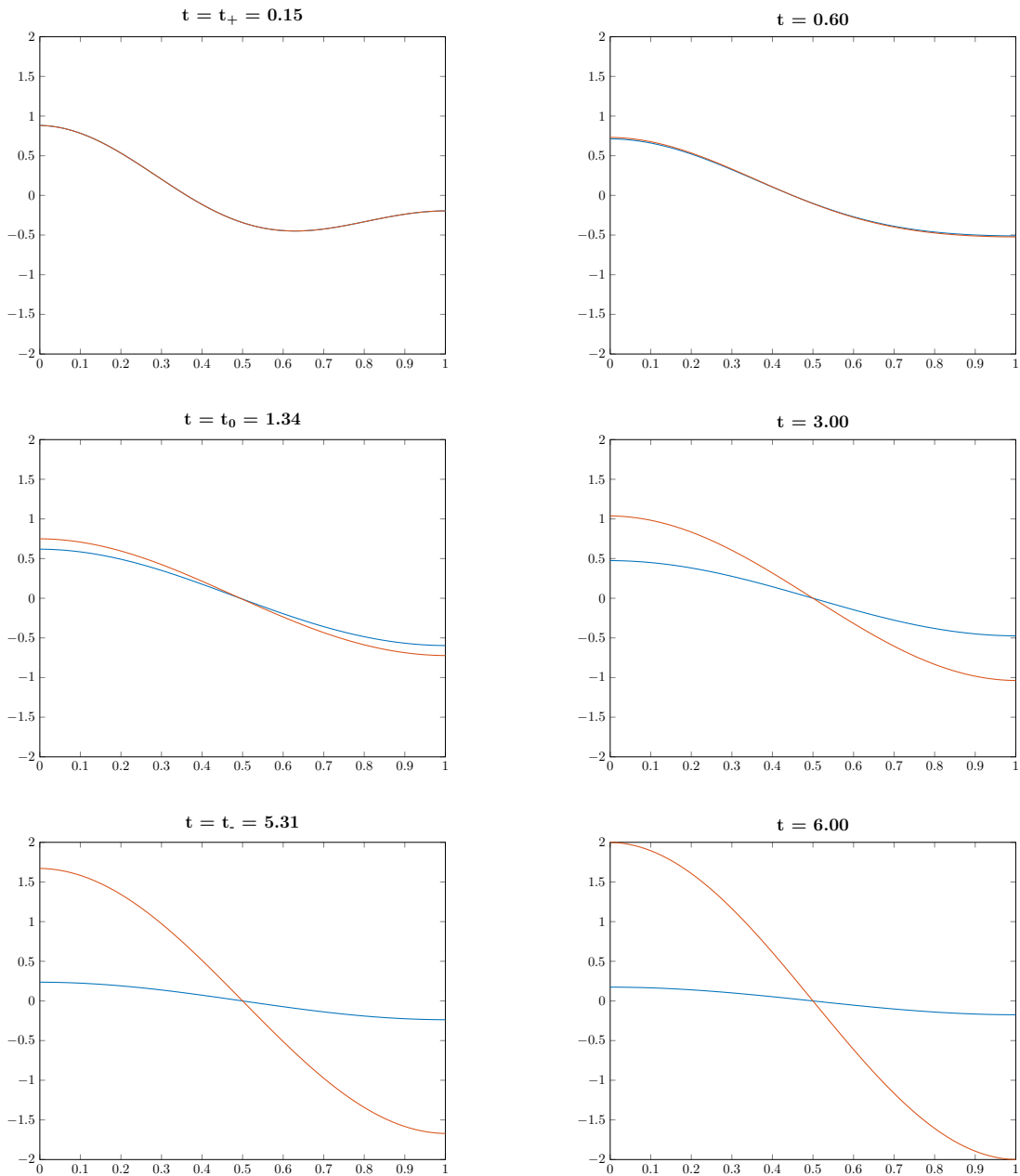
\begin{figure}[h]
	\centering
	
	\begin{subfigure}[b]{0.43\textwidth}
		% This file was created by matlab2tikz.
%
%The latest updates can be retrieved from
%  http://www.mathworks.com/matlabcentral/fileexchange/22022-matlab2tikz-matlab2tikz
%where you can also make suggestions and rate matlab2tikz.
%
\definecolor{mycolor1}{rgb}{0.00000,0.44700,0.74100}%
\definecolor{mycolor2}{rgb}{0.85000,0.32500,0.09800}%
\begin{tikzpicture}[scale=0.50]

\begin{axis}[%
width=4.521in,
height=3.566in,
at={(0.758in,0.481in)},
scale only axis,
xmin=0,
xmax=1,
ymin=-2,
ymax=2,
axis background/.style={fill=white},
title style={font=\bfseries},
title={\Large $\text{t = t}_\text{+}\text{ = 0.15}$}
]
\addplot [color=mycolor1, forget plot, thick]
  table[row sep=crcr]{%
0	0.882132015183305\\
0.0101010101010101	0.880244994308845\\
0.0202020202020202	0.876476826337094\\
0.0303030303030303	0.870839237194056\\
0.0404040404040404	0.863349761781741\\
0.0505050505050505	0.854031679428158\\
0.0606060606060606	0.84291392821027\\
0.0707070707070707	0.830030998481953\\
0.0808080808080808	0.815422806019499\\
0.0909090909090909	0.799134545276168\\
0.101010101010101	0.781216523314291\\
0.111111111111111	0.761723975058173\\
0.121212121212121	0.740716860583301\\
0.131313131313131	0.718259645226794\\
0.141414141414141	0.694421063370369\\
0.151515151515152	0.669273866810097\\
0.161616161616162	0.642894558686663\\
0.171717171717172	0.615363114005384\\
0.181818181818182	0.586762687826848\\
0.191919191919192	0.557179312256287\\
0.202020202020202	0.526701583402701\\
0.212121212121212	0.495420339517026\\
0.222222222222222	0.463428331552159\\
0.232323232323232	0.430819887416314\\
0.242424242424242	0.397690571214853\\
0.252525252525253	0.3641368387943\\
0.262626262626263	0.330255690915697\\
0.272727272727273	0.296144325392665\\
0.282828282828283	0.261899789532524\\
0.292929292929293	0.227618634216572\\
0.303030303030303	0.193396570948101\\
0.313131313131313	0.159328133184056\\
0.323232323232323	0.125506343248335\\
0.333333333333333	0.0920223861018151\\
0.343434343434343	0.0589652912162314\\
0.353535353535354	0.0264216237662179\\
0.363636363636364	-0.00552481368376153\\
0.373737373737374	-0.0367932678630365\\
0.383838383838384	-0.0673063087609667\\
0.393939393939394	-0.0969900898387175\\
0.404040404040404	-0.125774593504409\\
0.414141414141414	-0.153593860928221\\
0.424242424242424	-0.180386205336385\\
0.434343434343434	-0.206094407988679\\
0.444444444444444	-0.230665896112851\\
0.454545454545455	-0.254052902140987\\
0.464646464646465	-0.276212603667019\\
0.474747474747475	-0.297107243620921\\
0.484848484848485	-0.316704230233539\\
0.494949494949495	-0.334976216445905\\
0.505050505050505	-0.351901158498247\\
0.515151515151515	-0.367462353516131\\
0.525252525252525	-0.381648455994188\\
0.535353535353535	-0.394453473161131\\
0.545454545454545	-0.405876739293147\\
0.555555555555556	-0.415922869125703\\
0.565656565656566	-0.424601690596206\\
0.575757575757576	-0.431928157231357\\
0.585858585858586	-0.437922240573145\\
0.595959595959596	-0.442608803115956\\
0.606060606060606	-0.44601745230386\\
0.616161616161616	-0.448182376211536\\
0.626262626262626	-0.449142161604183\\
0.636363636363636	-0.448939595140848\\
0.646464646464646	-0.447621448551626\\
0.656565656565657	-0.445238248681905\\
0.666666666666667	-0.441844033355966\\
0.676767676767677	-0.43749609406756\\
0.686868686868687	-0.43225470655643\\
0.696969696969697	-0.426182850376814\\
0.707070707070707	-0.419345918606654\\
0.717171717171717	-0.411811418884353\\
0.727272727272727	-0.403648666993297\\
0.737373737373737	-0.394928474242896\\
0.747474747474747	-0.385722829918458\\
0.757575757575758	-0.376104580090705\\
0.767676767676768	-0.366147104089125\\
0.777777777777778	-0.355923989951538\\
0.787878787878788	-0.345508710165208\\
0.797979797979798	-0.334974299012629\\
0.808080808080808	-0.324393032827585\\
0.818181818181818	-0.313836114454487\\
0.828282828282828	-0.303373363186173\\
0.838383838383838	-0.293072911432527\\
0.848484848484849	-0.28300090934444\\
0.858585858585859	-0.273221238584978\\
0.868686868686869	-0.263795236402203\\
0.878787878787879	-0.254781431116102\\
0.888888888888889	-0.246235290085693\\
0.898989898989899	-0.238208981171738\\
0.909090909090909	-0.230751148655829\\
0.919191919191919	-0.223906704518162\\
0.929292929292929	-0.217716635914242\\
0.939393939393939	-0.212217829625417\\
0.94949494949495	-0.207442914189638\\
0.95959595959596	-0.203420120347643\\
0.96969696969697	-0.200173160365944\\
0.97979797979798	-0.197721126722025\\
0.98989898989899	-0.196078410559214\\
1	-0.195254640239191\\
};
\addplot [color=mycolor2, forget plot, thick]
  table[row sep=crcr]{%
0	0.882132015183305\\
0.0101010101010101	0.880244994308845\\
0.0202020202020202	0.876476826337094\\
0.0303030303030303	0.870839237194056\\
0.0404040404040404	0.863349761781741\\
0.0505050505050505	0.854031679428158\\
0.0606060606060606	0.84291392821027\\
0.0707070707070707	0.830030998481953\\
0.0808080808080808	0.815422806019499\\
0.0909090909090909	0.799134545276168\\
0.101010101010101	0.781216523314291\\
0.111111111111111	0.761723975058173\\
0.121212121212121	0.740716860583301\\
0.131313131313131	0.718259645226794\\
0.141414141414141	0.694421063370369\\
0.151515151515152	0.669273866810097\\
0.161616161616162	0.642894558686663\\
0.171717171717172	0.615363114005384\\
0.181818181818182	0.586762687826848\\
0.191919191919192	0.557179312256287\\
0.202020202020202	0.526701583402701\\
0.212121212121212	0.495420339517026\\
0.222222222222222	0.463428331552159\\
0.232323232323232	0.430819887416314\\
0.242424242424242	0.397690571214853\\
0.252525252525253	0.3641368387943\\
0.262626262626263	0.330255690915697\\
0.272727272727273	0.296144325392665\\
0.282828282828283	0.261899789532524\\
0.292929292929293	0.227618634216572\\
0.303030303030303	0.193396570948101\\
0.313131313131313	0.159328133184056\\
0.323232323232323	0.125506343248335\\
0.333333333333333	0.0920223861018151\\
0.343434343434343	0.0589652912162314\\
0.353535353535354	0.0264216237662179\\
0.363636363636364	-0.00552481368376153\\
0.373737373737374	-0.0367932678630365\\
0.383838383838384	-0.0673063087609667\\
0.393939393939394	-0.0969900898387175\\
0.404040404040404	-0.125774593504409\\
0.414141414141414	-0.153593860928221\\
0.424242424242424	-0.180386205336385\\
0.434343434343434	-0.206094407988679\\
0.444444444444444	-0.230665896112851\\
0.454545454545455	-0.254052902140987\\
0.464646464646465	-0.276212603667019\\
0.474747474747475	-0.297107243620921\\
0.484848484848485	-0.316704230233539\\
0.494949494949495	-0.334976216445905\\
0.505050505050505	-0.351901158498247\\
0.515151515151515	-0.367462353516131\\
0.525252525252525	-0.381648455994188\\
0.535353535353535	-0.394453473161131\\
0.545454545454545	-0.405876739293147\\
0.555555555555556	-0.415922869125703\\
0.565656565656566	-0.424601690596206\\
0.575757575757576	-0.431928157231357\\
0.585858585858586	-0.437922240573145\\
0.595959595959596	-0.442608803115956\\
0.606060606060606	-0.44601745230386\\
0.616161616161616	-0.448182376211536\\
0.626262626262626	-0.449142161604183\\
0.636363636363636	-0.448939595140848\\
0.646464646464646	-0.447621448551626\\
0.656565656565657	-0.445238248681905\\
0.666666666666667	-0.441844033355966\\
0.676767676767677	-0.43749609406756\\
0.686868686868687	-0.43225470655643\\
0.696969696969697	-0.426182850376814\\
0.707070707070707	-0.419345918606654\\
0.717171717171717	-0.411811418884353\\
0.727272727272727	-0.403648666993297\\
0.737373737373737	-0.394928474242896\\
0.747474747474747	-0.385722829918458\\
0.757575757575758	-0.376104580090705\\
0.767676767676768	-0.366147104089125\\
0.777777777777778	-0.355923989951538\\
0.787878787878788	-0.345508710165208\\
0.797979797979798	-0.334974299012629\\
0.808080808080808	-0.324393032827585\\
0.818181818181818	-0.313836114454487\\
0.828282828282828	-0.303373363186173\\
0.838383838383838	-0.293072911432527\\
0.848484848484849	-0.28300090934444\\
0.858585858585859	-0.273221238584978\\
0.868686868686869	-0.263795236402203\\
0.878787878787879	-0.254781431116102\\
0.888888888888889	-0.246235290085693\\
0.898989898989899	-0.238208981171738\\
0.909090909090909	-0.230751148655829\\
0.919191919191919	-0.223906704518162\\
0.929292929292929	-0.217716635914242\\
0.939393939393939	-0.212217829625417\\
0.94949494949495	-0.207442914189638\\
0.95959595959596	-0.203420120347643\\
0.96969696969697	-0.200173160365944\\
0.97979797979798	-0.197721126722025\\
0.98989898989899	-0.196078410559214\\
1	-0.195254640239191\\
};
\end{axis}
\end{tikzpicture}%
	\end{subfigure}
	\qquad \qquad
	\begin{subfigure}[b]{0.43\textwidth}
		% This file was created by matlab2tikz.
%
%The latest updates can be retrieved from
%  http://www.mathworks.com/matlabcentral/fileexchange/22022-matlab2tikz-matlab2tikz
%where you can also make suggestions and rate matlab2tikz.
%
\definecolor{mycolor1}{rgb}{0.00000,0.44700,0.74100}%
\definecolor{mycolor2}{rgb}{0.85000,0.32500,0.09800}%
\begin{tikzpicture}[scale=0.50]

\begin{axis}[%
width=4.521in,
height=3.566in,
at={(0.758in,0.481in)},
scale only axis,
xmin=0,
xmax=1,
ymin=-2,
ymax=2,
axis background/.style={fill=white},
title style={font=\bfseries},
title={\Large t = 0.60}
]
\addplot [color=mycolor1, forget plot, thick]
  table[row sep=crcr]{%
0	0.713178501898282\\
0.0101010101010101	0.712175122330279\\
0.0202020202020202	0.710170535629272\\
0.0303030303030303	0.707169079855643\\
0.0404040404040404	0.703177245101055\\
0.0505050505050505	0.698203653160051\\
0.0606060606060606	0.692259030542341\\
0.0707070707070707	0.685356174925352\\
0.0808080808080808	0.677509915170769\\
0.0909090909090909	0.668737065052474\\
0.101010101010101	0.659056370866408\\
0.111111111111111	0.648488453115332\\
0.121212121212121	0.637055742483176\\
0.131313131313131	0.624782410334519\\
0.141414141414141	0.61169429399472\\
0.151515151515152	0.59781881708518\\
0.161616161616162	0.583184905206129\\
0.171717171717172	0.567822897276092\\
0.181818181818182	0.551764452852791\\
0.191919191919192	0.535042455774532\\
0.202020202020202	0.517690914474148\\
0.212121212121212	0.499744859329232\\
0.222222222222222	0.481240237422601\\
0.232323232323232	0.462213805095762\\
0.242424242424242	0.442703018685438\\
0.252525252525253	0.422745923839061\\
0.262626262626263	0.402381043809382\\
0.272727272727273	0.38164726713112\\
0.282828282828283	0.360583735083742\\
0.292929292929293	0.339229729344123\\
0.303030303030303	0.317624560230871\\
0.313131313131313	0.295807455938681\\
0.323232323232323	0.273817453156071\\
0.333333333333333	0.251693289453318\\
0.343434343434343	0.229473297819487\\
0.353535353535354	0.207195303717945\\
0.363636363636364	0.184896525018939\\
0.373737373737374	0.162613475155605\\
0.383838383838384	0.140381869836196\\
0.393939393939394	0.118236537630582\\
0.404040404040404	0.0962113347330015\\
0.414141414141414	0.0743390641859435\\
0.424242424242424	0.0526513998317997\\
0.434343434343434	0.0311788152396984\\
0.444444444444444	0.00995051783479892\\
0.454545454545455	-0.0110056115636766\\
0.464646464646465	-0.0316630736111436\\
0.474747474747475	-0.0519967995509872\\
0.484848484848485	-0.0719831944334379\\
0.494949494949495	-0.0916001743244102\\
0.505050505050505	-0.110827197412635\\
0.515151515151515	-0.129645288947631\\
0.525252525252525	-0.148037059965492\\
0.535353535353535	-0.165986719784012\\
0.545454545454545	-0.183480082273226\\
0.555555555555556	-0.200504565931937\\
0.565656565656566	-0.217049187825096\\
0.575757575757576	-0.233104551460959\\
0.585858585858586	-0.248662828710587\\
0.595959595959596	-0.263717735895503\\
0.606060606060606	-0.278264504191949\\
0.616161616161616	-0.292299844522234\\
0.626262626262626	-0.30582190712495\\
0.636363636363636	-0.318830236016317\\
0.646464646464646	-0.331325718574541\\
0.656565656565657	-0.343310530497655\\
0.666666666666667	-0.354788076402962\\
0.676767676767677	-0.36576292635263\\
0.686868686868687	-0.376240748605339\\
0.696969696969697	-0.386228238907948\\
0.707070707070707	-0.395733046653922\\
0.717171717171717	-0.40476369824675\\
0.727272727272727	-0.413329518016639\\
0.737373737373737	-0.421440547047411\\
0.747474747474747	-0.429107460277778\\
0.757575757575758	-0.436341482246829\\
0.767676767676768	-0.443154301857841\\
0.777777777777778	-0.449557986537191\\
0.787878787878788	-0.455564896166333\\
0.797979797979798	-0.461187597164459\\
0.808080808080808	-0.466438777097565\\
0.818181818181818	-0.471331160186245\\
0.828282828282828	-0.475877424079645\\
0.838383838383838	-0.480090118256597\\
0.848484848484849	-0.483981584407127\\
0.858585858585859	-0.48756387913824\\
0.868686868686869	-0.490848699337259\\
0.878787878787879	-0.493847310513951\\
0.888888888888889	-0.496570478429415\\
0.898989898989899	-0.499028404305165\\
0.909090909090909	-0.501230663890099\\
0.919191919191919	-0.503186150646254\\
0.929292929292929	-0.504903023296323\\
0.939393939393939	-0.506388657957076\\
0.94949494949495	-0.507649605063049\\
0.95959595959596	-0.50869155126429\\
0.96969696969697	-0.509519286460627\\
0.97979797979798	-0.510136676112935\\
0.98989898989899	-0.510546638949352\\
1	-0.510751130161379\\
};
\addplot [color=mycolor2, forget plot, thick]
  table[row sep=crcr]{%
0	0.731021330205663\\
0.0101010101010101	0.729992847344008\\
0.0202020202020202	0.727938108407288\\
0.0303030303030303	0.724861559988691\\
0.0404040404040404	0.720769854553806\\
0.0505050505050505	0.715671829603622\\
0.0606060606060606	0.709578480011616\\
0.0707070707070707	0.702502923636994\\
0.0808080808080808	0.694460360340895\\
0.0909090909090909	0.685468024556655\\
0.101010101010101	0.675545131588939\\
0.111111111111111	0.664712817839529\\
0.121212121212121	0.652994075179831\\
0.131313131313131	0.640413679711535\\
0.141414141414141	0.626998115177355\\
0.151515151515152	0.612775491303176\\
0.161616161616162	0.597775457371331\\
0.171717171717172	0.582029111341895\\
0.181818181818182	0.565568904854867\\
0.191919191919192	0.54842854446079\\
0.202020202020202	0.530642889440675\\
0.212121212121212	0.512247846588063\\
0.222222222222222	0.49328026233653\\
0.232323232323232	0.473777812624974\\
0.242424242424242	0.453778890900511\\
0.252525252525253	0.433322494664776\\
0.262626262626263	0.412448110973809\\
0.272727272727273	0.391195601304508\\
0.282828282828283	0.369605086201881\\
0.292929292929293	0.347716830120907\\
0.303030303030303	0.325571126874876\\
0.313131313131313	0.303208186098533\\
0.323232323232323	0.280668021129199\\
0.333333333333333	0.257990338702393\\
0.343434343434343	0.23521443085031\\
0.353535353535354	0.212379069381792\\
0.363636363636364	0.189522403311348\\
0.373737373737374	0.166681859592243\\
0.383838383838384	0.143894047494789\\
0.393939393939394	0.12119466695583\\
0.404040404040404	0.0986184212089615\\
0.414141414141414	0.0761989339875016\\
0.424242424242424	0.0539686715735044\\
0.434343434343434	0.0319588699464354\\
0.444444444444444	0.0101994672644622\\
0.454545454545455	-0.0112809581101946\\
0.464646464646465	-0.0324552438527086\\
0.474747474747475	-0.0532976940177331\\
0.484848484848485	-0.0737841233395589\\
0.494949494949495	-0.0938918953718708\\
0.505050505050505	-0.113599954373141\\
0.515151515151515	-0.132888850868519\\
0.525252525252525	-0.151740760844112\\
0.535353535353535	-0.170139498554724\\
0.545454545454545	-0.188070522951277\\
0.555555555555556	-0.205520937759253\\
0.565656565656566	-0.222479485264393\\
0.575757575757576	-0.238936533886559\\
0.585858585858586	-0.254884059646883\\
0.595959595959596	-0.27031562165716\\
0.606060606060606	-0.285226331783667\\
0.616161616161616	-0.299612818660128\\
0.626262626262626	-0.313473186246434\\
0.636363636363636	-0.326806967150666\\
0.646464646464646	-0.339615070952115\\
0.656565656565657	-0.351899727782039\\
0.666666666666667	-0.36366442743698\\
0.676767676767677	-0.374913854316308\\
0.686868686868687	-0.385653818491401\\
0.696969696969697	-0.395891183228273\\
0.707070707070707	-0.405633789298586\\
0.717171717171717	-0.41489037642571\\
0.727272727272727	-0.423670502222854\\
0.737373737373737	-0.431984458989119\\
0.747474747474747	-0.439843188736743\\
0.757575757575758	-0.447258196828654\\
0.767676767676768	-0.454241464609774\\
0.777777777777778	-0.460805361418294\\
0.787878787878788	-0.466962556364345\\
0.797979797979798	-0.472725930263108\\
0.808080808080808	-0.478108488107506\\
0.818181818181818	-0.483123272462114\\
0.828282828282828	-0.487783278154901\\
0.838383838383838	-0.492101368636861\\
0.848484848484849	-0.496090194371565\\
0.858585858585859	-0.499762113607152\\
0.868686868686869	-0.503129115872338\\
0.878787878787879	-0.506202748525762\\
0.888888888888889	-0.508994046674314\\
0.898989898989899	-0.511513466761228\\
0.909090909090909	-0.513770824108585\\
0.919191919191919	-0.515775234681645\\
0.929292929292929	-0.517535061324073\\
0.939393939393939	-0.519057864693794\\
0.94949494949495	-0.520350359108969\\
0.95959595959596	-0.521418373492474\\
0.96969696969697	-0.5222668175814\\
0.97979797979798	-0.522899653545586\\
0.98989898989899	-0.523319873136074\\
1	-0.523529480460795\\
};
\end{axis}
\end{tikzpicture}%
	\end{subfigure}
	
	\begin{subfigure}[b]{0.43\textwidth}
		% This file was created by matlab2tikz.
%
%The latest updates can be retrieved from
%  http://www.mathworks.com/matlabcentral/fileexchange/22022-matlab2tikz-matlab2tikz
%where you can also make suggestions and rate matlab2tikz.
%
\definecolor{mycolor1}{rgb}{0.00000,0.44700,0.74100}%
\definecolor{mycolor2}{rgb}{0.85000,0.32500,0.09800}%
\begin{tikzpicture}[scale=0.50]

\begin{axis}[%
width=4.521in,
height=3.566in,
at={(0.758in,0.481in)},
scale only axis,
xmin=0,
xmax=1,
ymin=-2,
ymax=2,
axis background/.style={fill=white},
title style={font=\bfseries},
title={\Large$\text{t = t}_\text{0}\text{ = 1.34}$}
]
\addplot [color=mycolor1, forget plot, thick]
  table[row sep=crcr]{%
0	0.618858474410166\\
0.0101010101010101	0.618214898698561\\
0.0202020202020202	0.616928511999963\\
0.0303030303030303	0.61500084249746\\
0.0404040404040404	0.612434179303607\\
0.0505050505050505	0.609231568675091\\
0.0606060606060606	0.605396808980792\\
0.0707070707070707	0.600934444436234\\
0.0808080808080808	0.595849757620524\\
0.0909090909090909	0.590148760795004\\
0.101010101010101	0.583838186045872\\
0.111111111111111	0.576925474275974\\
0.121212121212121	0.569418763073879\\
0.131313131313131	0.561326873491076\\
0.141414141414141	0.552659295760855\\
0.151515151515152	0.543426173994956\\
0.161616161616162	0.533638289896531\\
0.171717171717172	0.523307045530255\\
0.181818181818182	0.512444445192602\\
0.191919191919192	0.501063076427312\\
0.202020202020202	0.48917609023296\\
0.212121212121212	0.476797180511242\\
0.222222222222222	0.463940562806153\\
0.232323232323232	0.450620952385603\\
0.242424242424242	0.43685354171825\\
0.252525252525253	0.422653977399337\\
0.262626262626263	0.408038336580215\\
0.272727272727273	0.39302310295688\\
0.282828282828283	0.377625142373373\\
0.292929292929293	0.361861678096208\\
0.303030303030303	0.345750265816101\\
0.313131313131313	0.329308768433277\\
0.323232323232323	0.312555330682337\\
0.333333333333333	0.29550835365234\\
0.343434343434343	0.278186469257098\\
0.353535353535354	0.260608514709989\\
0.363636363636364	0.242793507056635\\
0.373737373737374	0.224760617817729\\
0.383838383838384	0.206529147793028\\
0.393939393939394	0.188118502076166\\
0.404040404040404	0.169548165328353\\
0.414141414141414	0.15083767735739\\
0.424242424242424	0.132006609046579\\
0.434343434343434	0.113074538676178\\
0.444444444444444	0.0940610286779955\\
0.454545454545455	0.074985602861553\\
0.464646464646465	0.0558677241479736\\
0.474747474747475	0.0367267728454072\\
0.484848484848485	0.0175820254973642\\
0.494949494949495	-0.00154736566717423\\
0.505050505050505	-0.0206423926555648\\
0.515151515151515	-0.0396842109815404\\
0.525252525252525	-0.0586541582624434\\
0.535353535353535	-0.0775337721252989\\
0.545454545454545	-0.0963048074060774\\
0.555555555555556	-0.114949252629201\\
0.565656565656566	-0.133449345757051\\
0.575757575757576	-0.151787589201936\\
0.585858585858586	-0.169946764095629\\
0.595959595959596	-0.18790994381423\\
0.606060606060606	-0.205660506758698\\
0.616161616161616	-0.223182148393913\\
0.626262626262626	-0.240458892551627\\
0.636363636363636	-0.257475102005032\\
0.646464646464646	-0.274215488325007\\
0.656565656565657	-0.290665121030316\\
0.666666666666667	-0.306809436046162\\
0.676767676767677	-0.32263424348753\\
0.686868686868687	-0.338125734785615\\
0.696969696969697	-0.353270489177483\\
0.707070707070707	-0.368055479580689\\
0.717171717171717	-0.382468077876175\\
0.727272727272727	-0.3964960596241\\
0.737373737373737	-0.410127608238532\\
0.747474747474747	-0.423351318648013\\
0.757575757575758	-0.436156200469962\\
0.767676767676768	-0.448531680727675\\
0.777777777777778	-0.460467606139321\\
0.787878787878788	-0.471954245008829\\
0.797979797979798	-0.482982288748867\\
0.808080808080808	-0.49354285306632\\
0.818181818181818	-0.503627478840654\\
0.828282828282828	-0.513228132725454\\
0.838383838383838	-0.522337207503101\\
0.848484848484849	-0.530947522222132\\
0.858585858585859	-0.539052322146232\\
0.868686868686869	-0.546645278543076\\
0.878787878787879	-0.55372048834038\\
0.888888888888889	-0.56027247367549\\
0.898989898989899	-0.566296181363742\\
0.909090909090909	-0.571786982309531\\
0.919191919191919	-0.576740670882691\\
0.929292929292929	-0.581153464281285\\
0.939393939393939	-0.585022001900331\\
0.94949494949495	-0.5883433447243\\
0.95959595959596	-0.591114974759499\\
0.96969696969697	-0.593334794520552\\
0.97979797979798	-0.595001126583347\\
0.98989898989899	-0.596112713214805\\
1	-0.596668716087848\\
};
\addplot [color=mycolor2, forget plot, thick]
  table[row sep=crcr]{%
0	0.749522682931197\\
0.0101010101010101	0.748743224276306\\
0.0202020202020202	0.747185233153151\\
0.0303030303030303	0.744850560401521\\
0.0404040404040404	0.74174197845139\\
0.0505050505050505	0.737863176738354\\
0.0606060606060606	0.733218755609262\\
0.0707070707070707	0.72781421873378\\
0.0808080808080808	0.721655964041367\\
0.0909090909090909	0.714751273206975\\
0.101010101010101	0.707108299712404\\
0.111111111111111	0.698736055513854\\
0.121212121212121	0.689644396349703\\
0.131313131313131	0.679844005725876\\
0.141414141414141	0.669346377619447\\
0.151515151515152	0.658163797944178\\
0.161616161616162	0.646309324824676\\
0.171717171717172	0.63379676772863\\
0.181818181818182	0.620640665509222\\
0.191919191919192	0.606856263412248\\
0.202020202020202	0.592459489104764\\
0.212121212121212	0.577466927784136\\
0.222222222222222	0.561895796428257\\
0.232323232323232	0.54576391724938\\
0.242424242424242	0.529089690415458\\
0.252525252525253	0.511892066104166\\
0.262626262626263	0.49419051595581\\
0.272727272727273	0.476005003992145\\
0.282828282828283	0.457355957068746\\
0.292929292929293	0.438264234928926\\
0.303030303030303	0.418751099927414\\
0.313131313131313	0.398838186491872\\
0.323232323232323	0.378547470390145\\
0.333333333333333	0.357901237870557\\
0.343434343434343	0.336922054741942\\
0.353535353535354	0.31563273545913\\
0.363636363636364	0.294056312278518\\
0.373737373737374	0.272216004547049\\
0.383838383838384	0.250135188186382\\
0.393939393939394	0.227837365432399\\
0.404040404040404	0.205346134888254\\
0.414141414141414	0.182685161947205\\
0.424242424242424	0.15987814963921\\
0.434343434343434	0.13694880995296\\
0.444444444444444	0.113920835682495\\
0.454545454545455	0.0908178728449548\\
0.464646464646465	0.0676634937132602\\
0.474747474747475	0.0444811705046646\\
0.484848484848485	0.0212942497631794\\
0.494949494949495	-0.00187407252916792\\
0.505050505050505	-0.0250007750803819\\
0.515151515151515	-0.0480630346271652\\
0.525252525252525	-0.0710382484587237\\
0.535353535353535	-0.0939040561034835\\
0.545454545454545	-0.116638360159762\\
0.555555555555556	-0.139219346254713\\
0.565656565656566	-0.161625502119153\\
0.575757575757576	-0.183835635769105\\
0.585858585858586	-0.205828892788184\\
0.595959595959596	-0.227584772708053\\
0.606060606060606	-0.249083144487412\\
0.616161616161616	-0.270304261092952\\
0.626262626262626	-0.291228773188787\\
0.636363636363636	-0.311837741943706\\
0.646464646464646	-0.332112650968441\\
0.656565656565657	-0.352035417397819\\
0.666666666666667	-0.371588402135237\\
0.676767676767677	-0.390754419279347\\
0.686868686868687	-0.409516744755157\\
0.696969696969697	-0.427859124173885\\
0.707070707070707	-0.445765779947945\\
0.717171717171717	-0.463221417689252\\
0.727272727272727	-0.480211231920746\\
0.737373737373737	-0.496720911132514\\
0.747474747474747	-0.512736642215241\\
0.757575757575758	-0.528245114304844\\
0.767676767676768	-0.543233522073136\\
0.777777777777778	-0.557689568500116\\
0.787878787878788	-0.571601467164089\\
0.797979797979798	-0.584957944086207\\
0.808080808080808	-0.597748239166239\\
0.818181818181818	-0.6099621072464\\
0.828282828282828	-0.621589818839891\\
0.838383838383838	-0.632622160560461\\
0.848484848484849	-0.643050435288768\\
0.858585858585859	-0.652866462110605\\
0.868686868686869	-0.662062576061159\\
0.878787878787879	-0.670631627708437\\
0.888888888888889	-0.678566982607759\\
0.898989898989899	-0.685862520657869\\
0.909090909090909	-0.692512635387656\\
0.919191919191919	-0.69851223320087\\
0.929292929292929	-0.70385673260437\\
0.939393939393939	-0.708542063443557\\
0.94949494949495	-0.712564666166625\\
0.95959595959596	-0.715921491137075\\
0.96969696969697	-0.718609998011792\\
0.97979797979798	-0.720628155199589\\
0.98989898989899	-0.721974439412818\\
1	-0.722647835322145\\
};
\end{axis}
\end{tikzpicture}%
	\end{subfigure}
	\qquad \qquad
	\begin{subfigure}[b]{0.43\textwidth}
		% This file was created by matlab2tikz.
%
%The latest updates can be retrieved from
%  http://www.mathworks.com/matlabcentral/fileexchange/22022-matlab2tikz-matlab2tikz
%where you can also make suggestions and rate matlab2tikz.
%
\definecolor{mycolor1}{rgb}{0.00000,0.44700,0.74100}%
\definecolor{mycolor2}{rgb}{0.85000,0.32500,0.09800}%
\begin{tikzpicture}[scale=0.50]

\begin{axis}[%
width=4.521in,
height=3.566in,
at={(0.758in,0.481in)},
scale only axis,
xmin=0,
xmax=1,
ymin=-2,
ymax=2,
axis background/.style={fill=white},
title style={font=\bfseries},
title={\Large t = 3.00}
]
\addplot [color=mycolor1, forget plot, thick]
  table[row sep=crcr]{%
0	0.474599586407972\\
0.0101010101010101	0.474131039845685\\
0.0202020202020202	0.473194409806976\\
0.0303030303030303	0.471790622003846\\
0.0404040404040404	0.469921063855461\\
0.0505050505050505	0.467587583110844\\
0.0606060606060606	0.464792486014633\\
0.0707070707070707	0.461538535017751\\
0.0808080808080808	0.457828946035309\\
0.0909090909090909	0.453667385254496\\
0.101010101010101	0.449057965495702\\
0.111111111111111	0.444005242130516\\
0.121212121212121	0.438514208560752\\
0.131313131313131	0.432590291263039\\
0.141414141414141	0.426239344403983\\
0.151515151515152	0.419467644031342\\
0.161616161616162	0.41228188184705\\
0.171717171717172	0.404689158568389\\
0.181818181818182	0.396696976883974\\
0.191919191919192	0.388313234011675\\
0.202020202020202	0.379546213865947\\
0.212121212121212	0.370404578842456\\
0.222222222222222	0.36089736122828\\
0.232323232323232	0.351033954246309\\
0.242424242424242	0.340824102742856\\
0.252525252525253	0.33027789352783\\
0.262626262626263	0.319405745377182\\
0.272727272727273	0.308218398707653\\
0.282828282828283	0.296726904934186\\
0.292929292929293	0.284942615520675\\
0.303030303030303	0.27287717073503\\
0.313131313131313	0.260542488119814\\
0.323232323232323	0.247950750690003\\
0.333333333333333	0.235114394869678\\
0.343434343434343	0.222046098179702\\
0.353535353535354	0.208758766688706\\
0.363636363636364	0.195265522239896\\
0.373737373737374	0.181579689466452\\
0.383838383838384	0.167714782608446\\
0.393939393939394	0.153684492144454\\
0.404040404040404	0.13950267125114\\
0.414141414141414	0.125183322104325\\
0.424242424242424	0.110740582035148\\
0.434343434343434	0.0961887095550958\\
0.444444444444444	0.0815420702637663\\
0.454545454545455	0.0668151226533761\\
0.464646464646465	0.0520224038240776\\
0.474747474747475	0.0371785151242539\\
0.484848484848485	0.0222981077300132\\
0.494949494949495	0.00739586817815519\\
0.505050505050505	-0.00751349613307825\\
0.515151515151515	-0.0224152714611414\\
0.525252525252525	-0.0372947522398717\\
0.535353535353535	-0.0521372555893973\\
0.545454545454545	-0.0669281358009549\\
0.555555555555556	-0.0816527987823457\\
0.565656565656566	-0.0962967164498041\\
0.575757575757576	-0.11084544105212\\
0.585858585858586	-0.125284619412929\\
0.595959595959596	-0.139600007077183\\
0.606060606060606	-0.153777482347903\\
0.616161616161616	-0.167803060199455\\
0.626262626262626	-0.181662906053702\\
0.636363636363636	-0.195343349405542\\
0.646464646464646	-0.208830897284483\\
0.656565656565657	-0.222112247539107\\
0.666666666666667	-0.235174301931414\\
0.676767676767677	-0.248004179028271\\
0.686868686868687	-0.260589226877375\\
0.696969696969697	-0.272917035455366\\
0.707070707070707	-0.284975448875952\\
0.717171717171717	-0.296752577346153\\
0.727272727272727	-0.308236808859034\\
0.737373737373737	-0.319416820611524\\
0.747474747474747	-0.330281590136248\\
0.757575757575758	-0.340820406136529\\
0.767676767676768	-0.351022879014058\\
0.777777777777778	-0.36087895107899\\
0.787878787878788	-0.37037890643258\\
0.797979797979798	-0.379513380512763\\
0.808080808080808	-0.388273369293432\\
0.818181818181818	-0.396650238128505\\
0.828282828282828	-0.404635730232215\\
0.838383838383838	-0.41222197478741\\
0.848484848484849	-0.419401494674033\\
0.858585858585859	-0.426167213810303\\
0.868686868686869	-0.432512464099492\\
0.878787878787879	-0.438430991975602\\
0.888888888888889	-0.443916964541609\\
0.898989898989899	-0.448964975294356\\
0.909090909090909	-0.453570049430558\\
0.919191919191919	-0.457727648728811\\
0.929292929292929	-0.461433676002888\\
0.939393939393939	-0.464684479122035\\
0.94949494949495	-0.467476854594377\\
0.95959595959596	-0.469808050709995\\
0.96969696969697	-0.471675770240641\\
0.97979797979798	-0.473078172693474\\
0.98989898989899	-0.474013876116674\\
1	-0.474481958455166\\
};
\addplot [color=mycolor2, forget plot, thick]
  table[row sep=crcr]{%
0	1.03785605891633\\
0.0101010101010101	1.03683143963203\\
0.0202020202020202	1.03478321374124\\
0.0303030303030303	1.03171340559426\\
0.0404040404040404	1.02762504920422\\
0.0505050505050505	1.02252218523526\\
0.0606060606060606	1.01640985699133\\
0.0707070707070707	1.00929410540989\\
0.0808080808080808	1.00118196306553\\
0.0909090909090909	0.992081447189391\\
0.101010101010101	0.982001551711687\\
0.111111111111111	0.970952238335172\\
0.121212121212121	0.958944426648642\\
0.131313131313131	0.945989983290434\\
0.141414141414141	0.93210171017285\\
0.151515151515152	0.917293331779384\\
0.161616161616162	0.901579481547565\\
0.171717171717172	0.884975687351124\\
0.181818181818182	0.867498356096141\\
0.191919191919192	0.849164757446665\\
0.202020202020202	0.829993006696204\\
0.212121212121212	0.810002046802328\\
0.222222222222222	0.789211629602457\\
0.232323232323232	0.767642296229721\\
0.242424242424242	0.745315356748602\\
0.252525252525253	0.722252869030787\\
0.262626262626263	0.69847761689248\\
0.272727272727273	0.674013087515106\\
0.282828282828283	0.648883448172059\\
0.292929292929293	0.623113522284846\\
0.303030303030303	0.596728764832618\\
0.313131313131313	0.569755237139723\\
0.323232323232323	0.542219581066524\\
0.333333333333333	0.51414899262931\\
0.343434343434343	0.485571195075672\\
0.353535353535354	0.456514411442267\\
0.363636363636364	0.427007336622362\\
0.373737373737374	0.397079108971053\\
0.383838383838384	0.366759281476468\\
0.393939393939394	0.336077792525712\\
0.404040404040404	0.305064936294649\\
0.414141414141414	0.273751332791031\\
0.424242424242424	0.242167897580738\\
0.434343434343434	0.210345811227259\\
0.444444444444444	0.17831648847475\\
0.454545454545455	0.146111547205273\\
0.464646464646465	0.113762777201004\\
0.474747474747475	0.0813021087423757\\
0.484848484848485	0.0487615810732676\\
0.494949494949495	0.0161733107644457\\
0.505050505050505	-0.0164305399934494\\
0.515151515151515	-0.0490177951360301\\
0.525252525252525	-0.0815562964790533\\
0.535353535353535	-0.114013935448736\\
0.545454545454545	-0.146358684757209\\
0.555555555555556	-0.178558629991886\\
0.565656565656566	-0.21058200108766\\
0.575757575757576	-0.242397203650939\\
0.585858585858586	-0.273972850104742\\
0.595959595959596	-0.305277790624243\\
0.606060606060606	-0.336281143832403\\
0.616161616161616	-0.36695232722556\\
0.626262626262626	-0.397261087299177\\
0.636363636363636	-0.427177529344204\\
0.646464646464646	-0.456672146884916\\
0.656565656565657	-0.485715850729407\\
0.666666666666667	-0.514279997604362\\
0.676767676767677	-0.542336418346117\\
0.686868686868687	-0.5698574456205\\
0.696969696969697	-0.596815941144408\\
0.707070707070707	-0.623185322382584\\
0.717171717171717	-0.64893958869357\\
0.727272727272727	-0.674053346899409\\
0.737373737373737	-0.698501836254187\\
0.747474747474747	-0.722260952787151\\
0.757575757575758	-0.745307272996739\\
0.767676767676768	-0.767618076872515\\
0.777777777777778	-0.789171370222655\\
0.787878787878788	-0.809945906285319\\
0.797979797979798	-0.829921206602968\\
0.808080808080808	-0.849077581139378\\
0.818181818181818	-0.86739614761987\\
0.828282828282828	-0.884858850076039\\
0.838383838383838	-0.901448476577023\\
0.848484848484849	-0.917148676130162\\
0.858585858585859	-0.931943974734716\\
0.868686868686869	-0.94581979057311\\
0.878787878787879	-0.958762448325039\\
0.888888888888889	-0.970759192590605\\
0.898989898989899	-0.981798200409524\\
0.909090909090909	-0.991868592864328\\
0.919191919191919	-1.00096044575635\\
0.929292929292929	-1.00906479934423\\
0.939393939393939	-1.01617366713547\\
0.94949494949495	-1.02228004372267\\
0.95959595959596	-1.02737791165684\\
0.96969696969697	-1.03146224735108\\
0.97979797979798	-1.03452902600912\\
0.98989898989899	-1.03657522557382\\
1	-1.03759882969188\\
};
\end{axis}
\end{tikzpicture}%
	\end{subfigure}
	
	\begin{subfigure}[b]{0.43\textwidth}
		% This file was created by matlab2tikz.
%
%The latest updates can be retrieved from
%  http://www.mathworks.com/matlabcentral/fileexchange/22022-matlab2tikz-matlab2tikz
%where you can also make suggestions and rate matlab2tikz.
%
\definecolor{mycolor1}{rgb}{0.00000,0.44700,0.74100}%
\definecolor{mycolor2}{rgb}{0.85000,0.32500,0.09800}%
\begin{tikzpicture}[scale=0.50]

\begin{axis}[%
width=4.521in,
height=3.566in,
at={(0.758in,0.481in)},
scale only axis,
xmin=0,
xmax=1,
ymin=-2,
ymax=2,
axis background/.style={fill=white},
title style={font=\bfseries},
title={\Large$\text{t = t}_\text{-}\text{ = 5.31}$}
]
\addplot [color=mycolor1, forget plot, thick]
  table[row sep=crcr]{%
0	0.236700664858286\\
0.0101010101010101	0.236467069794065\\
0.0202020202020202	0.236000110196086\\
0.0303030303030303	0.235300246897768\\
0.0404040404040404	0.234368170580694\\
0.0505050505050505	0.233204801092989\\
0.0606060606060606	0.231811286541538\\
0.0707070707070707	0.230189002158934\\
0.0808080808080808	0.228339548946282\\
0.0909090909090909	0.226264752093196\\
0.101010101010101	0.223966659176548\\
0.111111111111111	0.221447538139742\\
0.121212121212121	0.218709875054519\\
0.131313131313131	0.215756371667487\\
0.141414141414141	0.212589942733815\\
0.151515151515152	0.209213713140698\\
0.161616161616162	0.205631014823463\\
0.171717171717172	0.201845383477325\\
0.181818181818182	0.197860555068072\\
0.191919191919192	0.193680462145098\\
0.202020202020202	0.189309229960427\\
0.212121212121212	0.184751172397575\\
0.222222222222222	0.180010787714245\\
0.232323232323232	0.175092754103065\\
0.242424242424242	0.170001925074764\\
0.252525252525253	0.164743324668314\\
0.262626262626263	0.159322142492794\\
0.272727272727273	0.153743728605851\\
0.282828282828283	0.148013588233814\\
0.292929292929293	0.142137376338684\\
0.303030303030303	0.136120892037346\\
0.313131313131313	0.12997007287852\\
0.323232323232323	0.123690988983099\\
0.333333333333333	0.117289837053651\\
0.343434343434343	0.110772934259004\\
0.353535353535354	0.104146711999944\\
0.363636363636364	0.0974177095621799\\
0.373737373737374	0.0905925676628425\\
0.383838383838384	0.0836780218968819\\
0.393939393939394	0.0766808960898325\\
0.404040404040404	0.0696080955635068\\
0.414141414141414	0.062466600321263\\
0.424242424242424	0.0552634581595723\\
0.434343434343434	0.0480057777126825\\
0.444444444444444	0.0407007214372452\\
0.454545454545455	0.0333554985438258\\
0.464646464646465	0.0259773578822757\\
0.474747474747475	0.0185735807879863\\
0.484848484848485	0.0111514738960845\\
0.494949494949495	0.00371836193066197\\
0.505050505050505	-0.00371841952384543\\
0.515151515151515	-0.0111515312619957\\
0.525252525252525	-0.01857363770025\\
0.535353535353535	-0.0259774141163068\\
0.545454545454545	-0.0333555538777161\\
0.555555555555556	-0.0407007756526388\\
0.565656565656566	-0.0480058305956379\\
0.575757575757576	-0.0552635095014062\\
0.585858585858586	-0.0624666499193745\\
0.595959595959596	-0.0696081432221764\\
0.606060606060606	-0.0766809416209951\\
0.616161616161616	-0.0836780651208685\\
0.626262626262626	-0.0905926084090895\\
0.636363636363636	-0.0974177476699022\\
0.646464646464646	-0.10414674731877\\
0.656565656565657	-0.110772966649568\\
0.666666666666667	-0.117289866388143\\
0.676767676767677	-0.123691015145772\\
0.686868686868687	-0.129970095766143\\
0.696969696969697	-0.136120911559614\\
0.707070707070707	-0.142137392418573\\
0.717171717171717	-0.148013600807886\\
0.727272727272727	-0.153743737624504\\
0.737373737373737	-0.159322147920458\\
0.747474747474747	-0.164743326483589\\
0.757575757575758	-0.170001923270508\\
0.767676767676768	-0.17509274868642\\
0.777777777777778	-0.18001077870661\\
0.787878787878788	-0.184751159834521\\
0.797979797979798	-0.189309213891556\\
0.808080808080808	-0.193680442633849\\
0.818181818181818	-0.197860532191469\\
0.828282828282828	-0.201845357325672\\
0.838383838383838	-0.205630985499991\\
0.848484848484849	-0.209213680761156\\
0.858585858585859	-0.212589907426011\\
0.868686868686869	-0.215756333570787\\
0.878787878787879	-0.218709834319295\\
0.888888888888889	-0.221447494926779\\
0.898989898989899	-0.22396661365641\\
0.909090909090909	-0.226264704445552\\
0.919191919191919	-0.228339499359197\\
0.929292929292929	-0.230188950828127\\
0.939393939393939	-0.23181123366961\\
0.94949494949495	-0.233204746888624\\
0.95959595959596	-0.234368115257833\\
0.96969696969697	-0.235300190674767\\
0.97979797979798	-0.236000053294854\\
0.98989898989899	-0.236467012439185\\
1	-0.236700607276133\\
};
\addplot [color=mycolor2, forget plot, thick]
  table[row sep=crcr]{%
0	1.67116516136456\\
0.0101010101010101	1.66951592251081\\
0.0202020202020202	1.66621907240533\\
0.0303030303030303	1.66127786464587\\
0.0404040404040404	1.65469717561502\\
0.0505050505050505	1.64648349966776\\
0.0606060606060606	1.63664494272229\\
0.0707070707070707	1.62519121426041\\
0.0808080808080808	1.61213361774539\\
0.0909090909090909	1.59748503946674\\
0.101010101010101	1.58125993582295\\
0.111111111111111	1.56347431905465\\
0.121212121212121	1.54414574144242\\
0.131313131313131	1.52329327798469\\
0.141414141414141	1.50093750757291\\
0.151515151515152	1.47710049268255\\
0.161616161616162	1.45180575760005\\
0.171717171717172	1.42507826520698\\
0.181818181818182	1.39694439234463\\
0.191919191919192	1.36743190378308\\
0.202020202020202	1.33656992482062\\
0.212121212121212	1.30438891254045\\
0.222222222222222	1.27092062575304\\
0.232323232323232	1.23619809365393\\
0.242424242424242	1.20025558322773\\
0.252525252525253	1.16312856543056\\
0.262626262626263	1.12485368018441\\
0.272727272727273	1.08546870021778\\
0.282828282828283	1.04501249378842\\
0.292929292929293	1.00352498632487\\
0.303030303030303	0.96104712102478\\
0.313131313131313	0.917620818448757\\
0.323232323232323	0.873288935149682\\
0.333333333333333	0.828095221378336\\
0.343434343434343	0.782084277907046\\
0.353535353535354	0.735301512013981\\
0.363636363636364	0.68779309267152\\
0.373737373737374	0.63960590498293\\
0.383838383838384	0.590787503912318\\
0.393939393939394	0.541386067353511\\
0.404040404040404	0.491450348584191\\
0.414141414141414	0.441029628152197\\
0.424242424242424	0.390173665241494\\
0.434343434343434	0.338932648565776\\
0.444444444444444	0.287357146838203\\
0.454545454545455	0.235498058866113\\
0.464646464646465	0.183406563319999\\
0.474747474747475	0.131134068226286\\
0.484848484848485	0.0787321602337828\\
0.494949494949495	0.0262525537038597\\
0.505050505050505	-0.0262529603254041\\
0.515151515151515	-0.0787325652507291\\
0.525252525252525	-0.131134470040368\\
0.535353535353535	-0.183406960345592\\
0.545454545454545	-0.23549844953649\\
0.555555555555556	-0.287357529611716\\
0.565656565656566	-0.338933021931947\\
0.575757575757576	-0.390174027726966\\
0.585858585858586	-0.441029978326559\\
0.595959595959596	-0.491450685065614\\
0.606060606060606	-0.541386388814209\\
0.616161616161616	-0.590787809083783\\
0.626262626262626	-0.639606192660941\\
0.636363636363636	-0.687793361720895\\
0.646464646464646	-0.735301761373057\\
0.656565656565657	-0.782084506591868\\
0.666666666666667	-0.828095428486539\\
0.676767676767677	-0.873289119864057\\
0.686868686868687	-0.917620980040473\\
0.696969696969697	-0.961047258856259\\
0.707070707070707	-1.0035250998523\\
0.717171717171717	-1.04501258256392\\
0.727272727272727	-1.08546876389115\\
0.737373737373737	-1.1248537185045\\
0.747474747474747	-1.16312857824629\\
0.757575757575758	-1.20025557048867\\
0.767676767676768	-1.23619805541053\\
0.777777777777778	-1.27092056215636\\
0.787878787878788	-1.30438882384163\\
0.797979797979798	-1.33656981136987\\
0.808080808080808	-1.36743176602829\\
0.818181818181818	-1.3969442308296\\
0.828282828282828	-1.4250780805693\\
0.838383838383838	-1.45180555056854\\
0.848484848484849	-1.47710026407443\\
0.858585858585859	-1.50093725829053\\
0.868686868686869	-1.52329300901202\\
0.878787878787879	-1.54414545384113\\
0.888888888888889	-1.5634740139599\\
0.898989898989899	-1.58125961443897\\
0.909090909090909	-1.59748470306204\\
0.919191919191919	-1.61213326764776\\
0.929292929292929	-1.62519085185168\\
0.939393939393939	-1.63664456943286\\
0.94949494949495	-1.646483116971\\
0.95959595959596	-1.6546967850214\\
0.96969696969697	-1.66127746769704\\
0.97979797979798	-1.66621867066801\\
0.98989898989899	-1.66951551757063\\
1	-1.67116475481978\\
};
\end{axis}
\end{tikzpicture}%
	\end{subfigure}
	\qquad \qquad
	\begin{subfigure}[b]{0.43 \textwidth}
		% This file was created by matlab2tikz.
%
%The latest updates can be retrieved from
%  http://www.mathworks.com/matlabcentral/fileexchange/22022-matlab2tikz-matlab2tikz
%where you can also make suggestions and rate matlab2tikz.
%
\definecolor{mycolor1}{rgb}{0.00000,0.44700,0.74100}%
\definecolor{mycolor2}{rgb}{0.85000,0.32500,0.09800}%
\begin{tikzpicture}[scale=0.50]

\begin{axis}[%
width=4.521in,
height=3.566in,
at={(0.758in,0.481in)},
scale only axis,
xmin=0,
xmax=1,
ymin=-2,
ymax=2,
axis background/.style={fill=white},
title style={font=\bfseries},
title={\Large$\text{t = 6.00}$}
]
\addplot [color=mycolor1, forget plot, thick]
  table[row sep=crcr]{%
0	0.174258155861767\\
0.0101010101010101	0.174086184092444\\
0.0202020202020202	0.173742410269204\\
0.0303030303030303	0.173227173655368\\
0.0404040404040404	0.172540982727362\\
0.0505050505050505	0.171684514672912\\
0.0606060606060606	0.170658614722738\\
0.0707070707070707	0.169464295316416\\
0.0808080808080808	0.16810273510322\\
0.0909090909090909	0.166575277778937\\
0.101010101010101	0.164883430759801\\
0.111111111111111	0.163028863694855\\
0.121212121212121	0.161013406818203\\
0.131313131313131	0.158839049142794\\
0.141414141414141	0.156507936497505\\
0.151515151515152	0.154022369409461\\
0.161616161616162	0.151384800833696\\
0.171717171717172	0.148597833732375\\
0.181818181818182	0.145664218505991\\
0.191919191919192	0.142586850279043\\
0.202020202020202	0.139368766042899\\
0.212121212121212	0.136013141658649\\
0.222222222222222	0.132523288722913\\
0.232323232323232	0.128902651299695\\
0.242424242424242	0.125154802521499\\
0.252525252525253	0.121283441063084\\
0.262626262626263	0.117292387491311\\
0.272727272727273	0.113185580494709\\
0.282828282828283	0.108967072996455\\
0.292929292929293	0.104641028154634\\
0.303030303030303	0.100211715253697\\
0.313131313131313	0.095683505491193\\
0.323232323232323	0.0910608676639241\\
0.333333333333333	0.0863483637577783\\
0.343434343434343	0.0815506444456002\\
0.353535353535354	0.0766724444975356\\
0.363636363636364	0.0717185781083842\\
0.373737373737374	0.0666939341465683\\
0.383838383838384	0.0616034713294087\\
0.393939393939394	0.0564522133294679\\
0.404040404040404	0.0512452438167905\\
0.414141414141414	0.0459877014419331\\
0.424242424242424	0.0406847747647359\\
0.434343434343434	0.0353416971338393\\
0.444444444444444	0.0299637415219993\\
0.454545454545455	0.0245562153222997\\
0.464646464646465	0.0191244551103951\\
0.474747474747475	0.0136738213779542\\
0.484848484848485	0.00820969324250114\\
0.494949494949495	0.00273746313887594\\
0.505050505050505	-0.00273746850244822\\
0.515151515151515	-0.00820969858493784\\
0.525252525252525	-0.0136738266782031\\
0.535353535353535	-0.0191244603475707\\
0.545454545454545	-0.0245562204757651\\
0.555555555555556	-0.0299637465714481\\
0.565656565656566	-0.0353417020593757\\
0.575757575757576	-0.0406847795469531\\
0.585858585858586	-0.0459877060619898\\
0.595959595959596	-0.0512452482564854\\
0.606060606060606	-0.0564522175713116\\
0.616161616161616	-0.0616034753566925\\
0.626262626262626	-0.0666939379434304\\
0.636363636363636	-0.0717185816598722\\
0.646464646464646	-0.0766724477896653\\
0.656565656565657	-0.081550647465411\\
0.666666666666667	-0.0863483664933845\\
0.676767676767677	-0.0910608701045615\\
0.686868686868687	-0.0956835076272616\\
0.696969696969697	-0.100211717076798\\
0.707070707070707	-0.104641029657606\\
0.717171717171717	-0.108967074173398\\
0.727272727272727	-0.113185581341009\\
0.737373737373737	-0.117292388003661\\
0.747474747474747	-0.121283441239493\\
0.757575757575758	-0.125154802361304\\
0.767676767676768	-0.12890265080356\\
0.777777777777778	-0.132523287892828\\
0.787878787878788	-0.136013140497921\\
0.797979797979798	-0.139368764556142\\
0.808080808080808	-0.142586848472157\\
0.818181818181818	-0.145664216386138\\
0.828282828282828	-0.148597831307954\\
0.838383838383838	-0.151384798114306\\
0.848484848484849	-0.154022366405867\\
0.858585858585859	-0.156507933221593\\
0.868686868686869	-0.158839045607524\\
0.878787878787879	-0.161013403037558\\
0.888888888888889	-0.163028859683789\\
0.898989898989899	-0.164883426534177\\
0.909090909090909	-0.166575273355462\\
0.919191919191919	-0.168102730499383\\
0.929292929292929	-0.16946429055042\\
0.939393939393939	-0.170658609813423\\
0.94949494949495	-0.171684509639685\\
0.95959595959596	-0.17254097759012\\
0.96969696969697	-0.173227168434416\\
0.97979797979798	-0.173742404985178\\
0.98989898989899	-0.174086178766231\\
1	-0.174258150514419\\
};
\addplot [color=mycolor2, forget plot, thick]
  table[row sep=crcr]{%
0	1.99973097444671\\
0.0101010101010101	1.99775748131438\\
0.0202020202020202	1.99381244264955\\
0.0303030303030303	1.9878997517298\\
0.0404040404040404	1.9800252436683\\
0.0505050505050505	1.97019668965521\\
0.0606060606060606	1.95842378928851\\
0.0707070707070707	1.9447181610016\\
0.0808080808080808	1.92909333059732\\
0.0909090909090909	1.91156471789964\\
0.101010101010101	1.8921496215361\\
0.111111111111111	1.87086720186615\\
0.121212121212121	1.84773846207219\\
0.131313131313131	1.82278622743195\\
0.141414141414141	1.79603512279267\\
0.151515151515152	1.76751154826932\\
0.161616161616162	1.73724365319089\\
0.171717171717172	1.70526130832036\\
0.181818181818182	1.67159607637587\\
0.191919191919192	1.63628118088204\\
0.202020202020202	1.59935147338244\\
0.212121212121212	1.56084339904523\\
0.222222222222222	1.52079496069617\\
0.232323232323232	1.47924568131442\\
0.242424242424242	1.4362365650281\\
0.252525252525253	1.39181005664814\\
0.262626262626263	1.34600999978034\\
0.272727272727273	1.29888159355703\\
0.282828282828283	1.25047134803095\\
0.292929292929293	1.2008270382754\\
0.303030303030303	1.14999765723602\\
0.313131313131313	1.09803336738062\\
0.323232323232323	1.04498545119489\\
0.333333333333333	0.990906260572709\\
0.343434343434343	0.935849165151182\\
0.353535353535354	0.87986849964119\\
0.363636363636364	0.823019510205586\\
0.373737373737374	0.765358299937889\\
0.383838383838384	0.706941773495276\\
0.393939393939394	0.647827580940546\\
0.404040404040404	0.588074060848453\\
0.414141414141414	0.527740182732555\\
0.424242424242424	0.466885488849421\\
0.434343434343434	0.405570035437589\\
0.444444444444444	0.343854333449307\\
0.454545454545455	0.281799288833515\\
0.464646464646465	0.219466142429018\\
0.474747474747475	0.156916409527167\\
0.484848484848485	0.0942118191636868\\
0.494949494949495	0.0314142531995658\\
0.505050505050505	-0.0314143147488727\\
0.515151515151515	-0.0942118804704485\\
0.525252525252525	-0.156916470349795\\
0.535353535353535	-0.219466202527836\\
0.545454545454545	-0.281799347971701\\
0.555555555555556	-0.343854391393833\\
0.565656565656566	-0.405570091960135\\
0.575757575757576	-0.466885543727281\\
0.585858585858586	-0.527740235749513\\
0.595959595959596	-0.588074111795636\\
0.606060606060606	-0.647827629617253\\
0.616161616161616	-0.706941819709762\\
0.626262626262626	-0.765358343508129\\
0.636363636363636	-0.823019550959991\\
0.646464646464646	-0.879868537419281\\
0.656565656565657	-0.93584919980423\\
0.666666666666667	-0.990906291964315\\
0.676767676767677	-1.04498547920153\\
0.686868686868687	-1.09803339189212\\
0.696969696969697	-1.14999767815601\\
0.707070707070707	-1.20082705552169\\
0.717171717171717	-1.25047136153583\\
0.727272727272727	-1.29888160326756\\
0.737373737373737	-1.34601000565856\\
0.747474747474747	-1.39181005867122\\
0.757575757575758	-1.43623656318841\\
0.767676767676768	-1.47924567561959\\
0.777777777777778	-1.52079495116903\\
0.787878787878788	-1.56084338572375\\
0.797979797979798	-1.59935145631956\\
0.808080808080808	-1.63628116014545\\
0.818181818181818	-1.67159605204777\\
0.828282828282828	-1.70526128049713\\
0.838383838383838	-1.73724362198269\\
0.848484848484849	-1.76751151379968\\
0.858585858585859	-1.79603508519799\\
0.868686868686869	-1.82278618686097\\
0.878787878787879	-1.84773841868538\\
0.888888888888889	-1.8708671558351\\
0.898989898989899	-1.89214957304284\\
0.909090909090909	-1.91156466713591\\
0.919191919191919	-1.92909327776382\\
0.929292929292929	-1.9447181063072\\
0.939393939393939	-1.95842373294943\\
0.94949494949495	-1.97019663189416\\
0.95959595959596	-1.9800251847136\\
0.96969696969697	-1.98789969181447\\
0.97979797979798	-1.99381238201042\\
0.98989898989899	-1.99775742019112\\
1	-1.9997309130809\\
};
\end{axis}
\end{tikzpicture}%
	\end{subfigure}
	
	\caption{The solutions $y_{cave}(\cdot,t)$ (blue line) and $y_{vex}(\cdot,t)$ (brown line) of \eqref{Eq:Reac_Diff_Syst} subject to $\mathcal{U}_{cave}$ resp. $\mathcal{U}_{vex}$ at times $t=0.15, 0.6, 1.34, 3, 5.41, 6$. Being identical until $t_+=0.15$, $y_{cave}$ converges to zero, while $y_{vex}$ starts to grow at $t_0= 1.34$ and remains growing afterwards.}\label{Fig:solution}
\end{figure}

Figure~\ref{Fig:solution} shows a plot of the corresponding solution $y_{cave}(\cdot,t)$ and $y_{vex}(\cdot,t)$ at selected times $t=0.15, 0.6, 1.34, 3, 5.41, 6$.
Until $t_+=0.15$, both solutions are equal. Afterwards, $y_{cave}(\cdot,t)$ converges to zero, while $y_{vex}(\cdot,t)$ starts to grow.

\subsubsection*{Discussion and Fourier Analysis}
In order to analyse how the geometry of the scalar generalised play operator \eqref{Eq:Hyst_op} governs the behaviour  
of the (nonlinear) reaction-diffusion equation 
\eqref{Eq:Reac_Diff_Syst}, we  
expand $y$ as Fourier series in terms of the orthonormal basis $\{\phi_k\}_{k\ge0}$ of $\mathrm{L}^2((0,1))$, i.e. 
\begin{equation*}%\label{Eq:expansion_y}
y(x,t) = \sum_{k=0}^{\infty} y^{(k)}(t) \phi_k(x), \qquad \text{with}
\quad y^{(k)}(t) = \langle y(\cdot,t) , \phi_k(\cdot) \rangle.
\end{equation*}
Inserting %\eqref{Eq:expansion_y} 
into \eqref{Eq:Reac_Diff_Syst} 
yields
\begin{equation}\label{Eq:Derivative_components_y}
\begin{aligned}
\dot{y}^{(k)}(t) &=  \mu_{k}(t)\,y^{(k)}(t), 
\qquad \text{with}\quad \mu_{k}(t):=R(t)-D\lambda_k
\qquad\text{for a.e. } t>0,\\
y^{(k)}(0)	  &= \langle y_0 , \phi_k \rangle.
\end{aligned}
\end{equation}

In the following we demonstrate that the difference between
$y_{cave}$ and $y_{vex}$ stems from 
a change of monotonicity of Fourier coefficients
$y^{(k)}$, which is a consequence of the different decay of $R(t)$ and hence of a different sign of some $\mu_{k}(t)$. This difference appears as the solutions slide along $\mathcal{U}_{cave}$ resp. 
$\mathcal{U}_{vex}$.

The initial data set the  
monotonicity of the Fourier coefficients
$y^{(k)}$ according to \eqref{Eq:Derivative_components_y}: With 
\begin{equation*}
\mu_k(0):=\min \{\max \{\mathcal{L}(\mathrm{T}y_0) , R_0\} , \mathcal{U}(\mathrm{T}y_0) \} - D\lambda_k,
\end{equation*}
we assume initial data $R_0$, $y_0$ and weights $k_i$, $i=m,\ldots, M$  with $m$ and $M$ from the definition of the functional $\mathrm{T}$ in \eqref{Eq:W} such that  (as in the example in Figures~\ref{Fig:phase_diag} and \ref{Fig:solution}) 
\begin{equation}\label{Tyin}
\frac{d}{dt}(\mathrm{T}y)(0) = \sum_{i=m}^{M} k_i \mu_i(0) y^{(i)}_0 < 0
\qquad 
\text{and}
\qquad 
m< I_0\le M, 
\end{equation}
where
\begin{align*}
\mu_k(0) &>0, \qquad \text{ for }\quad 0\leq k < I_0,    \\
\mu_k(0)&<0,\qquad \text{ for }\quad  I_0\leq k \le M. 
\end{align*}
Note that the $\mu_k$ are monotone decreasing in $k$ 
in the same way as the eigenvalues $\lambda_k$ are monotone increasing.  
Hence, we introduce the \emph{monotonicity index} 
\begin{equation}\label{MI}
\begin{split}
I(0) &:=I_0,\\
I(t) &:= 
\begin{cases}
\min\{k\in\mathbb{N} : \ 
\mu_k(t)\le 0\},&\qquad \text{if}\quad \mu_0(t)\ge 0,\\
-\infty,	&\qquad \text{if}\quad \mu_0(t)<0,\\
%\infty,		&& \text{if } \mu_M(t)>0,
\end{cases}
\end{split}
\end{equation}
which points to the lowest non-increasing Fourier mode $y^{(k)}$ for $k=0,\ldots, \infty$.

%In the following, we consider initial data $y_0$, $R_0$ and the weights $k_i$, $i=m,\ldots, M$ of the functional $T$, 
%which satisfy - such as the initial data of the example in Figures~\ref{Fig:phase_diag} and \ref{Fig:solution} - that
%the monotonicity index is larger than $m$ and less or equal than $M$ and that 
%$Ty$ is initially strictly decreasing, i.e. 
%\begin{equation}\label{Tyin}
%M\geq I_0>m, 
%\qquad 
%\text{and}
%\qquad \frac{d}{dt}(\mathrm{T}y)(0) = \sum_{i=m}^{M} k_i \mu_i(0) y^{(i)}_0 < 0.
%\end{equation}

\begin{remark}
Note that $m\ge 1$  in \eqref{Eq:W} (such as $m=1$ in Figures~\ref{Fig:phase_diag} and \ref{Fig:solution}) implies that $\mathrm{T}$ focuses on the Fourier modes 
orthogonal to the lowest Fourier mode $y^{(0)}(t) = \langle y(\cdot,t) , \phi_0 \rangle$, where $ \phi_0$ is a positive constant. Hence, while the zero order Fourier mode  $y^{(0)}(t)  \sim \int_{\Omega} y(x,t)dx$ represents the 
total population,
the higher order Fourier modes $y^{(k)}(t)$ for $k\ge m$
determine whether the solution $y$ converges to a space-homogenous large-time behaviour. 
The example of Figures~\ref{Fig:phase_diag} and \ref{Fig:solution} shows that despite being spatially homogeneous, the hysteresis operator \eqref{Eq:Hyst_op}
may not only prevent spatial homogenisation but yield 
grow-up of higher Fourier modes, i.e. Fourier modes  become unbounded in infinite time. 
We emphasise that \eqref{Eq:Reac_Diff_Syst} is a scalar PDE and that the observed mechanism of  
conditional spatial homogenisation versus inhomogeneous grow-up 
is quite different to e.g. Turing instability. 
\end{remark}

In Figures~\ref{Fig:phase_diag} and \ref{Fig:solution},
since $y_{0}^{(m)},y_{0}^{(M)}>0$ and \eqref{Eq:Derivative_components_y}, \eqref{Tyin} are satisfied, the Fourier coefficient $y^{(m)}(t)$ is strictly increasing while 
$y^{(M)}(t)$ strictly decreasing on some sufficiently 
small time interval. 
It also holds that 
\begin{equation}\label{Eq:turn_or_saddle_s}
\frac{d}{dt}(\mathrm{T}y)(t) = \sum_{i=m}^{M} k_i  \mu_k(t) y^{(i)}(t)<0, \qquad \text{for $t>0$ sufficiently small}.
\end{equation}
The evolution \eqref{Eq:turn_or_saddle_s} is 
determined by the values/signs of $\mu_k(t)=R(t)-D\lambda_k$ and, hence, by the monotonicity index $I(t)$ defined in \eqref{MI}.

In return, the evolution of the $\mu_k(t)$ is governed  
by the evolution of $R(t)$ given by \eqref{Eq:Hyst_op}, i.e. $R(t)$ is constant if $R \in (\mathcal{L}(\mathrm{T}y(t)), \mathcal{U}(\mathrm{T}y(t)))$, strictly decreasing if $R(t) = \mathcal{L}(\mathrm{T}y(t))$ and $\frac{d}{dt}(\mathrm{T}y)(t)<0$, and strictly increasing if $R(t) = \mathcal{U}(\mathrm{T}y(t))$ and $\frac{d}{dt}(\mathrm{T}y)(t)>0$.

%of $s(t)$ determines in return the 
%values of $\mu_k(t)=s(t)-D\lambda_k$ and in particular if the monotonicity index $I(t)$ \eqref{MI} changes. 
%
%
%
%The evolution \eqref{Eq:turn_or_saddle_s}, however, 
%determines 

%We make a couple of remarks concerning the regularity of $\mathrm{T}y$ and $s$ and derivatives of $\mathrm{T}y$.
\begin{remark}\label{Rem:regularity}
Before analysing the dichotomy of Figures~\ref{Fig:phase_diag} and \ref{Fig:solution}, we note first that $\mathrm{T}y\in \mathrm{C}([0,T])\cap\mathrm{C}^1(0,T)$ for any $T>0$. Moreover, $R\in \mathrm{W}^{1,\infty}(0,T)$ and $\mathrm{T}y\in \mathrm{W}^{2,\infty}(0,T)$. 
\end{remark}

The following proposition provides a largely 
explicit analysis of the nonlinear behaviour depicted in Figures~\ref{Fig:phase_diag} and \ref{Fig:solution}. 
%It uses that the regularity of $s(t)$ and $Ty(t)$ allows to 
%differentiate \eqref{Eq:turn_or_saddle_s} and insert \eqref{Eq:Derivative_components_y} to obtain
%\begin{equation}\label{Eq:second_deriv_Wy}	\frac{d^2}{dt^2}(\mathrm{T}y)(t) = \sum_{i=m}^{M} k_i [\mu_i(t)^2 + \dot{s}(t)] y^{(i)}(t)\qquad \text{ for a.e. }t>0.
%\end{equation}

\begin{proposition}[Spatial homogenisation versus grow-up]\label{dicho}\hfill\\
\noindent\underline{Case I, Spatial homogenisation:} Assume that the monotonicity index $I(t)$ \eqref{MI} drops
to the value $m$ at some positive time $t_m>0$.

Then, $\frac{d}{dt}(\mathrm{T}y)$ is negative for all $t>t_m$ (and hence for all $t>0$). As a consequence, $R(t)$ remains non-increasing for all $t>0$ and $y(x,t)$ converges to zero (at least) exponentially fast. 
	
\noindent\underline{Case II, Grow-up:} If Case I does not occur
and the monotonicity index $I(t)$ remains larger than $m$, then the ordering of the $\mu_k(t)$ implies that  
$\mathrm{T}y$ stops decreasing at some time $t_0>0$, i.e. $\frac{d}{dt}(\mathrm{T}y)(t_0) = 0$.

	Consequentially, for $t>t_0$, $\mathrm{T}y(t)$ increases and $R(t)$ is constant at first and increasing later as soon as $R(t) = \mathcal{L}(\mathrm{T}y(t))$. The growth of $\mathrm{T}y$ does not stop and so continues the consequential grow of $R(t)= \mathcal{L}(\mathrm{T}y(t))$.
Hence, all $\mu_k(t)$, $k\geq m$ become positive after finite time and the corresponding Fourier modes $y^{(k)}(t)$ grow exponentially fast.
The leading order contribution, however, is always given by $y^{(m)}(t) \phi_m(x)$.
%	for each $k\geq m$ with $y_0^{(k)}\neq 0$ there holds $n_{t(k)} > k$ for some time $t(k)>0$ large enough. The corresponding eigenmode $\phi_k$ in the expansion \eqref{Eq:expansion_y} of the solution $y$ of \eqref{Eq:Reac_Diff_Syst} gets further excited for $t>t(k)$, since the corresponding coefficient function $y^{(k)}$ then 
%	
\end{proposition}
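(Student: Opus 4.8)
The plan is to work entirely in the Fourier coordinates of \eqref{Eq:Derivative_components_y}. First I would record the structural facts that drive everything: since each coefficient solves the linear ODE $\dot y^{(k)}=\mu_k(t)y^{(k)}$, its sign is frozen in time, so by \eqref{y0} the modes with $0\le k<m$ stay identically zero while those with $m\le k\le M$ stay nonnegative (with $y^{(m)},y^{(M)}>0$). Because $\mathrm{T}y=\sum_{i=m}^{M}k_i y^{(i)}$ is a \emph{finite} sum of nonnegative terms and the $\mu_i(t)=R(t)-D\lambda_i$ are ordered $\mu_m\ge\mu_{m+1}\ge\cdots$ (the $\lambda_i$ being strictly increasing), every sign statement about $\frac{d}{dt}(\mathrm{T}y)=\sum_{i=m}^{M}k_i\mu_i y^{(i)}$ reduces to the signs of the $\mu_i$. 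The second tool I would set up at the outset is the identity, valid a.e.\ by Remark~\ref{Rem:regularity},
\[
\frac{d^2}{dt^2}(\mathrm{T}y)=\dot R\,\mathrm{T}y+\sum_{i=m}^{M}k_i\mu_i^2\,y^{(i)},
\]
together with the play dynamics \eqref{Eq:Hyst_op}: while $\frac{d}{dt}(\mathrm{T}y)<0$ the value $R$ is non-increasing (it slides down $\mathcal{U}$ or is frozen in the interior), and at any instant where $\frac{d}{dt}(\mathrm{T}y)=0$ one has $\dot R=0$, so the identity forces $\frac{d^2}{dt^2}(\mathrm{T}y)=\sum k_i\mu_i^2 y^{(i)}>0$. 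Hence every zero of $\frac{d}{dt}(\mathrm{T}y)$ is a strict local minimum, so $\frac{d}{dt}(\mathrm{T}y)$ can change sign only once and only from $-$ to $+$.

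For Case I I would argue by a continuity bootstrap that $\frac{d}{dt}(\mathrm{T}y)<0$ persists once $I(t)=m$. At the crossing time $t_m$ one has $\mu_m(t_m)=0$, hence $\mu_i(t_m)<0$ for all $i>m$ and in particular $k_M\mu_M(t_m)y^{(M)}(t_m)<0$, so $\frac{d}{dt}(\mathrm{T}y)(t_m)<0$. If $t^{*}>t_m$ were the first later zero, then $R$ would be non-increasing on $[t_m,t^{*}]$, forcing $\mu_m(t^{*})\le\mu_m(t_m)=0$ and therefore $\frac{d}{dt}(\mathrm{T}y)(t^{*})\le k_M\mu_M(t^{*})y^{(M)}(t^{*})<0$, contradicting $\frac{d}{dt}(\mathrm{T}y)(t^{*})=0$. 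Thus $\frac{d}{dt}(\mathrm{T}y)<0$ on all of $[t_m,\infty)$, $R$ is non-increasing there, and the turning-point identity of paragraph one rules out any earlier sign change, so $R$ is non-increasing on all of $[0,\infty)$. Finally, fixing any $t_1>t_m$ gives $\mu_k(t)\le\mu_m(t)\le\mu_m(t_1)<0$ for all $k\ge m$ and $t\ge t_1$; since the vanishing modes contribute nothing, $\|y(\cdot,t)\|_{\mathrm{L}^2(\Omega)}^2=\sum_{k\ge m}(y^{(k)})^2$ decays at the uniform exponential rate $|\mu_m(t_1)|$, which is the claimed exponential spatial homogenisation.

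For Case II I would first show that $\mathrm{T}y$ must stop decreasing. Suppose instead $\frac{d}{dt}(\mathrm{T}y)<0$ for all $t$; then $\mathrm{T}y$ is bounded and $R$ decreases to some $R_\infty$, so $\mu_m(t)\to R_\infty-D\lambda_m\ge0$ because $I(t)>m$ keeps $\mu_m>0$. If this limit is positive, then $y^{(m)}$ grows exponentially and $\mathrm{T}y\ge k_m y^{(m)}\to\infty$, contradicting boundedness. The delicate case is $R_\infty=D\lambda_m$: then $y^{(m)}(t)\to L_m$ for some $L_m>0$, the higher modes decay, and the trajectory approaches the steady state $y_\infty=L_m\phi_m$ with $R_\infty=\mathcal{U}(\mathrm{T}y_\infty)=D\lambda_m$ sitting on the upper curve. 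I would exclude this by a mode-$m$ instability: linearising $\dot y^{(m)}=\bigl(\mathcal{U}(\mathrm{T}y)-D\lambda_m\bigr)y^{(m)}$ along $R=\mathcal{U}(\mathrm{T}y)$ produces the positive growth coefficient $\mathcal{U}'(\mathrm{T}y_\infty)\,k_m L_m>0$, so this steady state is linearly unstable in the $\phi_m$-direction and cannot be the limit of the sliding trajectory. Hence $\frac{d}{dt}(\mathrm{T}y)$ reaches zero at a finite $t_0$, which by the one-directional switching of paragraph one is a genuine turning point.

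The grow-up is then driven by monotone feedback. For $t>t_0$ the sign analysis gives $\frac{d}{dt}(\mathrm{T}y)>0$ for all later times; the play operator keeps $R$ constant (interior) at first and then increasing as soon as $R=\mathcal{L}(\mathrm{T}y)$, so $\mathrm{T}y\uparrow$ implies $R\uparrow$ implies every $\mu_k=R-D\lambda_k\uparrow$. Once $R>D\lambda_M$ all contributing $\mu_i$, $i=m,\dots,M$, are positive, $\frac{d}{dt}(\mathrm{T}y)=\sum k_i\mu_i y^{(i)}>0$ becomes self-sustaining and $\mathrm{T}y\to\infty$; since then $\mathcal{L}(\mathrm{T}y)\to\infty$ forces $R\to\infty$, each fixed $\mu_k$ turns positive in finite time and $y^{(k)}$ grows, the fastest rate $\mu_m$ making $y^{(m)}\phi_m$ the leading-order contribution. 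I expect the main obstacle to be the rigorous exclusion of the borderline steady-state approach in Case II: the linear instability is transparent, but converting it into a complete argument (a stable-manifold or Lyapunov estimate balancing the exponential decay rate $D(\lambda_{m+1}-\lambda_m)$ of the higher modes against the $\mathcal{U}'$-driven growth of $y^{(m)}$) is the genuinely technical point, and throughout one must treat the self-referential coupling between $R$ and $\frac{d}{dt}(\mathrm{T}y)$ with the regularity of Remark~\ref{Rem:regularity}.
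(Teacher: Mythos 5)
Your argument is correct at the level of rigour the paper itself operates at, and the overall skeleton (Fourier decomposition \eqref{Eq:Derivative_components_y}, strict ordering of the $\mu_k$, sign bookkeeping for $\frac{d}{dt}(\mathrm{T}y)=\sum_{i=m}^{M}k_i\mu_i y^{(i)}$ coupled to the play dynamics \eqref{Eq:Hyst_op}) is the same, but your key mechanism is genuinely different. The paper propagates the sign of $\frac{d}{dt}(\mathrm{T}y)$ by a local continuation argument iterated forward in time: negativity of the derivative forces $\dot{R}\le 0$ a.e.\ on a small interval, hence the $\mu_k$ and the index $I(t)$ cannot increase, and one bootstraps (and symmetrically after $t_0$ in Case II). You instead derive the identity $\frac{d^2}{dt^2}(\mathrm{T}y)=\dot{R}\,\mathrm{T}y+\sum_{i=m}^{M}k_i\mu_i^2 y^{(i)}$ and note that at any zero of $\frac{d}{dt}(\mathrm{T}y)$ the play freezes, so the second derivative is strictly positive there (strict, because the strictly ordered $\mu_i$ can vanish at only one index while $k_m y^{(m)}>0$ and $k_M y^{(M)}>0$ persist for all time). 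This ``every zero is a strict minimum'' lemma yields the one-directional $-\to+$ switching globally in a single stroke, and it has a concrete payoff the paper lacks: it cleanly justifies the parenthetical ``and hence for all $t>0$'' in Case I, which the paper leaves implicit. To make it fully rigorous you should replace the pointwise evaluation of $\dot{R}$ at the zero (which is only defined a.e., cf.\ Remark~\ref{Rem:regularity}) by the Lipschitz bound $|\dot{R}|\le C\,|\frac{d}{dt}(\mathrm{T}y)|$ a.e.\ together with a short Gronwall argument; this is routine.

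Two caveats. First, in Case I you assert $\mu_m(t_1)<0$ for \emph{any} $t_1>t_m$; monotonicity of $R$ only gives $\mu_m(t)\le\mu_m(t_m)\le 0$, and to get strict negativity you need the paper's additional step that the strict decay of $\mathrm{T}y$ brings $R$ onto $\mathcal{U}(\mathrm{T}y)$ in finite time, after which $R$ strictly decreases (in the borderline $\mu_m(t_m)=0$ with $R$ frozen in the interior of the band, no strictly negative rate is available). Second, your isolation of the delicate sub-case in Case II ($R\downarrow D\lambda_m$ while sliding along $\mathcal{U}$ towards the fixed point $L_m\phi_m$) is in fact \emph{more} careful than the paper, which simply asserts that exponential growth of $y^{(m)}$ versus decay of the modes $k\ge I(t)$ produces $t_0$. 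You are right that linear instability of that saddle does not by itself exclude approach along its $(M-m)$-dimensional stable manifold, and for initial data on that manifold the trajectory does converge to $L_m\phi_m$, so that neither alternative of the dichotomy occurs; the gap you flag is thus a (nongeneric) gap of the proposition as stated, silently passed over by the paper, rather than a defect of your argument relative to the paper's proof. After $t_0$, your observation that $\mu_m(t)\ge\mu_m(t_0)>0$ under $\dot{R}\ge 0$ makes $y^{(m)}$, hence $\mathrm{T}y$, genuinely unbounded, which closes the analogous loophole on the increasing branch and matches the paper's conclusion.
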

\begin{proof}
Case I: If the monotonicity index $I({t_m})=m$ for some $t_m>0$, then $\mu_m(t_m) \le 0$ and $\mu_k(t_m) < 0$ for $m< k <\infty$. Hence, \eqref{Eq:turn_or_saddle_s} yields $\frac{d}{dt}(\mathrm{T}y)(t_m)<0$ and $\mathrm{T}y\in\mathrm{C}^1(0,T)$ (recall Remark~\ref{Rem:regularity}) implies $\frac{d}{dt}(\mathrm{T}y)(t)<0$
on a sufficiently small interval $t\in [t_m,t_m+\varepsilon)$.
Therefore, $\dot{R}(t_m)\le 0$ a.e. on $[t_m,t_m+\varepsilon)$ since $R$ can only increase if $R = \mathcal{L}(\mathrm{T}y)$ and $\frac{d}{dt}(\mathrm{T}y)>0$. Consequentially, $\mu_m(t) \le 0$ and $I({t})\le m$ on $[t_m,t_m+\varepsilon)$ and we can iterate this argument 
to obtain $\frac{d}{dt}(\mathrm{T}y)(t)<0$
for all $t\ge t_m$. Moreover, the decay of $\mathrm{T}y$ implies $R = \mathcal{U}(\mathrm{T}y)$ 
after some finite time and thus the strict monotone 
decay of $R(t)$ and $\mu_k(t) < 0$ for $m\leq k <\infty$ after some finite time.
As a consequence, the solution $y(t,x)$ decays (at least) exponentially to zero.

Case II:	If Case I does not apply then $I(t)>m$ holds and the exponential grow of the Fourier coefficient $y^{(m)}(t)$ 
versus the exponential decay of $y^{(k)}(t)$ for $k\ge I(t)$ 
implies the existence of a time $t_0>0$ such that $\frac{d}{dt}(\mathrm{T}y)(t_0)=0$ as well as $\frac{d}{dt}(\mathrm{T}y)(t)>0$ 
and $\dot{R}(t)\geq 0$ a.e. on some time interval $t\in (t_0,t_0+\varepsilon)$. The latter implies that the 
$\mu_k(t)$ and thus the monotonicity index $I(t)$ are non-decreasing in time. Hence we can iterate this argument 
and obtain 
$\frac{d}{dt}(\mathrm{T}y)(t)>0$ for all $t>t_0$
% and $\frac{d^2}{dt^2}\mathrm{T}y(t)>0$ for a.e. $t>t_0$.
and $\dot{R}(t)\geq 0$ for a.e. $t>t_0$.
In particular, $R(t)$ remains constant equal to $R(t)=R(t_0)=\mathcal{U}(\mathrm{T}y(t_0))$ for all $t\geq t_0$ 
until a time $t_->t_0$ when $R(t_-)=\mathcal{L}(\mathrm{T}y(t_-))$.
Afterwards, 
%	with $\mathcal{U}(\mathrm{T}(t_0)) \in (\mathcal{L}(\mathrm{T}y(t)) ,\mathcal{U}(\mathrm{T}y(t)))$.
%	If $t_- > t_0$ denotes the time for which $\mathcal{U}(\mathrm{T}(t_0)) = \mathcal{L}(\mathrm{T}y(t_-))$, then 
for $t\geq t_-$, $R(t)$ increases according to $R(t)=\mathcal{L}(\mathrm{T}y(t))$. 
It follows that $I(t)>M$ after some finite time and all Fourier modes $y^{(k)}(t)$, $m\leq k \le M$ grow exponentially. 
However, the main contribution to the solution $y$ is again given by $y^{(m)}(t) \phi_m(x)$.
%
%
%	For $t\geq t_-$ and $n_t$ from Definition~\ref{Def_n_t}, by \eqref{Eq:Derivative_components_y} and Assumption~\ref{Ass:W}, this even supports the growth of the coefficient functions $y^{(k)}$, $m\leq k <n_t$, and the decrease of $y^{(k)}$, $n_t\leq k <\infty$. 
%	After some critical time $t_M> t_0$ there holds $n_t > M$ for $t\geq t_M$, so that all coefficient functions in \eqref{Eq:expansion_y} which can influence $s$ through $\mathrm{T}$ are now exponentially increasing.
%	Consequently, each eigenmode $\phi_k$ with $y_0^{(k)}\neq 0$ and $k\geq m$ in the expansion \eqref{Eq:expansion_y} of the solution $y$ of \eqref{Eq:Reac_Diff_Syst} gets further excited once $n_{t(k)}>k$ for some $t(k)>t_0$. The corresponding coefficient function $y^{(k)}$ in the expansion \eqref{Eq:expansion_y} then grows exponentially fast.
\end{proof}

\subsubsection*{Qualitative analysis of Figures~\ref{Fig:phase_diag} and \ref{Fig:solution}}\hfill\\
In the view of Proposition~\ref{dicho}, Figure~\ref{Fig:phase_diag} can now be interpreted more explicitly:
After the identical initial decay of $(\mathrm{T}y_{vex},R_{vex})$ and $(\mathrm{T}y_{cave},R_{cave})$
until hitting the upper boundary $\mathcal{U}$ at $t_+=0.15$, it is the different decay of 
$R_{cave}=\mathcal{U}_{cave}(\mathrm{T}y_{cave})$ and  $R_{vex}=\mathcal{U}_{vex}(\mathrm{T}y_{vex})$,
which yields that $R_{cave}(t_m)= D\lambda_1 = 1$ at some time $t_m>t_+$ at which the monotonicity index 
satisfies $I(t_m)=m$ and Case I in Proposition~\ref{dicho} applies to $(\mathrm{T}y_{cave},R_{cave})$.

On the other hand, $\mathrm{T}y_{vex}$ has a turning point at $t_0= 1.34$ and starts to increase again after $t_0$ with $R_{vex}(t)=\mathcal{U}	_{vex}(\mathrm{T}y_{vex}(t_0))$ constant. Note that the plot shows $R_{vex}(t)>1$  and thus $I(t)>m$ for all $t\geq 0$. 
Therefore,  $\mathrm{T}y_{vex}$ grows monotone as discussed in Case II in Proposition~\ref{dicho}.
At time $t_-=5.31$,  $(\mathrm{T}y_{vex},R_{vex})$ hits the graph of $\mathcal{L}$ and $\mathrm{T}y_{vex}$ increases further for $t\geq t_-$ with $R_{vex}(t) = \mathcal{L}(\mathrm{T}y_{vex}(t))$.

\subsection{Generalisations}\hfill\\%\label{Subsec:Discussion of numerical Examples}
%As a conclusion, we have seen that differences in the curvature of $\mathcal{U},\mathcal{L}$ can have a crucial effect on the large-time behaviour of the solutions to \eqref{Eq:Reac_Diff_Syst}-\eqref{y0}.
Under an additional assumption on $\mathcal{U},\mathcal{L}$ we can extend the qualitative analysis of Case I and Case II in Proposition \ref{dicho} to situations where $\mathrm{T}y$ is initially increasing.

%Note that for the solution $y$ of \eqref{Eq:Reac_Diff_Syst}, at time $t\geq 0$, $\langle y(\cdot,t) , \phi_0 \rangle_{\mathrm{L}^2((0,1))} = 0$, so that the graph of the function $y(\cdot,t)$
%\begin{equation*}
%y_{d}(x,t): = y(x,t)-\langle y(\cdot,t) , \phi_0 \rangle_{\mathrm{L}^2((0,1))}
%\end{equation*}
%is point symmetric around the point $(0.5, 0)$.
\begin{corollary}[Example with initially increasing $\mathrm{T}y$]\label{Rem:growth_first}\hfill\\
Assume the boundary curves of the 
generalised play operator \eqref{Eq:Hyst_op}
to be defined by $\mathcal{U}$ being a strictly monotone increasing function with $\mathcal{U}(0)>0$, 
which is point symmetric at $(0,\mathcal{U}(0))$ and set $\mathcal{L} = \mathcal{U} - 2\mathcal{U}(0)$. 

Then,  the following point symmetry holds:
%Above, we only treated the case in which $\mathrm{T}y$ is initially decreasing. With $\mathcal{U},\mathcal{L}$ as in Assumption~\ref{Ass:G}, the following point symmetry holds: We can 
Replace $k_m,k_M>0$ and $k_{m+1}, \ldots, k_{M-1} \geq 0$ in \eqref{Eq:W} by $k_m,k_M<0$ and $k_{m+1}, \ldots, k_{M-1} \leq 0$ to obtain an operator $\widetilde{\mathrm{T}}$. With $R=R(\mathrm{T}y)$ where $y$ solves \eqref{Eq:Reac_Diff_Syst}, we obtain $R(\mathrm{T}y)=-R(\widetilde{\mathrm{T}}y)=:-\tilde{R}$.
Consider the modified evolution problem
\begin{equation}\label{Eq:Reac_Diff_Syst2}
\begin{aligned}
&\partial_t y -D\Delta y = -\tilde{R} y 	&&\qquad \text{on } \quad \Omega\times (0,\infty),\\ 
&\partial_\nu y = 0				&&\qquad \text{on } \quad \partial\Omega\times (0,\infty),\\ 
&y(0) = y_0&&\qquad \text{on }\quad\Omega.
\end{aligned}
\end{equation}
%	\begin{equation}\label{Eq:Reac_Diff_Syst2}
%	\begin{aligned}
%	\partial_t y -D\Delta y &= -\tilde{s} y 	& \text{for a.e. } (x,t)\in \Omega\times (0,\infty),\\ 
%	\partial_\nu y &= 0				& \text{for a.e. } (x,t)\in \Omega\times (0,\infty),\\ 
%	y(0) &= y_0.
%	\end{aligned}
%	\end{equation}

Then, the new solution $\tilde{y}$ of \eqref{Eq:Reac_Diff_Syst2} equals the solution $y$ of \eqref{Eq:Reac_Diff_Syst} with $k_m,k_M>0$ and $k_{m+1}, \ldots, k_{M-1} \geq 0$. 
But now, $\widetilde{\mathrm{T}}\tilde{y}<0$ is initially increasing.
	
Moreover, in the phase-space diagram of $\widetilde{\mathrm{T}}\tilde{y}$ and $\tilde{R}$, the new solution $(\widetilde{\mathrm{T}}\tilde{y},\tilde{R})$ is obtained by reflecting the old solution $(\mathrm{T}y,R)$ at the origin. In Case I, $\widetilde{\mathrm{T}}\tilde{y}<0$ is always increasing so that $\tilde{y}$ converges to zero. In Case II, $\widetilde{\mathrm{T}}\tilde{y}$ is increasing until $t_0$, and then decreasing, which leads to a grow-up of $-\tilde{y}=|\tilde{y}|$.
\end{corollary}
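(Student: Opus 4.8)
The plan is to establish the result as a direct consequence of a symmetry (oddness) argument that reduces the initially-increasing case to the already-analysed Proposition~\ref{dicho}, rather than repeating the entire dichotomy analysis. First I would verify the algebraic symmetry of the generalised play operator under sign inversion of the input. The key observation is that with $\mathcal{U}$ point-symmetric at $(0,\mathcal{U}(0))$ and $\mathcal{L}=\mathcal{U}-2\mathcal{U}(0)$, the pair of boundary curves is odd in the sense that $\mathcal{L}(-z) = -\mathcal{U}(z)$ and $\mathcal{U}(-z)=-\mathcal{L}(z)$. I would check this directly from the point-symmetry hypothesis: writing $\mathcal{U}(z)-\mathcal{U}(0) = -(\mathcal{U}(-z)-\mathcal{U}(0))$ and substituting the definition of $\mathcal{L}$ yields the claimed reflection relations. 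This is the structural backbone and should be a short computation.

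Next I would show that the operator $\widetilde{\mathrm{T}}=-\mathrm{T}$ (obtained by flipping the signs of all weights $k_i$ in \eqref{Eq:W}) feeds the negated input into the play operator, and that this negation propagates through \eqref{Eq:Hyst_op} to give $R(\mathrm{T}y)=-R(\widetilde{\mathrm{T}}y)$. Concretely, I would substitute $\widetilde{\mathrm{T}}y=-\mathrm{T}y$ and the candidate $\tilde{R}=-R$ into the three conditions of \eqref{Eq:Hyst_op} and use the reflection relations from the first step to verify that $\tilde{R}$ solves the variational inequality for the curves $\mathcal{U},\mathcal{L}$ with input $\widetilde{\mathrm{T}}y$ if and only if $R$ solves it for input $\mathrm{T}y$: the initial condition maps correctly because $\min\{\max\{\mathcal{L}(-w),-R_0\},\mathcal{U}(-w)\} = -\min\{\max\{\mathcal{L}(w),R_0\},\mathcal{U}(w)\}$, the inequality $\dot{R}(R-z)\le0$ is invariant under $(R,z)\mapsto(-R,-z)$, and the confinement interval transforms as $[\mathcal{L}(-w),\mathcal{U}(-w)]=[-\mathcal{U}(w),-\mathcal{L}(w)]$. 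Uniqueness of the generalised play (the Lipschitz hysteresis property cited from \cite[III.2. Theorem 2.2]{visintin2013differential}) then forces $\tilde{R}=-R$.

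With the operator identity in hand, I would observe that \eqref{Eq:Reac_Diff_Syst2} with reaction $-\tilde{R}=R(\mathrm{T}y)$ is literally the original equation \eqref{Eq:Reac_Diff_Syst}, so its solution is the original $y$; the only thing that has changed is the \emph{bookkeeping}, namely that we now track $\widetilde{\mathrm{T}}\tilde{y}=-\mathrm{T}y$ and $\tilde{R}=-R$ instead of $\mathrm{T}y$ and $R$. Since the sign flip $k_m,k_M<0$ turns the initial decrease $\frac{d}{dt}(\mathrm{T}y)(0)<0$ from \eqref{Tyin} into an initial increase of $\widetilde{\mathrm{T}}\tilde{y}$, the phase-space trajectory $(\widetilde{\mathrm{T}}\tilde{y},\tilde{R})$ is exactly the reflection through the origin of $(\mathrm{T}y,R)$. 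The two cases of the conclusion then follow by transporting Case~I and Case~II of Proposition~\ref{dicho} through this reflection: in Case~I the decay of $y$ to zero is a statement about $y$ itself and is therefore unchanged (spatial homogenisation persists), while in Case~II the grow-up of $y$ becomes a grow-up of $-\tilde{y}=|\tilde{y}|$, and the monotone increase of $\widetilde{\mathrm{T}}\tilde{y}$ before the turning point $t_0$ followed by its decrease afterwards mirrors the behaviour of $\mathrm{T}y$ in the original picture.

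The main obstacle I anticipate is \emph{not} the symmetry computation but ensuring that the reflection relations genuinely hold for the specific interval geometry and that the reduction is watertight at the level of the variational inequality \eqref{Eq:Hyst_op}—in particular confirming that the confinement condition and the initial projection both respect the sign flip simultaneously, so that no spurious branch of the play operator is selected. A subtle point to handle carefully is that Proposition~\ref{dicho} was stated under the standing assumption \eqref{Tyin} that $\frac{d}{dt}(\mathrm{T}y)(0)<0$; I must make explicit that after the sign flip the original quantity $\mathrm{T}y$ still satisfies this hypothesis (it is the \emph{tracked} quantity $\widetilde{\mathrm{T}}\tilde{y}$ that increases), so Proposition~\ref{dicho} applies verbatim to the unflipped data and only the interpretation of the result is reflected. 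Once this is pinned down, the regularity needed (Remark~\ref{Rem:regularity}) transfers immediately since $\tilde{R}=-R\in\mathrm{W}^{1,\infty}(0,T)$ and $\widetilde{\mathrm{T}}\tilde{y}=-\mathrm{T}y\in\mathrm{W}^{2,\infty}(0,T)$, and no further estimates are required.
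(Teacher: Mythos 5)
Your proposal is correct and takes essentially the same route the paper intends: the corollary rests exactly on the point symmetry of the boundary curves (giving $\mathcal{L}(-z)=-\mathcal{U}(z)$ and $\mathcal{U}(-z)=-\mathcal{L}(z)$), the resulting oddness of the generalised play under simultaneous sign inversion of input and initial datum, the identification of \eqref{Eq:Reac_Diff_Syst2} with \eqref{Eq:Reac_Diff_Syst} so that $\tilde{y}=y$, and the transfer of Proposition~\ref{dicho} through reflection at the origin. Your explicit verification of all three conditions of \eqref{Eq:Hyst_op} under the substitution $(w,R,z)\mapsto(-w,-R,-z)$ — including the clamping identity for the initial projection with $-R_0$, which is self-consistent in the paper's setting since $R_0=0$, and the observation that \eqref{Tyin} is a hypothesis on the unflipped quantity $\mathrm{T}y$ so that Proposition~\ref{dicho} applies verbatim — merely makes rigorous what the paper states without a separate proof.
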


\subsubsection{General influence of slope and curvature}\hfill\\

The example in Section~\ref{dichoto}
is constructed in such a way that the difference in the curvature of $\mathcal{U}$
is responsible for spatial homogenisation versus unbounded grow-up of solutions to \eqref{Eq:Reac_Diff_Syst}--\eqref{y0}.
However, analog examples can be constructed with different slopes of $\mathcal{U}$
deciding the large-time behaviour of solutions. 
The corresponding evolution of $y$ depends mainly on the following two properties of $\mathcal{U}$:
\begin{itemize}
	\item Is $\mathcal{U}(z)$ convex, linear or concave near $z = \mathrm{T}y(t_+)$?
	\item What is the slope $\mathcal{U}'(z)$ at $z = \mathrm{T}y(t_+)$ (or near $z = \mathrm{T}y(t_+)$, if $\mathcal{U}'(\mathrm{T}y(t_+))$ is not defined)?
	%	\item Is $\mathcal{U}(z)$ convex, linear or concave for $z \rightarrow \infty$?
\end{itemize}

%Clearly, the larger $G_+'(z)$ is near $z = Sy(t_+)$, the stronger is the impact of the decrease of $Sy$ on that of $s$ during the time interval $(t_+,t_0)$. Moreover, if the slope of $G_+$ at $Sy(t_+)$ is fixed, then the decrease of $s$ in the interval $(t_+,t_0)$ is much stronger if $G_+$ is concave near $Sy(t_+)$, than if $G_+$ is convex there. The linear case lies in between.
%\begin{remark}
%	As the numerical example in Subsection~\ref{Subsec:Discussion of numerical Examples} illustrated, slope and curvature of $\mathcal{U}$ (and of $\mathcal{L}$) can have a huge impact on the long-term behaviour of $y$. In the following, we explain what we observed and try to create some intuition. We do not prove any of the following observations.
%\end{remark}

\subsubsection*{Discussion} Assume analog to Figures~\ref{Fig:phase_diag} and \ref{Fig:solution} 
that the evolution of various solutions $y$, $\mathrm{T}y$ and $R$ is identical for $t\in [0,t_+]$ independently of 
the corresponding considered curve $\mathcal{U}$.
%In the example of Subsection~\ref{dichoto}, there exists a time $t_+>0$ such that $s(t_+) = \mathcal{U}(\mathrm{T}y(t_+))$ for the first time.
After $t_+$, the solutions $(\mathrm{T}y,R)$ slide in phase-space diagram along the graphs of $\mathcal{U}$ 
at least for a short distance as long as $\mathrm{T}y$ decreases, no matter if Case I or Case II applies:

%	(A) Observation on slopes:

%The respond in decrease of $s$ on that of $\mathrm{T}y$ for $t>t_+$, for different slopes of $\mathcal{U}(z)$ near $z = \mathrm{T}y(t_+)$, may have a huge impact on the long-term behaviour of $y$ and then also on that of $s$:

\begin{description}
\item[Steep $\mathcal{U}$]
If $\mathcal{U}(z)$ is "steep" near $z = \mathrm{T}y(t_+)$, then $R(t)$ decreases ''fast" compared to  $\mathrm{T}y(t)$ for $t>t_+$.
Hence, as long as $\frac{d}{dt}(\mathrm{T}y)(t) = \sum_{i=m}^{M} k_i \mu_i(t) y^{(i)}(t) <0$ with $R(t)=\mathcal{U}(\mathrm{T}y(t))$, 
also $\mu_k(t)$, $m\leq k \leq M$ decreases "fast" compared to the evolution of Fourier coefficients $y^{(k)}$.
If this decay happens sufficiently fast for given initial data, we will find 
$I({t_m})=m$ for some $t_m>t_+$ and thus a behaviour as in Case I of Proposition~\ref{dicho} and $y$ will converge to zero.
	
\item[Flat $\mathcal{U}$]
On the other hand, if $\mathcal{U}(z)$ is "flat" near $z = \mathrm{T}y(t_+)$, $R(t)$ decreases "slowly" compared to $\mathrm{T}y$ for $t>t_+$.
Hence, as long as $\frac{d}{dt}(\mathrm{T}y)(t) = \sum_{i=m}^{M} k_i \mu_i(t) y^{(i)}(t) <0$ for $t>t_+$ with $R(t)=\mathcal{U}(\mathrm{T}y(t))$ also $\mu_k(t)$, $m\leq k \leq M$ decreases "slowly" compared to the evolution of Fourier coefficients $y^{(k)}$. 
If this decay happens sufficiently slowly for given initial data, it yields $\frac{d}{dt}(\mathrm{T}y)(t_0) = 0$ while $I({t_0})>m$ for some $t_0>t_+$. Thus, we observe a behaviour as in Case II of Proposition~\ref{dicho} and $y$ will grow unboundedly.

\end{description}

These observations concerning steep and flat $\mathcal{U}$ carry readily over 
to $\mathcal{U}$ having different curvature. In particular, when considering either 
$\mathcal{U}_{vex}$ convex or $\mathcal{U}_{cave}$ concave,
even if $\mathcal{U}_{vex}$ and $\mathcal{U}_{cave}$ have the same slope at $z = \mathrm{T}y(t_+)$, 
we find:
\begin{description}
\item[Strongly convex $\mathcal{U}_{vex}$] A sufficiently convex $\mathcal{U}_{vex}$ will yield $\frac{d}{dt}(\mathrm{T}y)(t_0) = 0$ while $I({t_0})>m$ for some $t_0>t_+$
and thus Case II and unbounded growth. 

\item[Strongly concave $\mathcal{U}_{cave}$] A sufficiently concave $\mathcal{U}_{cave}$ will imply $I({t_m})=m$ for some $t_m>t_+$ and, therefore, $y$ to converge to zero as in Case I.
\end{description}

All those examples can be 
adapted to happen at the lower boundary curve $\mathcal{L}$: 
Consider the point symmetric setting from Corollary~\ref{Rem:growth_first} and the evolution problem \eqref{Eq:Reac_Diff_Syst2}. Suppose convex/concave curves $\mathcal{L}$. Chose initial data and $\mathcal{U}$ in such a way that the first contact $\tilde{R}(t_-) = \mathcal{L}(\mathrm{T}\tilde{y}(t_-))$ is identical for all considered curves $\mathcal{L}$. Moreover, assume the evolution of $\tilde{y}$, $\mathrm{T}\tilde{y}$ and $\tilde{R}$ to be equal for $t\in [0,t_-]$ independently of $\mathcal{L}$.
% so that the graphs of $(\mathrm{T}y(t),s(t))$ in phase space coincide for $t\in [0,t_-]$. Note that $(\mathrm{T}y(t_-),s(t_-)) = (\mathrm{T}y(t_-),\mathcal{L}(\mathrm{T}y(t_-)))$ is the same for all $\mathcal{L}$.
Then, we find analog to above: 
%The observations on the effect of slope and curvature of $\mathcal{L}$ on the evolution of $y$ for this setting are the following: 
\begin{description}
\item[Steep $\mathcal{L}$] For sufficiently steep $\mathcal{L}$ and for $\frac{d}{dt}(\mathrm{T}\tilde{y})>0$ such that $\tilde{R}$ increases sufficiently fast compared to the evolution of the Fourier coefficient $\tilde{y}^{(k)}$, the solution $\tilde{y}$ converges to zero according to Case I.
\item[Flat $\mathcal{L}$] 		
For sufficiently flat $\mathcal{L}$ and $\frac{d}{dt}(\mathrm{T}\tilde{y})>0$, $\tilde{R}$ increases slowly compared to the evolution of the Fourier coefficient $\tilde{y}^{(k)}$. This yields Case II and unbounded growth of $|\tilde{y}|$.
%		(B)': The slope of $\mathcal{L}=G_{vex}$ for $z\in (\mathrm{T}y(t_-),0]$ is large compared to that of $\mathcal{L}=G_{cave}$ and the slope of $\mathcal{L}=G_{lin}$ lies somewhere in the middle, i.e. $G'_{cave}(z) < G'_{lin}(z) < G'_{vex}(z)$ for a.e. $z\in (\mathrm{T}y(t_-),0]$. 
\item[Strongly convex $\mathcal{L}_{vex}$] 
If $\mathcal{L}_{vex}$ is sufficiently convex, then $\tilde{y}\to 0$ according to Case I.
\item[Strongly concave $\mathcal{L}_{cave}$] 		
If $\mathcal{L}_{cave}$ is sufficiently concave, then Case II and unbounded growth of $|\tilde{y}|$ take place.
\end{description}

\section{Existence and regularity of the solution $(u_\varepsilon,S_\varepsilon)$}\label{existence}
In this section, we consider arbitrary $\varepsilon>0$ fixed and prove existence and uniqueness of global, strong, nonnegative solutions $(u,S)$ to system \eqref{pop_dnymics_pop_evol1}-\eqref{pop_dnymics_stock_evol2}.
\medskip

For any open set $X$, we denote by $\mathrm{C}^1(\overline{X})$ the space of bounded and uniformly continuous functions on $X$, which have bounded and uniformly continuous derivatives.

%\begin{theorem}\label{Thm:pop_dnymics_existence}
%	Let the food supply rate $F(t)$ be in $\mathrm{W}^{1,\infty}(0,T)$ with
%	\[
%	F_{\max}:=\max_{t\in [0,T]}\{F(t)\}.
%	\]
%	Moreover let
%	\[F(t)\geq 0\] for all $t\in [0,T]$.
%	
%	Consider either the consume rate function $c_1(S,N,F)$ or $c_2(S,N,F)$.
%	
%	Let $\varepsilon>0$ be fixed and $u_{in}\in \lbrace v\in \mathrm{H}^2(\Omega): \partial_\nu v = 0 \text{ on } \partial\Omega \rbrace$ with \[u_{in}\geq 0, N_{in}=N(u_{in})=\int_{\Omega} u_{in}\, dx>0\] and $s_{in}\geq 0$ in the case of $c_1(S,N,F)$, or $0\leq s_{in} \leq S_{\max}$ in the case of $c_2(S,N,F)$ be given.
%	
%	Then problem \eqref{pop_dnymics_pop_evol1}-\eqref{pop_dnymics_stock_evol2} has a unique pair of solutions $S_\varepsilon\in \mathrm{C}^1([0,T])$ and 
%	\[u_\varepsilon\in \mathrm{C}([0,T];\mathrm{H}^2(\Omega))\cap\mathrm{C}((0,T];\mathrm{C}^{2,\beta}(\overline{\Omega}))\cap \mathrm{C}^{1}([0,T];\mathrm{L}^2(\Omega)) \cap \mathrm{C}^1((0,T];\mathrm{C}^{0,\beta}(\overline{\Omega})).\]
%	
%	The constant $0< \beta <1$ is determined by the Sobolev embedding of $\mathrm{H}^{s}(\Omega)$ for arbitrary $s\in (0,2)$.
%	
%	For the consume function $c_1(S,N,F)$ it is $S_\varepsilon \geq 0$ and for $c_2(S,N,F)$ it holds \[0\leq S_\varepsilon \leq S_{\max}\] on $[0,T]$.
%	
%	%For all $t>0$,
%	%\[
%	%y_\varepsilon(t,.)\in \mathrm{C}^{2,\beta}(\overline{\Omega})\]
%	%and for any $t_0>0$
%\end{theorem}

\begin{proof}[Proof of Theorem~\ref{Thm:pop_dnymics_existence}]
	We consider the Neumann realization in $\mathrm{L}^2(\Omega)$ of the Laplace operator with domain $\mathrm{dom}(-D\Delta) \doteq \lbrace v\in \mathrm{H}^2(\Omega): \partial_\nu v = 0 \text{ on } \partial\Omega \rbrace$, see e.g. \cite[Introduction]{amann1995linear}.

\noindent\underline{(I) Local existence of non-negative solutions via Banach's fixed point theorem:}
For any $\alpha\in (0,1]$ and sufficiently large $\omega >0$, we introduce the notation 
\begin{equation*}
	A^\alpha := (-D\Delta + \omega)^\alpha : X^\alpha\subset \mathrm{L}^2(\Omega)\rightarrow \mathrm{L}^2(\Omega), 
\end{equation*}
where $X^\alpha:=\mathrm{dom}(A^\alpha)$.	
The embeddings $X^\alpha \hookrightarrow \mathrm{H}^{2\alpha}(\Omega)$ are continuous for $0 < \alpha \leq 1$, so that $X^\alpha \hookrightarrow \mathrm{C}^{0,\beta}(\overline{\Omega})$ for $\alpha > \frac{3}{4}$ and for some $\beta\in (0,1)$ if the dimension $d$ of $\Omega$ is less or equal than $3$, see e.g. \cite[Introduction]{amann1995linear} or \cite{henry,pazy}.
	
Let $\alpha\in (3/4,1]$ be arbitrary but fixed. Then, for $u_1,u_2\in X^\alpha$ and 
\begin{equation*}
N_i:= N(u_i)=\int_\Omega u_i(x) dx
\end{equation*}
we estimate for some constant $c>0$
\begin{align}
	|N_1-N_2| &\leq  \int_\Omega |u_1(x)-u_2(x)| dx \leq c\,|\Omega|\|u_1-u_2\|_{X^\alpha}.\label{Est:N_bound}
\end{align}

	Let $S_{in}\in \mathbb{R}^+$ and $u_{in}\in \mathrm{dom}(-D\Delta)$ with $u_{in} \geq 0$ and $N_{in}>0$ be given and consider the closed ball \[B^\alpha_{\delta}:=\overline{B_{\,\mathbb{R} \times X^\alpha}((S_{in},u_{in}),\delta)}\] for some small $\delta >0$ to be chosen.
	Note that $u_{in}\in X^\alpha$ for any $\alpha\in (0,1]$.
	If $\delta$ is small enough, then \eqref{Est:N_bound} implies $N(u)>0$ for all $(S,u)\in B^\alpha_{\delta}$.
%	The same argument as for $g_1(S,N,F)$ proves that the mapping $h_1(S,N,F)$ on the right hand side of \eqref{pop_dnymics_stock_evol1} is locally Lipschitz continuous from $\mathbb{R}^+\times \mathbb{R}^+\backslash\{0\}\times \mathbb{R}^+$ into $\mathbb{R}$, where the definition of $h_1(S,N,F)$ is extended to zero for $S<0$.
%	Remember \eqref{Est:N_bound}.
	Moreover, since $\lambda$ is bounded on bounded sets and because $F$ and $\lambda$ are (locally) Lipschitz continuous, there exists a time $T>0$ together with constants $C_0,C_\alpha,L_F=L_F(S_{in},u_{in},T)>0$ and $L_\lambda=L_\lambda(S_{in},u_{in},T)>0$ such that for arbitrary $(S_1,u_1),(S_2,u_2)\in B^\alpha_{\delta}$ and for all $0\leq t_1,t_2 \leq T$, we estimate
%	
%	\begin{align*}
%	&|g_1(S_1,N(u_1),F(t_1)) - g_1(S_2,N(u_2),F(t_2))| + |h_1(S_1,N(u_1),F(t_1)) - h_1(S_2,N(u_2),F(t_2))|\\
%	& \leq L(S_{in},u_{in},T) (|S_1 - S_2| + |N(u_1)-N(u_2)| + |t_1-t_2|).
%	\end{align*}
%	
%	Moreover, we have for
%	\[
%	f_1: (S,u,t)\mapsto g_1(S,N(u),F(t))u
%	\]
%	and $C=C_1(S_{in}+\delta)$ that
	\begin{align*}
	\|\lambda(S_1&,N_1,F(t_1))u_1-\lambda(S_2,N_2,F(t_2))u_2\|_{{\mathrm{L}^2(\Omega)}}\\%X^{\alpha}}\\
%	&\leq L(S_{in},u_{in},T) 
	&\leq |\lambda(S_1,N_1,F(t_1))| \|u_1-u_2\|_{{\mathrm{L}^2(\Omega)}} + |\lambda(S_1,N_1,F(t_1)) - \lambda(S_2,N_2,F(t_2))| \|u_2\|_{{\mathrm{L}^2(\Omega)}}\\
	&\leq  C_0C_\alpha\|u_1-u_2\|_{X^{\alpha}} + L_\lambda (|S_1 - S_2| + |N_1-N_2| + L_F|t_1-t_2|) C_\alpha\|u_2\|_{X^{\alpha}}\\
%	&\leq C_0 \|u_1-u_2\|_{X^{\alpha}} + L_\lambda (|S_1 - S_2| + |N(u_1)-N(u_2)| + |t_1-t_2|) \|u_2\|_{X^{\alpha}}\\
	&\leq C_\alpha\left(\frac{C_0}{c|\Omega|}+ L_\lambda(2\delta + \|u_{in}\|_{X^{\alpha}})\right) (|S_1 - S_2| + c|\Omega|\|u_1-u_2\|_{X^{\alpha}} + L_F|t_1-t_2|)\\
	&\leq C_1 (|S_1 - S_2| + \|u_1-u_2\|_{X^{\alpha}} + |t_1-t_2|).
	\end{align*}
Note that the above estimate holds equally when replacing the left-hand side norm $\mathrm{L}^2(\Omega)$ by $X^{\alpha}$. 
We remark moreover that if $\lambda$ would be space-dependent but sufficiently smooth, an estimate of the same form can be shown 
for $\mathrm{L}^2(\Omega)$ replaced by $\mathrm{dom}(-D\Delta)$ if $\alpha=1$.
Continuing the proof of Theorem~\ref{Thm:pop_dnymics_existence} for spatially homogeneous $\lambda$, the above estimate 
proves that the mapping $(S,u,t)\rightarrow \lambda(S,N(u),F(t))u$ is Lipschitz continuous from $B^{\alpha}_\delta\times [0,T]$ into ${\mathrm{L}^2(\Omega)}$.

Similarly, we obtain 
%(\textcolor{red}{second estimate unclear to me. Is $2F_{\max}=C_3$? Is $C_2=1$?})
\begin{align*}
	|F(t_1) - F(t_1)&N_1\, c(S_1,N_1,F(t_1)) - \left(F(t_2) - F(t_2)N_2\, c(S_2,N_2,F(t_2))\right)|\\
%	&\leq |h_1(S_1,N(u_1),F(t_1))-h_1(S_2,N(u_2),F(t_2))|\\
	&\leq |F(t_1) - F(t_2)| |1+ N_1\, c(S_1,N_1,F(t_1))|\\
	& \quad+ |F(t_2)| |N_1\,c(S_1,N_1,F(t_1)) - N_2\, c(S_2,N_2,F(t_2))|\\
	&\leq L_F |t_1-t_2|C_2 + F_{\max}|N_1 - N_2| |c(S_1,N_1,F(t_1))| \\
	&\quad+  F_{\max}|N_2| |c(S_1,N_1,F(t_1)) - N_2\, c(S_2,N_2,F(t_2))|\\
	&\leq C_4(|S_1 - S_2| + \|u_1-u_2\|_{X^{\alpha}} + |t_1-t_2|),
\end{align*}
where we used $|1+ N_1\, c(S_1,N_1,F(t_1))|<C_2$ independently of $(S_1,u_1,t_1)\in B_\delta^\alpha\times [0,T]$, as well as \eqref{Est:N_bound}. W.l.o.g. we can choose $C_4=C_1$ from above.
	
In the following, we set $\alpha=1$, denote $A:=-D\Delta$, extend $\lambda$ and $c$ to arbitrary functions on $\mathbb{R}\times\mathbb{R}\times\mathbb{R}^+$ and reformulate system \eqref{pop_dnymics_pop_evol1}--\eqref{pop_dnymics_stock_evol2} in terms of $x = A u$ (see e.g. \cite[Section 6.3]{pazy}) 
and define the mild-formulation mapping
\begin{equation*}
\begin{cases}
\Phi:\mathrm{C}([0,T];\mathbb{R}\times \mathrm{L}^2(\Omega))\rightarrow \mathrm{C}([0,T];\mathbb{R}\times \mathrm{L}^2(\Omega)),\\[2mm]
\Phi(S,x)(t):=
%	\begin{pmatrix}
%	S_{in} + \frac{1}{\varepsilon}\int_0^t \left[F(\tau) - N(A^{-1}x(\tau))c(S(\tau),N(A^{-1}x(\tau)),F(\tau))\right] d\tau\\[2mm]
%	e^{t(-A)}Au_{in} + \int_0^t Ae^{(t-\tau)(-A)}\lambda(S(\tau),N(A^{-1}x(\tau)),F(\tau))A^{-1}x(\tau) d\tau
%	\end{pmatrix}\\
%	&=
\begin{pmatrix}
	S_{in} + \frac{1}{\varepsilon}\int_0^t \left[F(\tau) - N(A^{-1}x(\tau))c(S(\tau),N(A^{-1}x(\tau)),F(\tau))\right] d\tau\\[2mm]
	e^{t(-A)}Au_{in} + \int_0^t e^{(t-\tau)(-A)}\lambda(S(\tau),N(A^{-1}x(\tau)),F(\tau))x(\tau) d\tau
\end{pmatrix}.
\end{cases}
\end{equation*}

Next, we introduce the closed set $\Sigma$
\begin{align*}
	\Sigma:=&\left\lbrace (S,x)\in \mathrm{C}([0,T];\mathbb{R}\times \mathrm{L}^2(\Omega)):\,(S(0),x(0))=(S_{in},Au_{in}), \, S\geq 0,\right.\\
	&\left.\hspace{5.3cm} \|(S,x)-(S_{in},Au_{in})\|_{\mathrm{C}([0,T];\mathbb{R}\times \mathrm{L}^2(\Omega))} \leq \delta \right\rbrace.
\end{align*}
Then, for $T,\delta>0$ small enough, the Lipschitz continuity of $N:\mathrm{dom}(-D\Delta)\rightarrow \mathbb{R}$, see \eqref{Est:N_bound}, implies that $N(A^{-1}x)>0$ at every time $t\in [0,T]$ and for all $x\in \Sigma$.
Moreover, the Lipschitz continuity of $(S,u,t)\mapsto \lambda(S,N(u),F(t))u \in \mathrm{dom}(-D\Delta)$ and of $(S,u,t)\mapsto F(t) - c(S,N(u),F(t))N(u) \in \mathbb{R}$ in a neighbourhood of $(S_{in},u_{in},0)$ in $\mathbb{R}\times \mathrm{dom}(-D\Delta) \times [0,T]$, yields (for sufficiently small $T$), that $\Phi$ maps the closed set $\Sigma$ into itself and that $\Phi$ is a contraction.
	
Therefore, Banach's fixed point theorem yields a unique fixed point $(S,x)\in \Sigma$ of $\Phi$.
Note that since $S_{in}\ge0$, the quasi-positivity property of 
\begin{equation*}
	(F - N\,c(S,N,F))_{-} = l(S,N,F)\,S(t)
\end{equation*}
and the local Lipschitz continuity of $(S,u,t)\rightarrow F(t) - N(u(t))\,c(S(t),N(u(t)),F(t))$
ensure that $S\geq 0$ holds on $[0,T]$ independently of $T$.

%implies that $F - N\,c(S,N,F)<0$ is only possible if $S(t)>0$ for some $t>0$.
%In such cases, the local Lipschitz continuity of $(S,u,t)\rightarrow F(t) - N(u(t))\,c(S(t),N(u(t)),F(t))$ implies non-negativity of $S$ at least for a small time interval.
%	Since the $S$-component of the fixed point of $\Phi$ can only get negative if it attains the value zero, but 
%	\begin{equation*}
%	F(t) - c(S(t),N(u(t)),F(t))N(u(t))\geq 0
%	\end{equation*}
%	in this case, $S<0$ is not possible at all.
%	So whenever $(S,x)$ is a , there holds $S\geq 0$ on $[0,T]$ independently of $T$.
	
	The function $t\mapsto \lambda(S(t),N(A^{-1}x(t)),F(t))A^{-1}x(t)$ is contained in $\mathrm{C}([0,T];\mathrm{L}^2(\Omega))$.
With some additional work (see e.g. \cite{pazy}) one can show that $t\mapsto  x(t) \in \mathrm{L}^2(\Omega)$ is also 
locally Hölder continuous for $t\in (0,T]$. Hence, also $S$ is locally Lipschitz continuous
and so is the function $t\mapsto \lambda(S(t),N(A^{-1}x(t)),F(t))A^{-1}x(t)\in\mathrm{L}^2(\Omega)$ for $t\in (0,T]$.
	This implies that the linear inhomogeneous problem
	\begin{alignat*}{2}
	\dot{v} +A v &= \lambda(S,N(A^{-1}x),F(\tau))A^{-1}x,\\ 
	v(0) &= u_{in},
	\end{alignat*}
	has a unique solution $v\in \mathrm{C}([0,T];\mathrm{L}^2(\Omega))\cap \mathrm{C}^1((0,T];\mathrm{L}^2(\Omega))$, given by 
	\begin{align*}
	v(t)= e^{t(-A)}u_{in} + \int_0^t e^{(t-\tau)(-A)}\lambda(S,N(A^{-1}x),F(\tau))A^{-1}x d\tau.
	\end{align*}
	Applying $A$ to this equation shows that $v=A^{-1}x$, which implies that the function $v\in \mathrm{C}([0,T];\mathrm{dom}(-D\Delta))\cap \mathrm{C}^1((0,T];\mathrm{L}^2(\Omega))$ solves \eqref{pop_dnymics_pop_evol1}-\eqref{pop_dnymics_pop_evol3}. Since $v$ is unique, this proves that
\[
	(S,u):= (S,v)\in \mathrm{C}([0,T];\mathbb{R}\times \mathrm{dom}(-D\Delta))\cap \mathrm{C}^1((0,T];\mathbb{R}\times \mathrm{L}^2(\Omega))
\]
is the unique local solution of \eqref{pop_dnymics_pop_evol1}-\eqref{pop_dnymics_stock_evol2}. Moreover, $S\geq 0$.
		
We are left to prove that the solution $u$ of \eqref{pop_dnymics_pop_evol1}-\eqref{pop_dnymics_pop_evol3} satisfies $u(t,x)\geq 0$ for all $t\in [0,T]$ and all $x\in \overline{\Omega}$.
Note first that
\[
	|\lambda(S(t),N(t),F(t))| \leq C
\]
uniformly in $t\in [0,T]$ for some $C>0$.
%	Also, from \eqref{pop_dnymics_growth_pop_c_2} we know
%	\[
%	\left| \frac{c_2(S(t),N(t),F(t))}{c_{\min}}-1 \right|\leq C_2(T).
%	\]
%	
%	We call $C:=\max\{C(\max_{t\in [0,T]}S(t)),C_2(T)\}$.
Let $\mu< -C$ be chosen arbitrary and introduce the auxiliary function $\tilde{u}= ue^{\mu t}$. This function solves the evolution equation
\begin{equation}\label{uu}
\begin{cases}
\partial_t \tilde{u}(t,x) -D\Delta \tilde{u}(t,x)  = \left(\lambda(S,N,F) + \mu\right)\tilde{u}(t,x) \le 0\ & \text{a.e. in } (0,T)\times \Omega,\\
\partial_\nu \tilde{u}(t,x) = 0 & \text{a.e. in } (0,T)\times\partial\Omega,
\end{cases}
\end{equation}
subject to non-negative initial data $\tilde{u}_{in}\ge0$. Moreover, 
$|\lambda(S(t),N(t),F(t))| + \mu\le C+\mu<0$ uniformly in $[0,T]$.
Hence, by using weak parabolic maximum principle arguments (see e.g. \cite{Chi}), we test 
\eqref{uu} with $\tilde{u}_{-} = \min\{0,\tilde{u}\}$ and obtain  
after integration by parts
\begin{align*}
\frac{d}{dt}\int_{\Omega} \frac{(\tilde{u}_{-})^2}{2} \,dx &\le - D \int_{\Omega} |\nabla (\tilde{u}_{-})|^2 \,dx
+ \left( |\lambda|+\mu \right) \int_{\Omega} (\tilde{u}_{-})^2 \,dx 
\le 0.
\end{align*} 
Hence $(\tilde{u}_{in})_{-}=0$ implies that $\tilde{u}_{-}=0$ a.e. on $\Omega$ for all $t>0$. 
Since $u\in \mathrm{C}([0,T]\times \overline{\Omega})$, this yields
\[
	u(t,x)\geq 0, \qquad \text{for all }t\in [0,T] \text{ and } x\in \overline{\Omega}.
\]

\medskip
	
\noindent\underline{(II) Higher regularity and strong solutions:}
	
Since 
$$
(S,u,t)\mapsto \left(\begin{matrix}
\frac{1}{\varepsilon}(F(t) - N(u)c(S,N(u),F(t))\\
\lambda(S,N(u),F(t))u
\end{matrix}\right)
$$ is locally Lipschitz continuous into $\mathbb{R}\times\mathrm{L}^2(\Omega)$ on a neighbourhood of $(S_{in},u_{in})$ in $\mathbb{R}^+\times X^\alpha$ for any $0<\alpha <1$ and for $t>0$, it follows by classical arguments that $t\mapsto \frac{d}{dt} u(t)$ is in fact locally Hölder continuous into $X^\gamma$ for any $0<\gamma<1$ and $t>0$. As a consequence (see e.g. 	\cite[Theorem 3.5.2]{henry}), it follows
\[
	(S,u)\in \mathrm{C}([0,T];\mathbb{R}\times \mathrm{H}^{2}(\Omega)) \cap \mathrm{C}^1((0,T];\mathbb{R}\times \mathrm{C}^{0,\beta}(\overline{\Omega}))
\]
	
%It follows that we may identify $(S,u)$ with
%	\[
%	(S,u)\in \mathrm{C}([0,T];\mathbb{R}\times \mathrm{H}^{2}(\Omega)).
%	\]

Note that the derivative $\dot{S}$ for $t>0$ is given by 
\[
	\dot{S}(t)=\frac{1}{\varepsilon} (F(t) - N(u(t))\,c(S(t),N(u(t)),F(t))),
\]
and that the right hand side is continuous and bounded also for $t\rightarrow 0$.
Hence $S\in \mathrm{C}^1([0,T])$.
\medskip
	
\noindent\underline{(III) Global existence and lower bound for $N(u)$:}

Global existence of solutions follows from the fact that the nonlinear functions $t\mapsto \lambda(S(t),N(u(t)),F(t))u(t)$ and $t\mapsto F(t) - N(u(t))c(S(t),N(u(t),F(t))$ satisfy at most linear growth estimates along solutions $(S,u)$ of \eqref{pop_dnymics_pop_evol1}-\eqref{pop_dnymics_stock_evol2}, see e.g. \cite[Corollary 3.3.5]{henry}.

 Moreover, we show the existence of a constant $\delta(T) >0$ such that $N(u(t))>\delta(T)$ for all $t\in [0,T]$.	
%	Because  the local solutions can be continued to any interval $[0,T]$ as long as $N(u)>0$.
	%This is true for the first case since the constant in \eqref{pop_dnymics_growth_pop} grows only linearly in $S$.
Let $T>0$ be arbitrary and note that	
$N:=N(u)$ and $S$ solve the system
\begin{alignat*}{2}
	\dot{N} &= \lambda(S,N,F) N,\qquad &&N(0) = N_{in}>0,\\
	\varepsilon \dot{S} &= F - Nc(S,N,F),\qquad
	&&S(0)=S_{in}\geq 0.
\end{alignat*}
Since the function $t\mapsto \lambda(S(t),N(t),F(t))=: g(t)$ is continuous, the solution $N$ can be written as 
%
%and $N$ solves the problem
%	\begin{alignat*}{2}
%	\dot{\tilde{N}} &= g \tilde{N},\\
%	\tilde{N}(0) &= N_{in}.
%	\end{alignat*}
%	The solution $\tilde{N}$ of the latter problem is explicitly given by
\[
{N}(t)= N_{in} \exp\left(\int_0^t g(s) \,ds \right).
\] 
This allows to estimate
\[
N(t)\geq N_{in} \exp(-T \|g\|_{C([0,T])})=: \delta(T)>0.
\]
\medskip	
%	That is, also the bound in \eqref{pop_dnymics_growth_pop_c_2} is given and we obtain global solutions of \eqref{pop_dnymics_pop_evol1}-\eqref{pop_dnymics_stock_evol2} in both cases.

\noindent\underline{(IV) Further regularity and classical solutions:}

For $t\in (0,T)$, we calculate
\begin{align*}
\frac{d}{dt} u(t) %&= -A e^{t(-A)}u_{in} - A \int_0^t e^{(t-\tau)(-A)}\lambda(S,N(u),F)u\, d\tau + \lambda(S,N(u),F)u\\
	&= -e^{t(-A)} A u_{in} - \int_0^t e^{(t-\tau)(-A)}\lambda(S,N(u),F)Au\, d\tau + \lambda(S(t),N(u(t)),F(t)u(t),
\end{align*}
and all functions on the right side are contained in $\mathrm{C}([0,T];\mathrm{L}^2(\Omega))$.
Consequently, $\partial_t u$ is uniformly bounded in $\mathrm{L}^2(\Omega)$, i.e.
$u\in \mathrm{C}^1([0,T];\mathrm{L}^2(\Omega))$.
Moreover, we recall that $t\mapsto \lambda(S(t),N(u(t)),F(t))$ is continuous and $\partial_t u(t),u(t)\in \mathrm{C}^{0,\beta}(\overline{\Omega})$ for all $t\in (0,T]$ from Step~II. 
Hence, for any fixed $t>0$, we define $h(x):=-\partial_t u(t,x) + \lambda(S(t),N(t),F(t))\, u(t,x)$, and $u(t,.)$ solves 
 the equation
\begin{equation}\label{Pop_dynamics_Laplace_eq1}
\begin{cases}
-D\Delta z(x) = h(x)&\quad \text{for } x\in \Omega\\
\partial_\nu z = 0 &\quad \text{for } x\in \partial\Omega. %\label{Pop_dynamics_Laplace_eq2}
\end{cases}
\end{equation}	
Moreover, $h\in \mathrm{C}^{0,\beta}(\overline{\Omega})$ satisfies the solvability condition $\int_{\Omega} h dx =0$ since $u$ solves \eqref{pop_dnymics_pop_evol1}-\eqref{pop_dnymics_pop_evol3}. Thus, 
by \cite[Theorem 3.1]{nardi2013schauder}, problem 
\eqref{Pop_dynamics_Laplace_eq1}
has a unique, normalised solution 
\[
	z\in \mathcal{C}:=\left\{ u\in \mathrm{C}^{2,\beta}(\overline{\Omega}): \int_\Omega u(x) dx={N(u)} = 0 \right\}.
\]	
%We denote by $u_\Omega:= \frac{1}{\Omega}\int_\Omega u(x) dx=\frac{1}{\Omega}N(u)$ the mean value of $u$.
Moreover, \cite[Theorem 4.1]{nardi2013schauder} yields for a constant $C=C(\Omega,\beta,d)>0$
\[
	\|z \|_{\mathrm{C}^{2,\beta}(\overline{\Omega})} \leq C \|h\|_{\mathrm{C}^{0,\beta}(\overline{\Omega})}.
\]
Because $u(t,.)$ solves \eqref{Pop_dynamics_Laplace_eq1}, the uniqueness of the normalised solution $z\in \mathcal{C}$ implies
\[
z= u(t,.) - \frac{1}{|\Omega|} N(u(t)).
\]
Therefore, the function $u(t,.)$ is contained in $\mathrm{C}^{2,\beta}(\overline{\Omega})$ with
\begin{align*}
\biggl\|u(t,.)- &\frac{N(u(t,.))}{|\Omega|}  \biggr\|_{\mathrm{C}^{2,\beta}(\overline{\Omega})}
\leq C \|h\|_{\mathrm{C}^{0,\beta}(\overline{\Omega})}\\	&= C \left\|-\partial_t u(t,.) + \lambda(S(t),N(t),F(t)) u(t,.)\right\|_{\mathrm{C}^{0,\beta}(\overline{\Omega})}\\
	&\leq C\|-\partial_t u(t,.) \|_{\mathrm{C}^{0,\beta}(\overline{\Omega})} + C\left|\lambda(S(t),N(t),F(t))\right| \|u(t,.)\|_{\mathrm{C}^{0,\beta}(\overline{\Omega})}<\infty.
	\end{align*}
Since the right side is uniformly bound for all $0<t_0\le t \le T$, we conclude that $u\in \mathrm{L}^\infty((t_0,T);\mathrm{C}^{2,\beta}(\overline{\Omega}))$ for any $t_0>0$.
Finally, $u\in \mathrm{C}((0,T];\mathrm{C}^{2,\beta}(\overline{\Omega}))$ follows from a similar estimate and the observations
that $t\mapsto \lambda(S(t),N(t),F(t)) u(t)\in \mathrm{C}([0,T];\mathrm{C}^{0,\beta}(\overline{\Omega}))$, 
$t\mapsto\frac{d}{dt}u(t)\in \mathrm{C}((0,T];\mathrm{C}^{0,\beta}(\overline{\Omega}))$ and
$t\mapsto N(u(t))\in \mathrm{C}([0,T])$.
\end{proof}

\noindent{\bf Aknowledgements.}\hfill

This work is supported by the International Research Training Group IGDK 1754 and NAWI Graz.

\newcommand{\etalchar}[1]{$^{#1}$}

\end{document}